\documentclass[11pt]{article}
\usepackage{amsthm, amsmath, amssymb, amsfonts, url, booktabs, tikz, setspace}
\usepackage[margin = 1in]{geometry}



\usepackage{hyperref}

\hypersetup{
    pdfstartview={FitH}    
}


\newtheorem*{claim*}{Claim}
\newtheorem*{fact*}{Fact}
\newtheorem{theorem}{Theorem}[section]
\newtheorem*{theorem*}{Theorem}
\newtheorem{proposition}[theorem]{Proposition}
\newtheorem{lemma}[theorem]{Lemma}
\newtheorem*{lemma*}{Lemma}
\newtheorem{corollary}[theorem]{Corollary}
\newtheorem{conjecture}[theorem]{Conjecture}

\theoremstyle{definition}
\newtheorem{definition}[theorem]{Definition}
\newtheorem{example}[theorem]{Example}

\newtheorem*{question}{Question}

\theoremstyle{remark}
\newtheorem*{remark}{Remark}


\def\b{\mathbf}

\def\c{\mathcal}

\def\RR{\mathbb{R}}

\def\ZZ{\mathbb{Z}}

\def\bx{\mathbf{x}}


\DeclareMathOperator{\Hom}{Hom}
\DeclareMathOperator{\vol}{vol}
\DeclareMathOperator{\conv}{conv}
\DeclareMathOperator{\swap}{swap}
\DeclareMathOperator{\viol}{viol}
\DeclareMathOperator{\pair}{pair}

\newcommand{\bsp}{\mathrm{bsp}}
\newcommand{\bst}{\mathrm{bst}}
\newcommand{\surj}{\mathrm{surj}}


\newcommand{\abs}[1]{\left\lvert#1\right\rvert}

\newcommand{\set}[1]{\left\{ #1 \right\}}
\def\({\left(}
\def\){\right)}

\newcommand{\STAB}{\mathsf{STAB}}

\newcommand{\ESTAB}{\mathsf{ESTAB}}


\def\longto{\longrightarrow}

\def\x{\times}

\def\<={\Leftarrow}
\def\=>{\Rightarrow}

\tikzstyle{P} = [draw, circle, black, fill, inner sep = 0pt, minimum width = 3pt]
\tikzstyle{t} = [ultra thick]
\tikzstyle{every loop} = []
\tikzstyle{sf} = [font={\small}]

\title{The Bipartite Swapping Trick on Graph Homomorphisms}
\author{Yufei Zhao \\
\small Massachusetts Institute of Technology\\
\small \url{yufei.zhao@gmail.com}}


\begin{document}

%

\maketitle

\begin{abstract}
We provide an upper bound to the number of graph homomorphisms from $G$ to $H$, where $H$ is a fixed graph with certain properties, and $G$ varies over all $N$-vertex, $d$-regular graphs. This result generalizes a recently resolved conjecture of Alon and Kahn on the number of independent sets. We build on the work of Galvin and Tetali, who studied the number of graph homomorphisms from $G$ to $H$ when $H$ is bipartite. We also apply our techniques to graph colorings and stable set polytopes.
\end{abstract}

\section{Introduction} \label{sec:intro}

\subsection{From independent sets to graph homomorphisms} \label{sec:intro-indep}

Let $G = (V,E)$ be a (simple, finite, undirected) graph. An \emph{independent set} (or a \emph{stable set}) is a subset of the vertices with no two adjacent. Let $i(G)$ denote the number of independent sets of $G$. The following question is motivated by applications in combinatorial group theory \cite{Alon91, Zhao:finite} and statistical mechanics \cite{Kahn}.

\begin{question}
In the family of $N$-vertex, $d$-regular graphs $G$, what is the maximum value of $i(G)$?
\end{question}

Alon \cite{Alon91} first conjectured in 1991 that, when $N$ is divisible by $2d$, the maximum should be achieved when $G$ is a disjoint union of complete bipartite graphs $K_{d,d}$. In 2001, Kahn \cite{Kahn} proved Alon's conjecture in the case when $G$ is a bipartite graph. Zhao \cite{Zhao:indep} recently proved the conjecture in general. Theorem \ref{thm:kahn-zhao} contains a precise statement of the result. See \cite{GalDM} or \cite{Zhao:indep} for a history of the problem.

\begin{theorem}[Zhao \cite{Zhao:indep}] \label{thm:kahn-zhao}
For any $N$-vertex, $d$-regular graph $G$,
\[
	i(G) \leq i\(K_{d,d}\)^{N/(2d)} = \(2^{d+1} - 1 \)^{N/(2d)}.
\]
Note that there is equality when $G$ is a disjoint union of $K_{d,d}$'s.
\end{theorem}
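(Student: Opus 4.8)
The plan is to bootstrap the general bound from Kahn's theorem \cite{Kahn} for bipartite $d$-regular graphs by passing to the bipartite double cover $G \times K_2$: the graph on $V(G)\times\{0,1\}$ in which $(u,0)$ is adjacent to $(v,1)$ precisely when $uv\in E(G)$. When $G$ is $N$-vertex and $d$-regular, $G\times K_2$ is $2N$-vertex, $d$-regular, and bipartite (with sides $V(G)\times\{0\}$ and $V(G)\times\{1\}$), so Kahn's bound applies and gives $i(G\times K_2)\le i(K_{d,d})^{2N/(2d)} = i(K_{d,d})^{N/d}$. Everything then reduces to the combinatorial inequality $i(G)^2 \le i(G\times K_2)$, Zhao's \emph{bipartite swapping trick} \cite{Zhao:indep}: combining it with Kahn's bound and taking square roots gives $i(G)\le i(K_{d,d})^{N/(2d)}$, while a one-line count --- an independent set of $K_{d,d}$ must lie entirely within one of the two parts --- gives $i(K_{d,d}) = 2\cdot 2^d - 1 = 2^{d+1}-1$.

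To prove the swapping inequality I would construct an injection from ordered pairs $(I,J)$ of independent sets of $G$ into independent sets of $G \times K_2$, where the latter are identified with the pairs $(A,B)$ of subsets of $V(G)$ having no edge of $G$ between $A$ and $B$ (put $A=\{v:(v,0)\in S\}$ and $B=\{v:(v,1)\in S\}$). Fix a linear order on $V(G)$. Given independent $I$ and $J$, every edge of $G$ with both endpoints in $I\cup J$ joins $I\setminus J$ to $J\setminus I$, since an edge inside $I$, inside $J$, or touching $I\cap J$ is forbidden by independence; hence the induced subgraph $F:=G[I\triangle J]$ is bipartite with parts $I\setminus J$ and $J\setminus I$, this bipartition is the unique one on each connected component, and $I\cap J$ has no $G$-neighbour in $I\triangle J$. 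Send $(I,J)$ to the pair $(A,B)$ where $A$ consists of $I\cap J$ together with all vertices lying in components $C$ of $F$ with $\min C\in I\setminus J$, and $B$ consists of $I\cap J$ together with all vertices lying in components $C$ with $\min C\in J\setminus I$. An edge of $G$ between $A$ and $B$ would have both ends in $I\cup J$ and neither in $I\cap J$, so it would lie in $F$ and hence inside a single component $C$ --- but $C$ was placed entirely into $A$ or entirely into $B$, a contradiction. Thus $(A,B)$ is a legitimate independent set of $G\times K_2$.

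The part I expect to be the real obstacle is invertibility. From $(A,B)$ one recovers $I\cap J = A\cap B$ and $I\triangle J = A\triangle B$, hence $F$ and its components. Each component $C$ sits entirely in $A\setminus B$ or entirely in $B\setminus A$, which records whether $\min C\in I\setminus J$ or $\min C\in J\setminus I$; since $C$ is connected its bipartition is unique, so knowing the side containing $\min C$ determines $C\cap(I\setminus J)$ and $C\cap(J\setminus I)$ exactly. Reassembling these over all components recovers $I\setminus J$ and $J\setminus I$, hence $I$ and $J$, so the map is injective. Granting this, the chain $i(G)^2 \le i(G\times K_2) \le i(K_{d,d})^{N/d}$ finishes the proof, with equality for disjoint unions of $K_{d,d}$'s as stated.
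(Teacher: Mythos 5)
Your proof is correct and follows essentially the same strategy as the paper: reduce to the bipartite case via the double cover $G \times K_2$ and Kahn's bound, with the key combinatorial ingredient being an injection from pairs of independent sets of $G$ into independent sets of $G \times K_2$ obtained by swapping along the (bipartite) subgraph of violated edges $G[I \triangle J]$. The only cosmetic difference is your explicit recipe for the swap---assigning each component $C$ of $G[I \triangle J]$ wholly to $A$ or to $B$ according to the side of its bipartition containing $\min C$---whereas the paper phrases this as a ``lexicographically first set that works''; both are instances of the general Proposition~\ref{prop:bst}, since your swap set is determined by the violated-edge subgraph alone.
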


This result gives a tight upper bound to the quantity $i(G)^{1/\abs{V(G)}}$ ranged over all $d$-regular graphs $G$. This quantity can be viewed as the number of independent sets normalized by the size of the graph.

In this paper we extend Theorem \ref{thm:kahn-zhao} to give several new results on graph homomorphisms, graph colorings, and stable set polytopes.

For graphs $G$ and $H$ (allowing loops for $H$), a \emph{graph homomorphism} is a essentially a map from the vertices of $G$ to the vertices of $H$ that carries each edge of $G$ to some edge of $H$. More precisely, the set of graph homomorphisms from $G$ to $H$ is given by
\[
	\Hom(G, H) = \{f : V(G) \to V(H) :  f(u)f(v) \in E(H) \; \forall uv \in E(G) \},
\]
($vv$ means a loop at $v$) and let 
\[
	\hom(G, H) = \abs{\Hom(G, H)}.
\]
Graph homomorphisms generalize the notion of independent sets. Indeed if we take $H$ to be the graph with vertices $\{0, 1\}$ and edges $\{00, 01\}$ (see Figure~\ref{fig:H1}), then $\Hom(G, H)$ is in bijection with the collection of independent sets of $G$. Indeed, for each homomorphism from $G$ to $H$, the subset of vertices of $G$ that map to $1 \in V(H)$ forms an independent set. Thus $i(G) = \hom(G, H)$. 

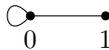
\begin{figure}[!ht]
\centering
\begin{tikzpicture}[sf]
	\draw (1,0) node[P, label=below:$1$] {} -- (0,0) node[P, label=below:$0$] {} edge[-,in = 135, out = 225, loop] ();
\end{tikzpicture}
\caption{A homomorphism from any graph $G$ into the above graph corresponds to an independent set of $G$.\label{fig:H1}}
\end{figure}

In addition, graph homomorphisms generalize proper vertex-colorings. Take $H = K_q$, the complete graph on $q$ vertices. Viewing each vertex of $K_q$ as a color, we see that a homomorphism in $\Hom(G, K_q)$ corresponds to an assignment of each vertex of $G$ to one of $q$ colors so that no two adjacent vertices are assigned the same color. Thus $\hom(G, K_q)$ equals to the number of proper $q$-colorings of $G$.

Since graph homomorphisms generalize independent sets, it is natural to ask whether Theorem \ref{thm:kahn-zhao} can be generalized to graph homomorphisms. Indeed, the following result of Galvin and Tetali \cite{GT} generalizes Theorem \ref{thm:kahn-zhao} in the bipartite case.

\begin{theorem}[Galvin-Tetali \cite{GT}] \label{thm:GT}
For any $N$-vertex, $d$-regular bipartite graph $G$, and any $H$ (possibly with loops), we have
\begin{equation} \label{eq:GT1}
	\hom(G, H) \leq \hom(K_{d,d}, H)^{N/(2d)}.
\end{equation}
\end{theorem}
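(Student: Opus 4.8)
The natural plan is to adapt Kahn's entropy argument \cite{Kahn} from independent sets to general targets, the only genuinely new ingredient being a local estimate that replaces the count $2^{d+1}-1$ by $\hom(K_{d,d},H)$. Throughout I write $\mathbb{H}(\cdot)$ for Shannon entropy. Since $G$ is $d$-regular bipartite, its parts $A$ and $B$ satisfy $|A|=|B|=N/2$; let $\mathbf{f}$ be uniform in $\Hom(G,H)$, so that $\log\hom(G,H)=\mathbb{H}(\mathbf{f})$, and for $S\subseteq V(G)$ abbreviate $\mathbf{f}_S=(\mathbf{f}_u)_{u\in S}$. First I would split by the chain rule, $\mathbb{H}(\mathbf{f})=\mathbb{H}(\mathbf{f}_A)+\mathbb{H}(\mathbf{f}_B\mid\mathbf{f}_A)$; since every edge of $G$ runs between $A$ and $B$ and $N(v)\subseteq A$ for each $v\in B$, subadditivity of conditional entropy together with the fact that extra conditioning cannot increase entropy gives $\mathbb{H}(\mathbf{f}_B\mid\mathbf{f}_A)\le\sum_{v\in B}\mathbb{H}(\mathbf{f}_v\mid\mathbf{f}_{N(v)})$. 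For the other term, the neighborhoods $\{N(v):v\in B\}$ cover $A$ with every vertex covered exactly $d$ times (this is where $d$-regularity enters), so Shearer's entropy inequality yields $\mathbb{H}(\mathbf{f}_A)\le\frac{1}{d}\sum_{v\in B}\mathbb{H}(\mathbf{f}_{N(v)})$. Combining, $\mathbb{H}(\mathbf{f})\le\sum_{v\in B}\bigl(\tfrac1d\mathbb{H}(\mathbf{f}_{N(v)})+\mathbb{H}(\mathbf{f}_v\mid\mathbf{f}_{N(v)})\bigr)$.

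The heart of the matter is then a purely local claim: for each $v\in B$, writing $X=\mathbf{f}_{N(v)}\in V(H)^d$ with distribution $q$ and, for $x\in V(H)^d$, letting $\mathcal{N}(x)\subseteq V(H)$ be the set of common neighbors in $H$ of the coordinates of $x$, one has $\tfrac1d\mathbb{H}(X)+\mathbb{H}(\mathbf{f}_v\mid X)\le\tfrac1d\log\hom(K_{d,d},H)$. I would prove this by noting that conditioned on $X=x$ for any $x$ with $q(x)>0$ the variable $\mathbf{f}_v$ takes values in $\mathcal{N}(x)$, which is therefore nonempty and gives $\mathbb{H}(\mathbf{f}_v\mid X=x)\le\log|\mathcal{N}(x)|$; hence the left side is at most $\tfrac1d\sum_x q(x)\log\bigl(|\mathcal{N}(x)|^d/q(x)\bigr)$, which by concavity of $\log$ (Jensen's inequality) is at most $\tfrac1d\log\sum_x|\mathcal{N}(x)|^d$. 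Finally $\sum_{x\in V(H)^d}|\mathcal{N}(x)|^d=\hom(K_{d,d},H)$, since a homomorphism $K_{d,d}\to H$ is obtained by choosing the images $x$ of one side arbitrarily in $V(H)^d$ and then each vertex of the other side independently within $\mathcal{N}(x)$. Summing the local bound over the $|B|=N/2$ vertices then yields $\mathbb{H}(\mathbf{f})\le\frac{N}{2d}\log\hom(K_{d,d},H)$, and exponentiating gives the claimed bound \eqref{eq:GT1}.

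The entropy bookkeeping in the first step is essentially routine — it is the skeleton of Kahn's proof, now run with a function-valued rather than Boolean-valued $\mathbf{f}$. The real obstacle, and the step I expect to spend the most care on, is finding the correct local inequality: one has to recognize that the right quantity to compare against is $\sum_x|\mathcal{N}(x)|^d$, that this equals $\hom(K_{d,d},H)$, and that the exponent $d$ — forced by the regularity of $G$ — lines up precisely so that a single application of Jensen closes the argument with no slack. The global structure of $G$ plays no further role: only $d$-regularity (via Shearer) and bipartiteness (via the clean $A\mid B$ chain-rule split) are used, which also indicates why a genuinely new idea will be needed once $G$ is allowed to be non-bipartite.
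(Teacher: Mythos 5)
The paper states Theorem~\ref{thm:GT} as a black box with a citation to \cite{GT} and does not reprove it, so there is no in-paper argument to compare against. Your proof is correct and is essentially the original Galvin--Tetali argument: it is Kahn's entropy scheme (chain rule on the $A$/$B$ bipartition, Shearer with the $d$-fold cover $\{N(v):v\in B\}$ of $A$, then conditioning down to $\mathbf{f}_{N(v)}$) combined with the local Jensen step that identifies $\sum_{x\in V(H)^d}|\mathcal{N}(x)|^d=\hom(K_{d,d},H)$ --- the same inequality you isolate is the heart of their proof as well.
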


Note that in contrast to Theorem~\ref{thm:kahn-zhao}, Theorem \ref{thm:GT} requires $G$ to be bipartite. It was conjectured in \cite{GT} that the bipartite condition in Theorem \ref{thm:GT} can be dropped. Unfortunately, this is false for $G = K_3$ and $H$ a graph of two disconnected loops (see Example \ref{ex:GTfail}). We would like to know which graphs $H$ satisfy \eqref{eq:GT1} for all $G$, as it would allow us to address the following question and thereby to generalize Theorem \ref{thm:kahn-zhao} to other instances of graph homomorphisms.

\begin{question}
Let $H$ be a fixed graph (allowing loops). In the family of $N$-vertex, $d$-regular graphs $G$, what is the maximum value of $\hom(G, H)$?
\end{question}

\subsection{Motivation of technique.} \label{sec:intro-motivation}
The proof of Theorem \ref{thm:kahn-zhao} consists of two main steps. The first step, given by Kahn \cite{Kahn}, used entropy methods to prove the theorem when $G$ is bipartite. The second step, given by Zhao~\cite{Zhao:indep}, reduces the general case to the bipartite case through a combinatorial argument. The first step has already been generalized to graph homomorphisms by Galvin and Tetali, resulting in Theorem~\ref{thm:GT}. In this paper we generalize the second step to graph homomorphisms. Since we will be building on the ideas used in the proof of the independent set problem, it will be helpful to recall the argument, as we shall do now.

Let $G \sqcup G$ denote two disjoint copies of $G$, with vertices labeled $v_i$ for $v \in V(G)$ and $i \in \{0,1\}$. Let $G \x K_2$ denote the bipartite graph with vertices also labeled $v_i$ for $v \in V(G)$ and $i \in \{0, 1\}$, but with edges $u_0v_1$ for $uv \in E$. The key step in \cite{Zhao:indep} was to show that $i(G)^2 \leq i(G \x K_2)$. We know that $i(G \x K_2) \leq i(K_{d,d})^{N/d}$ from the bipartite case, so it follows that $i(G)^2 \leq i(G \x K_2) \leq i(K_{d,d})^{N/d}$ and hence $i(G) \leq i(K_{d,d})^{N/(2d)}$.

Note that $i(G)^2 = i(G \sqcup G)$. The proof of the inequality $i(G)^2 \leq i(G \x K_2)$ involves constructing an injection from the collection of independent sets of $G \sqcup G$ to that of $G \x K_2$. A snapshot of this construction is illustrated in Figure~\ref{fig:indep-swap}. We start from an independent set of $G \sqcup G$ (the black vertices in the figure). After ``crossing'' the edges to transform $G \sqcup G$ into $G \x K_2$, we get a subset of the vertices of $G \x K_2$ (middle figure) which might not be an independent set in $G \x K_2$. However, it turns out that we can always ``swap'' a number of pairs of vertices (each pair is shown in a dashed circle) so that the resulting subset of vertices is an independent set in $G \x K_2$. It takes a bit of thought to see that such swapping is always possible. It is true  because the set of ``bad'' edges in $G$, corresponding to those edges in $G \x K_2$ whose both endpoints are selected, form a bipartite subgraph of $G$. Once we specify a uniform way of choosing of the set of vertices to swap---one recipe is to always choose the lexicographically first subset of $V(G)$ that ``works''---we will have a method of transforming an independent set of $G \sqcup G$ into an independent set of $G \x K_2$. This map is injective as long as there is a way of recovering the set of swapped pairs of vertices---if we had chosen the lexicographically first subset of vertices to swap, then we can recover our choice by choosing the lexicographically first subset of $V(G)$ whose swapping gives an independent set of $G \sqcup G$ after ``uncrossing'' the edges of $G \x K_2$ to get $G \sqcup G$. This completes the proof that $i(G)^2 = i(G \sqcup G) \leq i(G \x K_2)$.

\begin{figure}[!ht]
\centering
\tikzstyle{B}=[draw,circle, fill=black, minimum size=4pt,inner sep=0pt]
\tikzstyle{W}=[draw,circle, fill=white, minimum size=4pt,inner sep=0pt]

\begin{tikzpicture}[scale=.8, sf]
\begin{scope}[shift={(0,0)}]
	\begin{scope}[shift={(-.1,.2)}]
		\node[W] (a) at (0,0) {};
		\node[B] (b) at (3,0) {};
		\node[W] (c) at (1,1) {};
		\node[W] (d) at (2,1) {};
		\node[B] (e) at (0,2) {};
		\node[W] (f) at (3,2) {};
	\end{scope}
	\begin{scope}[shift={(.1,-.2)}]
		\node[W] (a1) at (0,0) {};
		\node[W] (b1) at (3,0) {};
		\node[B] (c1) at (1,1) {};
		\node[W] (d1) at (2,1) {};
		\node[W] (e1) at (0,2) {};
		\node[B] (f1) at (3,2) {};
	\end{scope}	
	\draw (a)--(b)--(d)--(c)--(a)--(e)--(c); \draw (e)--(f)--(b); \draw (d)--(f);	
	\draw (a1)--(b1)--(d1)--(c1)--(a1)--(e1)--(c1); \draw (e1)--(f1)--(b1); \draw (d1)--(f1);
	
	\draw[-latex] (3.5,1)--(4.5,1);
	\node at (1.5,-.8) {$G\sqcup G$};
\end{scope}

\begin{scope}[shift={(5,0)}]
	\begin{scope}[shift={(-.1,.2)}]
		\node[W] (a) at (0,0) {};
		\node[B] (b) at (3,0) {};
		\node[W] (c) at (1,1) {};
		\node[W] (d) at (2,1) {};
		\node[B] (e) at (0,2) {};
		\node[W] (f) at (3,2) {};
	\end{scope}
	\begin{scope}[shift={(.1,-.2)}]
		\node[W] (a1) at (0,0) {};
		\node[W] (b1) at (3,0) {};
		\node[B] (c1) at (1,1) {};
		\node[W] (d1) at (2,1) {};
		\node[W] (e1) at (0,2) {};
		\node[B] (f1) at (3,2) {};
	\end{scope}
	\draw[dashed] (0,2) circle (.8);
	\draw[dashed] (3,0) circle (.8);
	\draw (a)--(b1)--(d)--(c1)--(a)--(e1)--(c); \draw (e1)--(f)--(b1); \draw (d1)--(f);	
	\draw (a1)--(b)--(d1)--(c)--(a1)--(e)--(c1); \draw (e)--(f1)--(b); \draw (d)--(f1);	
	
	\draw[-latex] (3.5,1)--(4.5,1);
	\node at (1.5,-.8) {$G \x K_2$};
\end{scope}

\begin{scope}[shift={(10,0)}]
	\begin{scope}[shift={(-.1,.2)}]
		\node[W] (a) at (0,0) {};
		\node[W] (b) at (3,0) {};
		\node[W] (c) at (1,1) {};
		\node[W] (d) at (2,1) {};
		\node[W] (e) at (0,2) {};
		\node[W] (f) at (3,2) {};
	\end{scope}
	\begin{scope}[shift={(.1,-.2)}]
		\node[W] (a1) at (0,0) {};
		\node[B] (b1) at (3,0) {};
		\node[B] (c1) at (1,1) {};
		\node[W] (d1) at (2,1) {};
		\node[B] (e1) at (0,2) {};
		\node[B] (f1) at (3,2) {};
	\end{scope}
	\draw (a)--(b1)--(d)--(c1)--(a)--(e1)--(c); \draw (e1)--(f)--(b1); \draw (d1)--(f);	
	\draw (a1)--(b)--(d1)--(c)--(a1)--(e)--(c1); \draw (e)--(f1)--(b); \draw (d)--(f1);	
	\node at (1.5,-.8) {$G \x K_2$};
\end{scope}
\end{tikzpicture}
\caption{Transforming an independent set of $G \sqcup G$ to an independent set of $G \x K_2$. \label{fig:indep-swap}}
\end{figure}
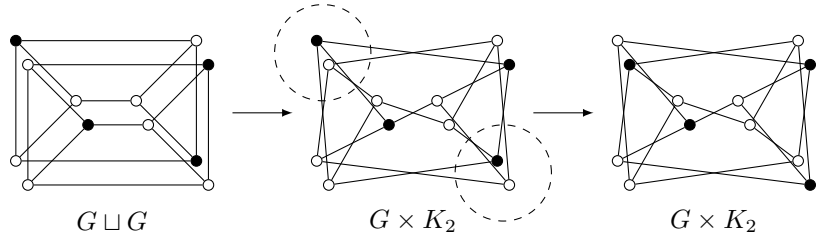

We would like to extend the comparison between $G \sqcup G$ and $G \x K_2$ from independent sets to graph homomorphisms. We introduce the \emph{bipartite swapping trick} (Proposition~\ref{prop:bst}, which is a generalize of the above injection. The bipartite swapping trick gives us a method of corresponding certain elements of $\Hom(G \sqcup G, H)$ with those of $\Hom(G \x K_2, H)$. For instance, when $H$ is a \emph{bipartite swapping target} (Definition~\ref{def:target}), there is an injection from $\Hom(G \sqcup G, H)$ to $\Hom(G \x K_2, H)$, thereby allowing us to extend Theorem \ref{thm:GT} to non-bipartite $G$ in certain cases.

\medskip

\noindent \emph{Outline of paper.} In Section~\ref{sec:summary}, we give a summary of our results and introduce the notion of GT graphs, which characterizes when Theorem~\ref{thm:GT} can be extended to non-bipartite graphs. In Section~\ref{sec:trick} we describe the bipartite swapping trick. In Section~\ref{sec:target} we consider families of graphs where the bipartite swapping trick always succeeds in proving the non-bipartite extension of Theorem~\ref{thm:GT}. In Section~\ref{sec:coloring} we apply the bipartite swapping trick to counting graph colorings. In Section~\ref{sec:polytope} we apply our results to the stable set polytope of a graph. Finally, in Section~\ref{sec:weighted}, we consider weighted generalizations of our results. Although the proofs of the weighted analogs of our results come at almost no extra effort, we choose to defer the discussion until the end in order to simplify the presentation.

\noindent \emph{Notation and convention.}
In this paper, $G$ always denotes the source of a graph homomorphism and $H$ always denotes the target. All graphs are undirected. The graph $G$ is simple. We allow loops for $H$ but not parallel edges or parallel loops. The notations $V(\cdot)$ and $E(\cdot)$ respectively denote the set of vertices and the set of edges of a graph. The function $\Hom( \cdot, \cdot)$ (and its variants) always returns a set while $\hom( \cdot, \cdot)$ always returns a number.

\section{Statement of results} \label{sec:summary}

\subsection{\boldmath{$GT$} graphs} \label{sec:summary-GT}

As motivated in the introduction, we are interested in extending Theorem \ref{thm:GT} to non-bipartite $G$.

\begin{definition} \label{def:GT}
A graph $H$ (not necessarily simple) is \emph{GT} if 
\begin{equation}\label{eq:def:GT}
	\hom(G, H) \leq \hom(K_{d,d}, H)^{N/(2d)}
\end{equation}
holds for every $N$-vertex, $d$-regular graph $G$.
\end{definition}

\begin{example}
The graph $H$ in Figure~\ref{fig:H1} is GT by Theorem \ref{thm:kahn-zhao} since $\hom(G, H) = i(G)$ for every $G$. 
\end{example}

\begin{example} \label{ex:GTfail}
Let $H$ be the graph with two disconnected vertices, each with a loop. Then $H$ is not GT. Indeed, let $G = K_3$. Then $\hom(G, H) = 2 >  2^{3/4} = \hom(K_{2,2}, H)^{3/4}$.
\end{example}

Theorem \ref{thm:GT} implies that \eqref{eq:def:GT} is true for bipartite $G$. As motivated in Section~\ref{sec:intro-motivation}, we would like to reduce the general case to the bipartite case by comparing $G \sqcup G$ and $G \x K_2$.

\begin{definition} \label{def:sGT}
A graph $H$ (not necessarily simple) is \emph{strongly GT} if
\[
	\hom(G \sqcup G, H) \leq \hom(G \x K_2, H)
\]
for every graph $G$ (not necessarily regular).
\end{definition}

The following lemma shows the significance of being strongly GT and also justifies the terminology.

\begin{lemma} \label{lem:sGT-GT}
If $H$ is strongly GT, then it is GT.
\end{lemma}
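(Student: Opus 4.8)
The plan is to prove the contrapositive-free statement directly: assume $H$ is strongly GT and deduce \eqref{eq:def:GT} for an arbitrary $N$-vertex, $d$-regular graph $G$. The key idea, following the outline in Section~\ref{sec:intro-motivation}, is that $G \x K_2$ is a $d$-regular \emph{bipartite} graph on $2N$ vertices, so Theorem~\ref{thm:GT} applies to it, while $\hom(G \sqcup G, H) = \hom(G,H)^2$ because a homomorphism out of a disjoint union is the same data as a pair of homomorphisms out of the components.

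First I would record the identity $\hom(G \sqcup G, H) = \hom(G, H)^2$: a map $V(G \sqcup G) \to V(H)$ is precisely a pair of maps $V(G) \to V(H)$, and the edge condition on $G \sqcup G$ is exactly the conjunction of the edge conditions on the two copies of $G$, so $\Hom(G \sqcup G, H)$ is in natural bijection with $\Hom(G, H) \times \Hom(G, H)$. Next I would invoke the hypothesis that $H$ is strongly GT with this particular $G$, giving
\[
	\hom(G, H)^2 = \hom(G \sqcup G, H) \leq \hom(G \x K_2, H).
\]
Then I would observe that $G \x K_2$ is $d$-regular (each vertex $v_i$ has neighbours $u_{1-i}$ for $u$ ranging over the $d$ neighbours of $v$ in $G$) and bipartite (with parts indexed by $i \in \{0,1\}$), and has $2N$ vertices, so Theorem~\ref{thm:GT} yields $\hom(G \x K_2, H) \leq \hom(K_{d,d}, H)^{(2N)/(2d)} = \hom(K_{d,d}, H)^{N/d}$. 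Chaining these inequalities gives $\hom(G, H)^2 \leq \hom(K_{d,d}, H)^{N/d}$, and taking square roots gives exactly \eqref{eq:def:GT}.

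This argument is essentially a bookkeeping exercise once the right objects are in place, so I do not anticipate a genuine obstacle; the only point requiring a small amount of care is verifying that $G \x K_2$ is indeed $d$-regular and bipartite so that Theorem~\ref{thm:GT} is applicable, and that the exponent $N/(2d)$ for $G$ on $N$ vertices matches the exponent $(2N)/(2d)$ for $G \x K_2$ on $2N$ vertices after the square root. (Note the argument uses the strongly GT hypothesis only for this single graph $G$, and in particular does not need the "not necessarily regular" generality in Definition~\ref{def:sGT}.)
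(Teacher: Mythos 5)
Your proof is correct and matches the paper's proof essentially step for step: establish $\hom(G,H)^2 = \hom(G\sqcup G, H)$, apply the strongly GT hypothesis, observe that $G\times K_2$ is a $2N$-vertex, $d$-regular bipartite graph so Theorem~\ref{thm:GT} applies, and take square roots. No substantive difference from the paper's argument.
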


\begin{proof}
Suppose $H$ is strongly GT. Let $G$ be an $N$-vertex, $d$-regular graph. Note that $G\x K_2$ is a $2N$-vertex, $d$-regular bipartite graph, so we may apply Theorem \ref{thm:GT}. Then $H$ being strongly $GT$ implies that
\[
	\hom(G, H)^2 = \hom(G \sqcup G, H) \leq \hom(G \x K_2, H) \leq \hom(K_{d,d}, H)^{N/d}.
\]
Therefore $\hom(G, H) \leq \hom(K_{d,d}, H)^{N/(2d)}$, and hence $H$ is GT.
\end{proof}

\begin{example}
The argument in Section~\ref{sec:intro-motivation}, originally from \cite{Zhao:indep}, shows that $i(G \sqcup G) \leq i(G \x K_2)$ for all graphs $G$, so the graph $H$ in Figure \ref{fig:H1} is strongly GT.
\end{example}

\begin{remark}
If $G$ is bipartite, then the graphs $G \sqcup G$ and $G \x K_2$ are isomorphic. Indeed, if $V(G) = A \sqcup B$ is a bipartition, then the map $V(G \sqcup G) \to V(G \x K_2)$ sending $v_i$ to $v_i$ if $v \in A$ and $v_{1-i}$ if $v \in B$ gives a graph isomorphism.

If $H$ is bipartite, then $\Hom(G, H) = \emptyset$ unless $G$ is bipartite, so $\hom(G, H)^2 = \hom(G \sqcup G, H) = \hom(G \x K_2, H)$ if $G$ is bipartite and $\hom(G, H) = 0$ otherwise. Thus every bipartite graph is strongly GT in an uninteresting way.

We suspect that there exists graphs which are GT but not strongly GT. Unfortunately, we do not know any examples.
\end{remark}

In this paper, we provide some sufficient conditions for a graph to be GT. Here is a road map for our chain of implications.
\begin{align*}
	&H \text{ is a threshold graph (Definition~\ref{def:H}, Theorem~\ref{thm:4c})}
\\	\overset{\text{Prop.~\ref{prop:4c-bst}}}{\Longrightarrow} \quad & H \text{ is a bipartite swapping target (Definition~\ref{def:target})}
\\	\overset{\text{Cor.~\ref{cor:target-ineq}}}{\Longrightarrow} \quad & H \text{ is strongly GT (Definition~\ref{def:sGT})}
\\	\overset{\text{Lem.~\ref{lem:sGT-GT}}}{\Longrightarrow} \quad & H \text{ is GT (Definition~\ref{def:GT}).}	
\end{align*}
Threshold graphs are graphs with vertices are a multiset of real numbers, and an edge between two vertices whenever their sum does not exceed a certain global threshold. The graph in Figure~\ref{fig:H1} is an example of a threshold graph, so our new result generalizes Theorem~\ref{thm:kahn-zhao}. We also provide weighted generalizations in Section \ref{sec:weighted}.

\subsection{Counting graph colorings} \label{sec:summary-coloring}

The case $H = K_q$ is particularly significant, since $\Hom(G, K_q)$ is in bijection with the set of all proper $q$-colorings of $G$, i.e., ways of coloring the vertices of $G$ using at most $q$ colors so that no two adjacent vertices are assigned the same color. The function
\[
P(G,q) = \hom(G,K_q)
\]
is known as the chromatic polynomial of $G$ (viewed as a function in $q$) and it counts the number of proper $q$-colorings of $G$. The problem of maximizing/minimizing the number of $q$-colorings over various families of graph has been intensely studied, especially the family of graphs with a fixed number of vertices and edges. See the introduction of \cite{LPS} for an overview of the state of this problem. Here we are interested in maximizing the number of $q$-colorings in the family of $N$-vertex, $d$ regular graphs.

\begin{conjecture} \label{conj:coloring}
For $q \geq 3$, the complete graph $K_q$ is GT. Equivalently, for any $N$-vertex, $d$-regular graph $G$, the chromatic polynomial satisfies
\begin{equation} \label{eq:coloring-ineq}
	P(G, q) \leq P( K_{d,d}, q)^{N/(2d)},
\end{equation}
Note that we have equality when $G$ is sa disjoint union of $K_{d,d}$'s.
\end{conjecture}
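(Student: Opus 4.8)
The statement to be proved is Conjecture~\ref{conj:coloring}, and the natural plan is to show that $K_q$ is a bipartite swapping target for $q \geq 3$ (or, failing that, directly that $K_q$ is strongly GT), since by the road map laid out in Section~\ref{sec:summary-GT} this suffices. The plan is to follow the template of the independent set argument from Section~\ref{sec:intro-motivation}: start with a proper $q$-coloring of $G \sqcup G$, ``cross'' the edges to land in $G \x K_2$, and then repair the resulting (possibly improper) coloring by swapping the two color-values $f(v_0)$ and $f(v_1)$ at a well-chosen set of vertices $v \in V(G)$. The key structural fact to isolate first is: given $f \in \Hom(G \sqcup G, K_q)$, which we think of as a pair $(f_0, f_1)$ of proper colorings of $G$, after crossing we obtain an improper coloring whose set of monochromatic edges is precisely the set of $uv \in E(G)$ with $f_0(u) = f_1(v)$ or $f_1(u) = f_0(v)$; one should check that this ``bad edge'' set has a structure (bipartite, or more specifically an orientation/2-coloring compatible with swapping) that makes a consistent swap possible.

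First I would set up the general mechanism (the bipartite swapping trick, Proposition~\ref{prop:bst}) precisely and recall the definition of bipartite swapping target; then the task reduces to verifying that $K_q$ satisfies whatever combinatorial condition Definition~\ref{def:target} imposes. Second, I would analyze the bad-edge graph: for a single edge $uv$, the pair of constraints is that $\{f_0(u), f_1(v)\}$ and $\{f_1(u), f_0(v)\}$ are each non-monochromatic; I expect that the obstruction to a global swap is controlled by whether certain auxiliary graphs on $V(G)$ are bipartite, exactly as in the independent set case where the bad edges formed a bipartite subgraph. Third, assuming the swap is always possible, I would make the choice canonical (e.g. lexicographically least set of vertices whose swap yields a proper coloring of $G \x K_2$), and then argue injectivity by showing the swap set is recoverable from the image: given the repaired coloring on $G \x K_2$, one uncrosses and looks for the lexicographically least vertex set whose swap yields a proper coloring of $G \sqcup G$, and checks this returns the original coloring. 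This recovery step is where one must be careful that the two ``lexicographically least'' choices are genuinely inverse to each other.

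The main obstacle I anticipate is the bad-edge structural lemma for $q \geq 3$. For independent sets ($q = 2$ in spirit), the bad edges formed a bipartite graph because selecting both endpoints of an edge in $G \x K_2$ forced a very rigid local pattern, and bipartiteness of the bad subgraph is exactly what lets swaps be done consistently without creating new conflicts. For proper $q$-colorings with $q \geq 3$ there is much more freedom in the colors, so it is not at all clear that the analogous bad-edge graph is bipartite, nor that swapping at one vertex does not introduce a fresh monochromatic edge elsewhere. Indeed, the fact that this is stated as a \emph{conjecture} rather than a theorem signals that this step genuinely resists the swapping-trick approach in its current form; I would expect that $K_q$ is \emph{not} a bipartite swapping target for general $q$, and that proving the conjecture will require either a substantially more clever choice of swap set, a weighted or entropic refinement, or an entirely different reduction from the bipartite case established by Galvin--Tetali (Theorem~\ref{thm:GT}). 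A reasonable fallback, and perhaps all one can honestly claim, is to verify the conjecture in special cases—small $q$, or $G$ with additional structure such as a proper edge coloring into perfect matchings—where the swap can be made to work by hand.
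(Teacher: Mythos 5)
You have correctly identified that this statement is a conjecture, and the paper does not prove it; there is no proof of yours or theirs to compare. Your diagnosis of why the direct route fails is also exactly right and matches the paper: Section~\ref{sec:coloring} observes that for $q \geq 3$, $K_q$ contains an induced triangle, and by Example~\ref{ex:odd-cycle-not-target} an odd cycle (hence any graph containing one as an induced subgraph) is not a bipartite swapping target. So $K_q$ is not a bipartite swapping target, Corollary~\ref{cor:target-ineq} does not apply, and your ``bad-edge structural lemma'' genuinely fails in general: the violated-edge subgraph of $G$ can contain odd cycles (Example~\ref{ex:odd-cycle-not-target} exhibits one with $G = C_n$, $H = C_n$), so no global swap exists.

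Where your fallback discussion could be sharpened is in what the paper actually salvages. Rather than restricting to small $q$ or structured $G$, the paper proves Theorem~\ref{thm:asym-coloring}, an asymptotic version valid for $q$ sufficiently large relative to $N$. The device is to expand $P(G \sqcup G, q)$ and $P(G \times K_2, q)$ in the basis $\binom{q}{i}$ with coefficients $\hom^\surj(\cdot, K_i)$, and apply the bipartite swapping trick only to surjective homomorphisms with $i$ close to $2N$. A surjection onto $K_i$ with $i$ large forces the coloring to repeat few colors, which (Lemma~\ref{lem:cycle-violate1}) caps the length of any cycle among the violated edges below the odd girth of $G$; hence the bipartite swapping property holds automatically on that subclass, the trick gives a bijection at the top levels $i \geq 2N - t + 2$ and a strict injection at $i = 2N - t + 1$, and comparing polynomials term-by-term from the top yields the asymptotic inequality. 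This is a different kind of refinement from those you suggested — restricting the domain of the swap rather than cleverly modifying the swap itself — and it is worth noting that even the intermediate Conjectures~\ref{conj:K_q-strongly-GT} and~\ref{conj:surj-coloring} remain open.
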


From Theorem~\ref{thm:GT}, we know that Conjecture~\ref{conj:coloring} is true when $G$ is bipartite. Although we do not know how to prove the conjecture, we can show the following asymptotic result using our bipartite swapping trick.

\begin{theorem} \label{thm:asym-coloring}
For every $N$-vertex, $d$-regular graph $G$,
\[
	P(G, q) \leq P( K_{d,d}, q)^{N/(2d)}
\]
for all sufficiently large $q$ (depending on $N$). Note that equality occurs when $G$ is a disjoint union of $K_{d,d}$'s.
\end{theorem}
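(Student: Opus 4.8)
The plan is to apply the bipartite swapping trick with target $H=K_q$ and then reduce the claim to an asymptotic comparison of two chromatic polynomials. Set $m=|E(G)|=Nd/2$. A $d$-regular bipartite $G$ is already covered by Theorem~\ref{thm:GT} (apply it with source $G$ itself), so we may assume $G$ is not bipartite. As in Section~\ref{sec:intro-motivation}, an element of $\Hom(G\sqcup G,K_q)$ is a pair $(f_0,f_1)$ of proper $q$-colorings of $G$, and after crossing the edges an edge $uv\in E(G)$ is \emph{violated} if $f_0(u)=f_1(v)$ or $f_1(u)=f_0(v)$. The bipartite swapping trick (Proposition~\ref{prop:bst}) gives an injection from the set of those $(f_0,f_1)$ whose graph of violated edges is bipartite into $\Hom(G\x K_2,K_q)$; combined with Theorem~\ref{thm:GT} applied to the $2N$-vertex, $d$-regular bipartite graph $G\x K_2$, this yields
\[
	P(G,q)^2=\hom(G\sqcup G,K_q)\le\hom(G\x K_2,K_q)+E_q\le P(K_{d,d},q)^{N/d}+E_q,
\]
where $E_q$ is the number of pairs $(f_0,f_1)$ of proper $q$-colorings of $G$ whose set of violated edges contains an odd cycle. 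It then remains to show $P(K_{d,d},q)^{N/d}-P(G,q)^2\ge E_q$ for all large $q$.

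First I would estimate $E_q$. For fixed $G$ one has $E_q=O(q^{2N-g})$, where $g\ge3$ is the odd girth of $G$: an odd cycle of violated edges involves at least $g$ edges of $G$, and each requirement ``$uv$ is violated'' drops the degree in $q$ of the resulting count by one. In particular $E_q=O(q^{2N-3})$, and $E_q=O(q^{2N-5})$ when $G$ is triangle-free.

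Next I would compare the two monic degree-$2N$ polynomials $P(K_{d,d},q)^{N/d}$ and $P(G,q)^2$. Writing $\log P(G,q)=N\log q+\sum_{j\ge1}b_j(G)q^{-j}$, one has $b_1(G)=-m$ and $b_2(G)=-m/2-t(G)$, where $t(G)$ is the number of triangles of $G$; the same formulas with $m\mapsto d^2$, $t\mapsto0$ describe $K_{d,d}$. A short computation then shows that the $q^{2N}$ and $q^{2N-1}$ coefficients of $P(K_{d,d},q)^{N/d}-P(G,q)^2$ vanish and that its $q^{2N-2}$ coefficient equals $2t(G)\ge0$. If $G$ has a triangle, this positive leading term of order $q^{2N-2}$ dominates $E_q=O(q^{2N-3})$ and we are done for $q$ large. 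If $G$ is triangle-free, the $q^{2N-2}$ coefficient also vanishes and the $q^{2N-3}$ coefficient works out to be a positive multiple of $\tfrac{Nd(d-1)^2}{8}-\#C_4(G)$, where $\#C_4(G)$ is the number of $4$-cycles of $G$. Since
\[
	\#C_4(G)=\tfrac12\sum_{\{u,w\}}\binom{|N(u)\cap N(w)|}{2}\le\tfrac{d-1}{4}\sum_{\{u,w\}}|N(u)\cap N(w)|=\tfrac{d-1}{4}\sum_{v\in V(G)}\binom{d}{2}=\tfrac{Nd(d-1)^2}{8}
\]
for every $d$-regular $G$, with equality exactly when any two vertices have equal or disjoint neighborhoods, i.e.\ exactly when $G$ is a disjoint union of copies of $K_{d,d}$ (hence bipartite), this inequality is strict for our triangle-free non-bipartite $G$. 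Thus $P(K_{d,d},q)^{N/d}-P(G,q)^2$ has order $q^{2N-3}$ with positive leading coefficient, which dominates $E_q=O(q^{2N-5})$, finishing this case. When $G$ is a disjoint union of copies of $K_{d,d}$ the two sides are equal for every $q$.

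I expect the delicate point to be the polynomial comparison: the two sides agree to two leading orders (three, in the triangle-free case), so one must extract the first nonvanishing coefficient of the difference and recognize it—via the $4$-cycle count above—as nonnegative, with the expected equality case, and then match the order of this gap against the separately estimated order of $E_q$. Verifying the expansion of $\log P(G,q)$ through the $q^{-3}$ term, and the $O(q^{2N-g})$ bound on $E_q$, should be routine once set up; the case split according to whether $G$ contains a triangle seems unavoidable.
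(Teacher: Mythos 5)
Your core argument is correct, but it is a genuinely different route from the paper, and the first paragraph of your proposal (the bipartite swapping trick and $E_q$) is a red herring that you never actually use. The inequality $P(G,q)^2\le P(K_{d,d},q)^{N/d}+E_q$ you derive from the swapping trick has the wrong sign to help, and ``it remains to show $P(K_{d,d},q)^{N/d}-P(G,q)^2\ge E_q$'' is just proving a stronger statement than you need: once you have computed that $P(K_{d,d},q)^{N/d}-P(G,q)^2$ is $2t(G)q^{2N-2}+O(q^{2N-3})$ when $G$ has a triangle, and $2\bigl(\tfrac{Nd(d-1)^2}{8}-\#C_4(G)\bigr)q^{2N-3}+O(q^{2N-4})$ when $G$ is triangle-free and non-bipartite, positivity of the leading coefficient already gives the theorem directly, with no need of $E_q$ or Proposition~\ref{prop:bst} at all. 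The computation itself checks out: with $m=|E(G)|$ and $t=t(G)$, one indeed has $b_1=-m$, $b_2=-m/2-t$, and (for triangle-free $G$) $b_3=-m/3+\#C_4(G)$, and your bound $\#C_4(G)\le Nd(d-1)^2/8$ via $\binom{k}{2}\le\tfrac{d-1}{2}k$ with equality characterization forcing disjoint union of $K_{d,d}$'s is also right.

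Compared to the paper: the paper instead proves the sharper intermediate Proposition~\ref{prop:color-asymp-bipartite}, namely $P(G\sqcup G,q)\le P(G\times K_2,q)$ for large $q$, by expanding both chromatic polynomials in the binomial basis $\binom{q}{i}$, identifying coefficients with surjective homomorphism counts $\hom^\surj(\cdot,K_i)$, and using the bipartite swapping trick (Lemmas~\ref{lem:surj-bsp1}--\ref{lem:hom-surj-inject}) to show these coefficients agree for $i\ge 2N-t+2$ and favor $G\times K_2$ at $i=2N-t+1$, where $t$ is the odd girth; Theorem~\ref{thm:GT} then closes the gap. Your approach compares $P(G,q)^2$ directly with $P(K_{d,d},q)^{N/d}$ in the monomial basis and never needs to go beyond order $q^{-3}$ (triangles and $4$-cycles), so it is more elementary and self-contained, but it does not recover Proposition~\ref{prop:color-asymp-bipartite} and offers no structural insight toward Conjectures~\ref{conj:K_q-strongly-GT} and~\ref{conj:surj-coloring}, which are the motivation for the paper's method. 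Both arguments are valid; the paper's first nonvanishing coefficient appears at degree $2N-t+1$ in the $(G\sqcup G)$ versus $(G\times K_2)$ comparison, while yours appears at degree $2N-2$ or $2N-3$ in the $(G)$ versus $(K_{d,d})$ comparison, which is consistent since the extra gap is absorbed in Theorem~\ref{thm:GT}.
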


\subsection{Generalized independent sets} \label{summary-gen-indep}

Let $\c I(G, n)$ denote the set of assignments $f : V \to \{0, 1, \dots, n\}$ so that the sum of the endpoints of an edge never exceeds $n$. Let $i(G, n) = \abs{\c I(G, n)}$. When $n = 1$, this construction corresponds to independent sets, so the following result is a generalization of Theorem \ref{thm:kahn-zhao}.

\begin{theorem} \label{thm:i}
For any $N$-vertex, $d$-regular graph $G$, and positive integer $n$,
\[
	i(G, n) \leq i(K_{d,d}, n)^{N/(2d)}.
\]
\end{theorem}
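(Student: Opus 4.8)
The plan is to realize $i(G,n)$ as $\hom(G, H_n)$ for an appropriate graph $H_n$, and then to show that $H_n$ falls into one of the sufficient conditions established earlier in the paper, namely that $H_n$ is a threshold graph (and hence a bipartite swapping target, hence strongly GT, hence GT). Concretely, take $V(H_n) = \{0, 1, \dots, n\}$ with an edge (possibly a loop) between $a$ and $b$ precisely when $a + b \le n$. Then a map $f : V(G) \to V(H_n)$ is a homomorphism exactly when $f(u) + f(v) \le n$ for every edge $uv \in E(G)$, which is the defining condition of $\c I(G,n)$. Hence $i(G,n) = \hom(G, H_n)$, and in particular $i(K_{d,d}, n) = \hom(K_{d,d}, H_n)$.

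Next I would observe that $H_n$ is by construction a threshold graph in the sense of Definition~\ref{def:H}: its vertices are the multiset of real numbers $\{0, 1, \dots, n\}$ (here all distinct, with multiplicity one), and there is an edge between two of them exactly when their sum does not exceed the threshold $n$. By Theorem~\ref{thm:4c}, every threshold graph is GT; equivalently, chasing through the road map, Proposition~\ref{prop:4c-bst} gives that $H_n$ is a bipartite swapping target, Corollary~\ref{cor:target-ineq} gives that it is strongly GT, and Lemma~\ref{lem:sGT-GT} gives that it is GT. Applying the GT inequality \eqref{eq:def:GT} to $H = H_n$ and to the given $N$-vertex, $d$-regular graph $G$ yields
\[
	i(G, n) = \hom(G, H_n) \le \hom(K_{d,d}, H_n)^{N/(2d)} = i(K_{d,d}, n)^{N/(2d)},
\]
which is exactly the desired bound.

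There is essentially no serious obstacle here once the earlier machinery (threshold graphs are GT) is in hand; the only thing to be careful about is the bookkeeping at the definitional level — checking that $H_n$ really does match Definition~\ref{def:H} (in particular that loops are handled correctly, since $a + a \le n$ should produce a loop at $a$ for $a \le n/2$, and this is precisely what the threshold convention does), and verifying that the homomorphism condition unwinds to the sum-of-endpoints condition with the inequality in the correct direction. One should also note the equality case: when $G$ is a disjoint union of copies of $K_{d,d}$, $\hom$ is multiplicative over disjoint unions and the exponent $N/(2d)$ counts exactly the number of copies, so equality holds, matching the remark in the theorem statement. Thus the whole argument is a short reduction to Theorem~\ref{thm:4c}.
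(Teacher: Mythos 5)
Your proof is correct and takes essentially the same route as the paper: identify $\c I(G,n)$ with $\Hom(G, H_n)$ and note that $H_n$ is a threshold graph, hence (via the chain through bipartite swapping targets and the strongly GT property) GT. The paper simply invokes Lemma~\ref{lem:threshold-4c} (no alternating 4-circuit) directly rather than spelling out the full chain of implications, but the argument is the same.
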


The collection $\c I(G, n)$ arises naturally in statistical mechanics \cite{MazelSuhov} and communication networks \cite{GMRT, RSZM}. Galvin et al.~\cite{GMRT} related it to the ``finite-state hard core model.'' In these stochastic modeling applications, it is common to weight each assignment in $\c I(G,n)$ using a geometric or Poisson distribution. Our results also extend to weighted generalization, which are discussed in Section~\ref{sec:weighted}. In fact, Theorem \ref{thm:i} remains true if we replace $\c I(G, n)$ by the collections of assignments $f : V \to A$, where $A$ is some fixed finite set of real numbers, so that the sum of the numbers assigned to endpoints of an edge never exceed some threshold.

\subsection{Stable set polytope} \label{sec:summary-vol}

We consider one more measure on the independent sets of $G$, namely the volume $i_V(G)$ of the \emph{stable set polytope} of $G$, which is defined to be the convex hull of the characteristic vectors of the independent sets of $G$. We prove the following inequality, which has a form analogous to the previous results.

\begin{theorem} \label{thm:iV}
For any $N$-vertex, $d$-regular graph $G$, the volume of the stable set polytope of $G$ satisfies
\[
	i_V(G) \leq i_V\(K_{d,d}\)^{N/(2d)} = \binom{2d}{d}^{-N/(2d)}.
\]
\end{theorem}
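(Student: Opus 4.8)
The plan is to show that the stable set polytope of $K_{d,d}$ is, in a suitable sense, the extremal configuration, mirroring the structure of the earlier results. The natural strategy is to reduce the volume inequality to a graph-homomorphism-type inequality that we already know how to handle via the bipartite swapping trick. First I would establish the two key ingredients. The multiplicativity statement: if $G = G_1 \sqcup G_2$, then the stable set polytope of $G$ is the product polytope of those of $G_1$ and $G_2$, so $i_V(G_1 \sqcup G_2) = i_V(G_1) i_V(G_2)$; in particular $i_V(G)^2 = i_V(G \sqcup G)$. And the base computation $i_V(K_{d,d}) = \binom{2d}{d}^{-1}$: the stable set polytope of $K_{d,d}$ is the set of $(\bx,\by) \in [0,1]^d \times [0,1]^d$ with $x_i + y_j \le 1$ for all $i,j$, equivalently $\max_i x_i + \max_j y_j \le 1$; its volume is $\int_0^1 \int_0^1 s^d t^d \,[\text{appropriate density}]$, which after the standard computation with the distributions of $\max_i x_i$ and $\max_j y_j$ gives $\binom{2d}{d}^{-1}$ — I would write this as $2\int_0^{1/2} s^d \cdot d\, s^{d-1}\, ds$ times the analogous factor, or more cleanly via the Beta-function identity $\int_0^1 (1-u)^d \, d u^{d-1}\, du = \binom{2d}{d}^{-1}\cdot(\text{const})$, and just verify the constant works out.

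The main step is the comparison $i_V(G \sqcup G) \le i_V(G \x K_2)$, i.e.\ that the stable set polytope is itself "strongly GT" for the volume measure. Here I would try to realize $i_V$ as a limit (or integral) of $\hom$-type quantities so that Corollary~\ref{cor:target-ineq} applies. Concretely, the volume of the stable set polytope equals $\lim_{n \to \infty} n^{-|V(G)|} \cdot \#\{f : V(G) \to \{0,1,\dots,n\} : f(u) + f(v) \le n \ \forall uv \in E\}$ — this is just a Riemann-sum approximation of $\vol(\STAB(G))$ by rational points, valid because the polytope is defined by the inequalities $x_u + x_v \le 1$ together with $0 \le x_u \le 1$, and rescaling by $n$. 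But that counting function is exactly $i(G,n)$ from Section~\ref{summary-gen-indep}, which is a $\hom(G, H_n)$ for the threshold graph $H_n$ on vertex multiset $\{0,1,\dots,n\}$ with threshold $n$. Threshold graphs are bipartite swapping targets (Proposition~\ref{prop:4c-bst}), hence strongly GT (Corollary~\ref{cor:target-ineq}), so $i(G \sqcup G, n) = i(G,n)^2 \le i(G \x K_2, n)$ for every $n$. Dividing by $n^{2|V(G)|}$ and letting $n \to \infty$ gives $i_V(G)^2 \le i_V(G \x K_2)$.

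With this in hand the argument closes exactly as in Lemma~\ref{lem:sGT-GT}: for $G$ an $N$-vertex $d$-regular graph, $G \x K_2$ is $2N$-vertex, $d$-regular, and bipartite, so applying Theorem~\ref{thm:GT} to the analog of $\hom(G \x K_2, H_n)$ — or, more directly, applying Theorem~\ref{thm:i} to $G \x K_2$ and taking the $n \to \infty$ limit again — yields $i_V(G \x K_2) \le i_V(K_{d,d})^{N/d}$. Combining, $i_V(G)^2 \le i_V(G \x K_2) \le i_V(K_{d,d})^{N/d}$, hence $i_V(G) \le i_V(K_{d,d})^{N/(2d)} = \binom{2d}{d}^{-N/(2d)}$, with equality for disjoint unions of $K_{d,d}$ by multiplicativity. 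The point where I expect to have to be careful is the limiting argument: I need that $n^{-|V(G)|} i(G,n) \to i_V(G)$ genuinely holds (uniform continuity of the volume under the lattice approximation of a rational polytope is standard, but I should state it cleanly), and I should double-check the exponent bookkeeping so that the $n \to \infty$ limit commutes correctly with raising to the power $N/(2d)$. The base-case volume computation $i_V(K_{d,d}) = \binom{2d}{d}^{-1}$ is the only genuinely computational part, and it is a short exercise.
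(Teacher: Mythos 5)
There is a genuine gap in the limiting step, and it is precisely the place you flagged as needing care.

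You claim that the stable set polytope $\STAB(G)$ "is defined by the inequalities $x_u + x_v \le 1$ together with $0 \le x_u \le 1$," and consequently that $n^{-|V(G)|}\, i(G,n) \to i_V(G)$. Both of these are false for non-bipartite $G$. The polytope cut out by those inequalities is the paper's $\ESTAB(G)$, not $\STAB(G)$; the two coincide if and only if $G$ is bipartite (Theorem~\ref{thm:stab=estab}), and for non-bipartite $G$ the containment $\STAB(G) \subsetneq \ESTAB(G)$ is strict, e.g.\ $(\tfrac12,\tfrac12,\tfrac12) \in \ESTAB(K_3) \setminus \STAB(K_3)$. Since $i(G,n)$ counts lattice points of $n\ESTAB(G)$, the Ehrhart limit gives $\vol(\ESTAB(G))$, which is \emph{strictly greater} than $i_V(G) = \vol(\STAB(G))$ when $G$ is non-bipartite. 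So the identity you wanted to rely on as "standard" is the one thing that breaks. Your derived inequality $i_V(G)^2 \le i_V(G \times K_2)$ does happen to be true (because $G \times K_2$ is bipartite, so $\STAB = \ESTAB$ on the right, while $\STAB \subseteq \ESTAB$ on the left gives the slack in the favorable direction), but the justification as written proves something else.

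The fix is exactly what the paper does: introduce $\ESTAB(G)$ explicitly, note $i_V(G) = \vol(\STAB(G)) \le \vol(\ESTAB(G))$ always, prove $\vol(\ESTAB(G)) \le \vol(\ESTAB(K_{d,d}))^{N/(2d)}$ by taking the Ehrhart limit of $i(G,n) \le i(K_{d,d},n)^{N/(2d)}$ (Theorem~\ref{thm:i}, applied directly --- you do not need to redo the $G \sqcup G$ vs.\ $G \times K_2$ comparison at the polytope level), and then convert back using $\vol(\ESTAB(K_{d,d})) = \vol(\STAB(K_{d,d})) = i_V(K_{d,d})$ because $K_{d,d}$ is bipartite. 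Your base computation $i_V(K_{d,d}) = \binom{2d}{d}^{-1}$ via the distribution of $\max_i x_i$ and $\max_j y_j$ is a correct alternative to the paper's triangulation argument, and the multiplicativity observation is fine. But without the $\STAB$ vs.\ $\ESTAB$ distinction the limiting argument does not prove what you assert it proves.
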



\section{Bipartite swapping trick} \label{sec:trick}

In this section, we describe the main technique of our paper. Our goal is to construct a correspondence between a subset of $\Hom(G \sqcup G, H)$ and a subset of $\Hom(G \x K_2, H)$.

We name the vertices of both $G \sqcup G$ and $G \x K_2$ by $v_i$, for $v \in V$ and $i \in \{0, 1\}$, such that the edges in $G \sqcup G$ are $u_iv_i$ and edges of $G \x K_2$ are $u_iv_{1-i}$, for $uv \in E$ and $i \in \{0, 1\}$. 



Let us describe a representation of elements of $\Hom(G \sqcup G, H)$ and $\Hom(G \x K_2, H)$. An \emph{$H$-pair-labeling} of $G$ is simply an assignment $V(G) \to V(H) \x V(H)$, with no additional constraints. Equivalently, it is a way of labeling each vertex of $G$ with a pair of vertices of $H$. Every $f \in \Hom(G \cup G, H)$ can be represented by an $H$-pair-labeling of $G$ with additonal constraints, assigning $v \in V(G)$ to the pair $(f(v_0), f(v_1))$, satisying the constraints that whenever $uv \in E(G)$, the first vertex of $H$ assigned to $u$ must be adjacent (in $H$) to the first vertex assigned to $v$, and the second vertex assigned to $u$ must be adjacent to the second vertex assigned to $v$. It is easy to see that this describes a bijective correspondence between $\Hom(G \cup G, H)$ and the set of $H$-pair-labelings satisfying these constraints. Similarly, we can represent elements of $\Hom(G \x K_2, H)$ by $H$-pair-labelings satisfying the constraint that whenever $uv \in E(G)$, the first vertex assigned to $u$ must be adjacent to the second vertex assigned to $v$.

If $f \in \Hom(G \cup G, H)$ or $f \in \Hom(G \x K_2, H)$, we denote by $\pair(f)$ the corresponding $H$-pair-labeling.


\begin{example}
Let $G$ and $H$ be the following graphs. The vertices of $H$ are named $a, b, c, d$.
\begin{center}
\begin{tikzpicture}[sf]
	\begin{scope}[xshift=4cm]
		\node[P, label=below:$a$] (a) at (180:1) {};
		\node[P, label=below:$b$] (b) at (0,0) {};
		\node[P, label=right:$c$] (c) at (30:1) {};
		\node[P, label=right:$d$] (d) at (-30:1) {};
		\draw (a) edge[-,in = 135, out = 225, loop] (a) -- (b) -- (c) -- (d) -- (b);
		\node at (0,-1) {$H$};
	\end{scope}
	
	\begin{scope}
		\node[P] (1) at (.5,.5) {};
		\node[P] (2) at (-.5,.5) {};
		\node[P] (3) at (-.5,-.5) {};
		\node[P] (4) at (.5,-.5) {};
		\draw (1)--(2)--(3)--(4)--(1)--(3);
		
		\node at (0,-1) {$G$};
	\end{scope}
\end{tikzpicture}
\end{center}
Then the $H$-pair-labeling on the left diagram below represents an element of $\Hom(G \sqcup G, H)$ (but not an element of $\Hom(G \x K_2, H)$), while the $H$-pair-labeling on the right diagram below represents an element of $\Hom(G \x K_2, H)$ (but not an element of $\Hom(G \sqcup G, H)$). Recall that in both cases we label each $v \in V(G)$ by $(f(v_0), f(v_1))$.
\begin{center}
\begin{tikzpicture}[sf]
	\begin{scope}
		\node[P, label=right:{$(a,c)$}] (1) at (.5,.5) {};
		\node[P, label=left:{$(b,d)$}] (2) at (-.5,.5) {};
		\node[P, label=left:{$(a,b)$}] (3) at (-.5,-.5) {};
		\node[P, label=right:{$(b,d)$}] (4) at (.5,-.5) {};
		\draw (1)--(2)--(3)--(4)--(1)--(3);
		
		\node at (0,-1.3) {\small $\pair(f)$ for $f \in \Hom(G \sqcup G, H)$};
	\end{scope}
	\begin{scope}[xshift=6cm]
		\node[P, label=right:{$(c,a)$}] (1) at (.5,.5) {};
		\node[P, label=left:{$(a,d)$}] (2) at (-.5,.5) {};
		\node[P, label=left:{$(b,b)$}] (3) at (-.5,-.5) {};
		\node[P, label=right:{$(a,d)$}] (4) at (.5,-.5) {};
		\draw (1)--(2)--(3)--(4)--(1)--(3);
		
		\node at (0,-1.3) {\small $\pair(f)$ for $f \in \Hom(G \x K_2, H)$};
	\end{scope}
\end{tikzpicture}
\end{center}

\end{example}

We wish to transform a homomorphism $f \in \Hom(G \sqcup G, H)$ into a homomorphism in $\Hom(G \x K_2, H)$. We might naively do by hoping that the same map of vertices works, that is, perhaps we can keep the same $H$-pair-labeling representation. However, this does not always work, because the same $H$-pair-labeling might no longer represent a homomorphism in $\Hom(G \x K_2, H)$, as is the case in the previous example. The problem is that the $H$-pair-labeling needs to satisfy different contraints to be a homomorphism in $\Hom(G \sqcup G, H)$ and in $\Hom(G \x K_2, H)$. The following definition is motivated by this obstruction.

\begin{definition} \label{def:violated}
Let $p = (p_1,p_2) : V(G) \to V(H) \x V(H)$ be an $H$-pair-labeling of $G$. We say that $uv \in E(G)$ is \emph{safe} with respect to $p$ if $p_i(u)p_j(v) \in E(H)$ for all $i,j \in \set{0,1}$, otherwise we say that $uv$ is \emph{violated} with respect to $p$.

If $f \in \Hom(G \sqcup G, H)$ or $f \in \Hom(G \x K_2, H)$, then we say that $uv \in E(G)$ is safe (resp.~violated with respect to $f$ if the corresponding $H$-pair-labeling is safe (resp.~violated) with respect to $\pair(f)$.
\end{definition}


Note that we speak of edges of $G$ being violated, and not edges of $G \sqcup G$ or $G \x K_2$. For instance, when we say that $uv \in E(G)$ is violated with respect to $f \in \Hom(G \sqcup G, H)$, the violation refers to not what happens in the current homomorphism (as $f$ is a valid homomorphism), but the obstructions to a homorphism once we transform $G \sqcup G$ to $G \x K_2$ and attempting to keep the ``same'' $f$.

\begin{example} \label{ex:map-assign}
A homomorphism to $K_q$ is the same as a proper $q$-coloring of the graph. Suppose that we represent the colors (i.e. vertices of $K_q$) by letters. In the diagrams below, the first $K_q$-pair-labeling on the left represents an element of $\Hom(G \sqcup G, K_4)$ and the second $K_q$-pair-labeling represents an element of $\Hom(G \x K_2, K_4)$. The violated edges of $G$ in each case is highlighted in bold.
\begin{center}
\begin{tikzpicture}[sf]
	\begin{scope}
		\node[P, label=below:{$(b,a)$}] (1) at (0,0) {};
		\node[P, label=below:{$(c,d)$}] (2) at (3,0) {};
		\node[P, label=below:{$(c,d)$}] (3) at (1,1) {};
		\node[P, label=below:{$(a,a)$}] (4) at (2,1) {};
		\node[P, label=above:{$(a,b)$}] (5) at (0,2) {};
		\node[P, label=above:{$(b,c)$}] (6) at (3,2) {};
		\draw (1)--(2)--(4)--(3)--(1)--(5)--(3); \draw (5)--(6)--(2); \draw (4)--(6);	
		\node at (1.5,-1) {$\pair(f)$ for $f \in \Hom(G \sqcup G, K_4)$};
		\draw[t] (1)--(5)--(6)--(2);
	\end{scope}
	
	\begin{scope}[xshift=6cm]
		\node[P, label=below:{$(d,b)$}] (1) at (0,0) {};
		\node[P, label=below:{$(d,b)$}] (2) at (3,0) {};
		\node[P, label=below:{$(a,b)$}] (3) at (1,1) {};
		\node[P, label=below:{$(a,c)$}] (4) at (2,1) {};
		\node[P, label=above:{$(d,c)$}] (5) at (0,2) {};
		\node[P, label=above:{$(d,c)$}] (6) at (3,2) {};
		\draw (1)--(2)--(4)--(3)--(1)--(5)--(3); \draw (5)--(6)--(2); \draw (4)--(6);	
		\node at (1.5,-1) {$\pair(f)$ for $f \in \Hom(G \x K_2, K_4)$};
		\draw[t] (1)--(2)--(6)--(5)--(1)--(3)--(4)--(6);
	\end{scope}
	
	\begin{scope}[xshift=12cm]
		\node[P, label=below:$a$] (a) at (0,0.5) {};
		\node[P, label=below:$b$] (b) at (1,0.5) {};
		\node[P, label=above:$c$] (c) at (1,1.5) {};
		\node[P, label=above:$d$] (d) at (0,1.5) {};
		\draw (a)--(b)--(c)--(d)--(a)--(c);
		\draw (b)--(d);
		\node at (.5, -1) {$H = K_4$};
	\end{scope}
\end{tikzpicture}
\end{center}
\end{example}

Here is the key operation used in the bipartite swapping trick.

\begin{definition}[The swapping operation] \label{def:swap}
  Let $p$ be an $H$-pair-labeling of $G$, and let $W \subseteq V(G)$. Define $\swap(p, W)$ to be the $H$-pair-labeling obtained from $p$ by swapping each pair of labels assigned to vertices in $W$.
\end{definition}

Note that swapping does not affect whether an edge is violated.

The key insight is that if we start with $f \in \Hom(G \sqcup G, H)$, then the violated edges prevent $f$ from being a valid homomorphism in $\Hom(G \x K_2, H)$, but we can fix this issue by  swapping exactly one endpoint of each violated edge. In order to perform this operation successfully to the whole graph, the set of violated edges must form a bipartite subgraph, hence the following definition.

\begin{definition}[Bipartite swapping property] \label{def:bsp}
Let $p$ be an $H$-pair-labeling of $G$. We say that $p$ has the \emph{bipartite swapping property} if the edges of $G$ that are violated with respect to $p$ is a bipartite subgraph of $G$. Similarly, we say that $f \in \Hom(G \sqcup G, H)$ or $f \in \Hom(G \x K_2, H)$ has the bipartite swapping property if $\pair(f)$ does.
\end{definition}



Note that bipartite-ness appears in two separate places. The first is where we compare any arbitrary graph $G$ to a bipartite graph $G \x K_2$. The second is where we consider bipartite subgraphs of $G$.

\begin{example} \label{ex:swap}
In Example~\ref{ex:map-assign}, the first homomorphism has the bipartite swapping property while the second one does not. Let $f$ denote the first homomorphism, whose $H$-pair-labeling is reproduced below on the left. Let $W$ denote the set of circled vertices. Then $\swap(\pair(f), W)$, shown on the right, represents an element of $\Hom(G \x K_2, K_4)$. Note that $W$ contains exactly one endpoint of every violated edge.
\begin{center}
\begin{tikzpicture}[sf]
	\begin{scope}
		\node[P, label=below:{$(b,a)$}] (1) at (0,0) {};
		\node[P, label=below:{$(c,d)$}] (2) at (3,0) {};
		\node[P, label=below:{$(c,d)$}] (3) at (1,1) {};
		\node[P, label=below:{$(a,a)$}] (4) at (2,1) {};
		\node[P, label=above:{$(a,b)$}] (5) at (0,2) {};
		\node[P, label=above:{$(b,c)$}] (6) at (3,2) {};
		\draw (1)--(2)--(4)--(3)--(1)--(5)--(3); \draw (5)--(6)--(2); \draw (4)--(6);	
		\node at (1.5,-1) {$\pair(f)$ for $f \in \Hom(G \sqcup G, K_4)$};
		\draw[t] (1)--(5)--(6)--(2);
		\draw[dashed] (2) circle (.7);
		\draw[dashed] (5) circle (.7);
		
		\draw[-latex] (4,1)--(5,1);
	\end{scope}
	
	\begin{scope}[xshift=6cm]
		\node[P, label=below:{$(b,a)$}] (1) at (0,0) {};
		\node[P, label=below:{$(d,c)$}] (2) at (3,0) {};
		\node[P, label=below:{$(c,d)$}] (3) at (1,1) {};
		\node[P, label=below:{$(a,a)$}] (4) at (2,1) {};
		\node[P, label=above:{$(b,a)$}] (5) at (0,2) {};
		\node[P, label=above:{$(b,c)$}] (6) at (3,2) {};
		\draw (1)--(2)--(4)--(3)--(1)--(5)--(3); \draw (5)--(6)--(2); \draw (4)--(6);	
		\node[text width=5cm] at (1.5,-1) { $\swap(\pair(f), W)$ represents an element of $\Hom(G \x K_2, K_4)$};
		\draw[t] (1)--(5)--(6)--(2);
	\end{scope}
	
\end{tikzpicture}
\end{center}
\end{example}

Let $\Hom^{\bsp}(G \sqcup G, H)$ denote the subset of $\Hom(G \sqcup G, H)$ containing all homomorphisms possessing the bipartite swapping property. Similarly let $\Hom^{\bsp}(G \x K_2, H)$ denote the subset of $\Hom(G \x K_2, H)$ containing all homomorphisms possessing the bipartite swapping property.

\begin{proposition}[Bipartite swapping trick] \label{prop:bst}
For graphs $G$ and $H$ ($H$ possibly with loops), there exists a bijection between $\Hom^{\bsp}(G \sqcup G, H)$ and $\Hom^{\bsp}(G \x K_2, H)$, obtained through some application of the swapping operation.
\end{proposition}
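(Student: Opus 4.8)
## Proof Proposal

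The plan is to construct the bijection explicitly via a canonical choice of swapping set, mirroring the independent-set argument recalled in Section~\ref{sec:intro-motivation}. Fix a linear order on $V(G)$, which induces a linear order on the subsets of $V(G)$ (say, lexicographically, or by comparing indicator vectors). The first observation to record is that swapping does not change the set of violated edges: if $p$ has the bipartite swapping property, so does $\swap(p,W)$ for every $W$. Consequently, ``having the bipartite swapping property'' is a property of the underlying object that is preserved by the map we are about to build, and the sets of violated edges on the $\Hom^{\bsp}(G \sqcup G, H)$ side and the $\Hom^{\bsp}(G \x K_2, H)$ side match up under it.

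Next I would record the pointwise criterion for when an $H$-pair-labeling $p$ of $G$ actually represents an element of $\Hom(G\sqcup G,H)$ versus $\Hom(G\x K_2,H)$: the former requires $p_0(u)p_0(v)\in E(H)$ and $p_1(u)p_1(v)\in E(H)$ for every $uv\in E(G)$, while the latter requires $p_0(u)p_1(v)\in E(H)$ and $p_1(u)p_0(v)\in E(H)$. If $uv$ is safe, all four products are edges of $H$, so such an edge imposes no constraint in either direction. If $uv$ is violated, then $p$ lies in $\Hom(G\sqcup G,H)$-representation exactly when the ``parallel'' products are edges and it lies in the $\Hom(G\x K_2,H)$-representation exactly when the ``crossed'' products are edges; swapping the label at exactly one of $u,v$ interchanges these two conditions. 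The key combinatorial point is: given $f\in\Hom^{\bsp}(G\sqcup G,H)$, let $B$ be its (bipartite) set of violated edges, with a bipartition of $B$ into parts $X$ and $Y$; then taking $W$ to be $X$ (together with, say, no vertices outside $V(B)$) produces $\swap(\pair(f),W)$ which contains exactly one endpoint of every violated edge, hence represents an element of $\Hom(G\x K_2,H)$. To make this canonical and reversible, I would define the map $\Phi$ by: $W(f) \coloneqq$ the lexicographically least $W\subseteq V(G)$ such that $\swap(\pair(f),W)$ represents a homomorphism in $\Hom(G\x K_2,H)$, and set $\Phi(f)$ to be that homomorphism. The preceding paragraph guarantees at least one such $W$ exists, so $\Phi$ is well-defined; and since $\swap(\cdot,W)$ is an involution, $\Phi(f)$ has the same violated-edge set $B$ and hence the bipartite swapping property, so $\Phi$ maps into $\Hom^{\bsp}(G\x K_2,H)$.

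For injectivity/bijectivity I would define $\Psi:\Hom^{\bsp}(G\x K_2,H)\to\Hom^{\bsp}(G\sqcup G,H)$ by the symmetric recipe: $\Psi(g)$ is obtained from $g$ by swapping on the lexicographically least $W'\subseteq V(G)$ such that $\swap(\pair(g),W')$ represents an element of $\Hom(G\sqcup G,H)$. The crucial claim is that $\Phi$ and $\Psi$ are mutually inverse. Here is the point: if $g=\Phi(f)$ with swapping set $W=W(f)$, then $\swap(\pair(g),W)=\pair(f)$ already represents an element of $\Hom(G\sqcup G,H)$, so the set $W$ is a \emph{candidate} for $\Psi(g)$; I must argue it is in fact the lexicographically least candidate. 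This is where the standard trick applies: a set $W'$ is a candidate for $\Psi(g)$ iff $\swap(\pair(g),W')$ is a valid $(G\sqcup G)$-homomorphism, iff (since $g$ and $f$ differ by $\swap(\cdot,W)$) the set $W'\triangle W$ is a candidate $W$-set for the \emph{forward} problem starting at $f$ in a suitable sense — more precisely one shows the set of candidate swapping sets for $g\to(G\sqcup G)$ is exactly $\{W' : \swap(\pair(f),W'\triangle W)\text{ is a valid }(G\x K_2)\text{-homomorphism}\}$, and minimality is not automatically preserved by $\triangle W$. The clean fix, which I expect to be the main obstacle and which I would handle carefully, is to observe that the set of valid swapping sets is determined \emph{only} by the violated-edge subgraph $B$ (safe edges and isolated vertices are irrelevant): $W$ works for the $\Hom(G\x K_2,H)$ direction iff $W\cap V(B)$ is a ``transversal-type'' set hitting exactly one endpoint of each edge of $B$, equivalently $W\cap V(B)$ restricts to one side of the bipartition on each connected component of $B$ and is unconstrained off $V(B)$. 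So the candidate sets for $f\to(G\x K_2)$ and for $g\to (G\sqcup G)$ are related by complementation within each component of $B$ (and agree off $V(B)$), and under such a symmetric correspondence the lexicographically least element maps to the lexicographically least element provided we break ties consistently — e.g. by declaring $W$ never contains a vertex outside $V(B)$, which forces the minimal $W$ to live inside $V(B)$ in both directions. With that normalization, $\Phi$ and $\Psi$ become genuinely inverse componentwise, establishing the bijection; and by construction every element of the bijection is obtained by an application of the swapping operation, as required.
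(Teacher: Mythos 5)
Your overall plan---canonically pick the swapping set $W$ by some fixed rule and use the involutivity of $\swap$---is the right idea and is essentially what the paper does, but the invertibility argument contains a genuine error. You assert that the candidate swapping sets for $f\to(G\times K_2)$ and for $g=\Phi(f)\to(G\sqcup G)$ are ``related by complementation within each component of~$B$.'' This is false: the two candidate sets are \emph{identical}. To see why, note that $W$ is a valid swap for $f\in\Hom(G\sqcup G,H)$ exactly when $W$ hits exactly one endpoint of every violated edge (the safe edges impose no constraint, and a violated edge has its parallel products in $E(H)$ but not both crossed products, so precisely the ``one endpoint swapped'' configuration produces a valid $G\times K_2$ constraint). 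Symmetrically, $W'$ is a valid swap for $g\in\Hom(G\times K_2,H)$ exactly when $W'$ hits exactly one endpoint of every violated edge of $g$. Since swapping does not change the violated-edge set, $\viol(g)=\viol(f)$, so these two conditions describe \emph{the same family of sets}---no complementation enters. (Your intermediate characterization ``\,$W'$ is a candidate iff $\swap(\pair(f),W'\triangle W)$ is a valid $(G\times K_2)$-homomorphism'' also has the wrong target: it should be $(G\sqcup G)$-homomorphism, and in any case the $\triangle W$ detour is unnecessary.) Because the candidate families coincide, the lexicographically least $W$ is automatically the same for the forward and backward problems, and the involution $\swap(\cdot,W)$ immediately gives $\Psi\circ\Phi=\mathrm{id}$ with no tie-breaking normalization needed. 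Even if the complementation claim had been true, ``lex-least maps to lex-least under componentwise complementation'' would not hold in general, so that step would still need repair.

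For comparison, the paper sidesteps lexicographic choices entirely: for each bipartite $F\subseteq E(G)$ it fixes once and for all a set $W_F$ hitting exactly one endpoint of every edge of $F$, and defines the map by $f\mapsto\swap(\pair(f),W_{\viol(f)})$. Since $\viol$ is preserved by swapping and $\swap(\cdot,W_F)$ is an involution, this map is its own inverse on the other side. Your lex-least recipe is a valid instance of this (it is one particular way to choose $F\mapsto W_F$), but the cleaner formulation removes the need to argue anything about minimality at all. So the gap in your write-up is repairable, but the repair is to \emph{delete} the complementation analysis and instead observe that $\viol$ is a swap-invariant, hence the choice of $W$ depends only on data common to $f$ and $\Phi(f)$.
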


We need to address two issues. First we need to check that such swapping operation produces valid homomorphisms. Second we need to describe how to consistently choose the subset of vertices of $G$ in order to make the map a bijection.

\begin{lemma} \label{lem:swap}
Suppose $f \in \Hom^\bsp(G \sqcup G, H)$ (resp.~$\Hom^\bsp(G \x K_2, H)$) and $W \subseteq V(G)$, such that each violated edge with respect to $f$ has exactly one endpoint in $W$. Then $\swap(\pair(f), W)$ represents an element of $\Hom^\bsp(G \x K_2, H)$ (resp.~$\Hom^\bsp(G \sqcup G, H)$).
\end{lemma}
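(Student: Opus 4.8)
The plan is to verify Lemma~\ref{lem:swap} by checking two things: (i) that the new $H$-pair-labeling $\swap(\pair(f),W)$ actually satisfies the constraints required to represent an element of $\Hom(G\x K_2,H)$ (resp. $\Hom(G\sqcup G,H)$), and (ii) that it still has the bipartite swapping property. The second part is essentially immediate from the remark made just before Definition~\ref{def:bsp} that \emph{swapping does not affect whether an edge is violated}: since $\swap$ only permutes the two coordinates at certain vertices, the set of violated edges of $G$ is literally unchanged, so if that set was a bipartite subgraph before it still is. Thus the real content is part (i), and by symmetry of the swapping operation (applying $\swap(\cdot,W)$ twice returns the original labeling) it suffices to handle one direction, say $f\in\Hom^{\bsp}(G\sqcup G,H)\mapsto\Hom^{\bsp}(G\x K_2,H)$.

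For part (i) I would argue edge by edge. Write $p=\pair(f)$ and $p'=\swap(p,W)$, with coordinates $p=(p_1,p_2)$ and $p'=(p_1',p_2')$; note $p'_i(v)=p_i(v)$ if $v\notin W$ and $p'_1(v)=p_2(v)$, $p'_2(v)=p_1(v)$ if $v\in W$. We must show that for every $uv\in E(G)$ we have $p'_1(u)p'_2(v)\in E(H)$ (the defining constraint for $\Hom(G\x K_2,H)$ recalled in Section~\ref{sec:trick}). Split into cases according to whether $uv$ is safe or violated with respect to $f$. If $uv$ is safe, then $p_i(u)p_j(v)\in E(H)$ for \emph{all} $i,j\in\{0,1\}$; since $p'_1(u)$ is one of $p_1(u),p_2(u)$ and $p'_2(v)$ is one of $p_1(v),p_2(v)$, the required adjacency holds regardless of whether $u$ or $v$ lies in $W$. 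If $uv$ is violated, then by hypothesis exactly one of $u,v$ is in $W$. Say $u\in W$, $v\notin W$ (the other case is symmetric): then $p'_1(u)=p_2(u)$ and $p'_2(v)=p_2(v)$, so the constraint $p'_1(u)p'_2(v)\in E(H)$ reads $p_2(u)p_2(v)\in E(H)$, which is exactly the ``second-coordinate'' constraint that $f\in\Hom(G\sqcup G,H)$ guarantees for the edge $uv$. Symmetrically, if $v\in W$, $u\notin W$, the constraint becomes $p_1(u)p_1(v)\in E(H)$, which is the ``first-coordinate'' constraint satisfied by $f$. Either way the edge $uv$ of $G\x K_2$ is respected. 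Running over all $uv\in E(G)$ shows $p'$ represents an element of $\Hom(G\x K_2,H)$, and combining with part (ii) we land in $\Hom^{\bsp}(G\x K_2,H)$.

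The one subtlety to state carefully is why ``exactly one endpoint in $W$'' is the right condition for violated edges: if \emph{neither} endpoint of a violated edge $uv$ were swapped, the constraint for $G\x K_2$ would still read $p_1(u)p_2(v)\in E(H)$, which need not hold; and if \emph{both} were swapped it would read $p_2(u)p_1(v)\in E(H)$, which again is not guaranteed. Only swapping exactly one endpoint converts the ``mixed-coordinate'' requirement into one of the two ``same-coordinate'' requirements that $f$ already satisfies. So the hypothesis on $W$ is used in precisely one place, and the main (very mild) obstacle is just organizing the case analysis cleanly; there is no deep difficulty. Finally, for the reverse direction I would note that $f\in\Hom(G\x K_2,H)$ means $p_1(u)p_2(v)\in E(H)$ for all $uv\in E(G)$, and the identical argument — with the target constraints $p'_1(u)p'_1(v)\in E(H)$ and $p'_2(u)p'_2(v)\in E(H)$ of $\Hom(G\sqcup G,H)$ — goes through verbatim, since a violated edge with exactly one swapped endpoint turns a same-coordinate requirement into the mixed-coordinate one supplied by $f$.
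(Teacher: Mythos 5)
Your proof is correct and follows essentially the same route as the paper's: you separate the verification into (i) that the swapped labeling represents a valid homomorphism into the ``other'' graph, handled by a case split on safe vs.\ violated edges, and (ii) that the set of violated edges is unchanged so the bipartite swapping property persists — exactly the paper's two observations. The extra commentary on why ``exactly one endpoint in $W$'' is the right hypothesis is a nice addition but does not change the argument.
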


\begin{proof}
We check the $f \in \Hom^\bsp(G \sqcup G, H)$ case (the other one is analogous). Let $p = \pair(f)$ and $p' = \swap(p, W)$, so that $p'_i(v) = p_i(v)$ for all $v \notin W$ and $i \in \set{0,1}$, and $p'_i(v) = p'_{1-i}(v)$ for $v \in W$ and $i \in \set{0,1}$. We want to show that $p'$ represents an element of $\Hom^\bsp(G \x K_2, H)$. So we need to check that if $uv \in E(G)$, then $p'_i(u)p'_{1-i}(v) \in E(H)$ for $i \in \set{0,1}$. If $uv$ is safe with respect to $p$ (and hence $p'$ as well), then we automatically have $p'_i(u)p'_{1-i}(v) \in E(H)$. Otherwise, $uv$ is vioated, so exactly one of $u$ and $v$ is contained in $W$. Say $v \in W$. Then $p'_i(u)p'_{1-i}(v) = p_i(u)p_i(v)$, which is in $E(H)$ since $p$ represents an element of $\Hom(G \sqcup G, H)$. It follows that $p'$ represents an element of $\Hom(G \x K_2, H)$. Note that the set of violated edges is not affected by swapping, so $p'$ also has the bipartite swapping property, and hence represents an element of $\Hom^\bsp(G \x K_2, H)$.\end{proof}

Given $f \in \Hom^\bsp(G \sqcup G, H)$, the set of violated edges form a bipartite graph, but since there is no canonical bipartition, there may be many choices for $W$ as in the lemma. How do we consistently choose $W$ so that we have a bijection? The rest of the proof address this question.

\begin{proof}[Proof of the Proposition~\ref{prop:bst}]
For every $F \subseteq E(G)$ that forms a bipartite subgraph of $G$, choose $W_F \subseteq V(G)$ so that every edge in $F$ has exactly one endpoint in $W_F$. The specific choice of $W_F$ is unimportant; it just needs to be chosen once and for all.

Construct a bijection between $\Hom^\bsp(G \sqcup G, H)$ and $\Hom^\bsp(G \sqcup G, H)$ by sending $f\in \Hom^\bsp(G \sqcup G, H)$ to the element of $\Hom^\bsp(G \x K_2, H)$ represented by $\swap(\pair(f), W_{\viol(f)})$, where $\viol(f)$ denotes the set of violated edges of $f$. Lemma~\ref{lem:swap} guarantees that the image lends in $\Hom^\bsp(G \x K_2, H)$. For the inverse map, we note that the set of violated edges does not change, so that we can send $f' \in \Hom^\bsp(G \x K_2, H)$ to the element of $\Hom^\bsp(G \sqcup G, H)$ represented by $\swap(\pair(f'), W_{\viol(f')})$. This gives a bijection.
\end{proof}

\section{Bipartite swapping target} \label{sec:target}

In the previous section we saw that there exists a bijective correspondence between $\Hom^{\bsp}(G \sqcup G, H)$ and $\Hom^{\bsp}(G \x K_2, H)$. Sometimes it happens that every homomorphism in $\Hom(G \sqcup G, H)$ has the bipartite swapping property, and in this section we study such cases.

\subsection{Bipartite swapping target}

\begin{definition}[Bipartite swapping target] \label{def:target}
We say that a graph $H$ (not necessarily simple) is a \emph{bipartite swapping target} if $\Hom^{\bsp}(G \sqcup G, H) = \Hom(G \sqcup G, H)$ for every graph $G$, i.e., every homomorphism from $G \sqcup G$ to $H$ has the bipartite swapping property.
\end{definition}

\begin{remark}
If $H$ is a bipartite swapping target, then any induced subgraph of $H$ is also a bipartite swapping target. In other words, being a bipartite swapping target is a hereditary property.
\end{remark}

\begin{example}
Every bipartite graph $H$ is a bipartite swapping target, since $\Hom(G \sqcup G, H) = \emptyset$ unless $G$ is already bipartite.
\end{example}

\begin{example} \label{ex:odd-cycle-not-target}
An odd cycle $H = C_n$ is not a bipartite swapping target. Indeed, if the vertices of $C_n$ are given by elements of $\ZZ/n\ZZ$, with edges between $i$ and $i+1$, then the $H$-pair-labeling $i \mapsto (i, i+1)$ on $G = C_n$ represents an element of $\Hom(C_n \sqcup C_n, C_n)$ that has every edge of $G$ violated. The following diagram shows the example of a 5-cycle.
\begin{center}
\begin{tikzpicture}[sf,scale=.7]
	\begin{scope}
		\node[P, label=-54:{$(0,1)$}](0) at (-54:1) {};
		\node[P, label=18:{$(1,2)$}] (1) at (18:1) {};
		\node[P, label=90:{$(2,3)$}] (2) at (90:1) {};
		\node[P, label=162:{$(3,4)$}] (3) at (162:1) {};
		\node[P, label=234:{$(4,0)$}] (4) at (234:1) {};
		\draw[t] (0)--(1)--(2)--(3)--(4)--(0);
		\node at (0,-2) {$G = C_5$};
	\end{scope}
	
	\begin{scope}[xshift=6cm]
		\node[P, label=-54:{$0$}] (0) at (-54:1) {};
		\node[P, label=18:{$1$}] (1) at (18:1) {};
		\node[P, label=90:{$2$}] (2) at (90:1) {};
		\node[P, label=162:{$3$}] (3) at (162:1) {};
		\node[P, label=234:{$4$}] (4) at (234:1) {};
		\draw(0)--(1)--(2)--(3)--(4)--(0);
		\node at (0,-2) {$H = C_5$};
	\end{scope}
\end{tikzpicture}
\end{center}
\end{example}

\begin{proposition} \label{prop:injection}
If $H$ is a bipartite swapping target, then the bipartite swapping trick gives an injective map from $\Hom(G \sqcup G, H)$ to $\Hom(G \x K_2, H)$.
\end{proposition}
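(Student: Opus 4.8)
The plan is to combine Proposition~\ref{prop:bst} with the definition of a bipartite swapping target, and the only real content is to unravel what the two notions say. Since $H$ is a bipartite swapping target, Definition~\ref{def:target} tells us that every homomorphism in $\Hom(G \sqcup G, H)$ has the bipartite swapping property, i.e.\ $\Hom^{\bsp}(G \sqcup G, H) = \Hom(G \sqcup G, H)$. So the domain of the bijection supplied by Proposition~\ref{prop:bst} is already all of $\Hom(G \sqcup G, H)$.

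First I would invoke Proposition~\ref{prop:bst} to obtain a bijection $\Phi : \Hom^{\bsp}(G \sqcup G, H) \to \Hom^{\bsp}(G \x K_2, H)$ realized through the swapping operation (concretely, $f \mapsto \swap(\pair(f), W_{\viol(f)})$). Then I would substitute $\Hom^{\bsp}(G \sqcup G, H) = \Hom(G \sqcup G, H)$, using that $H$ is a bipartite swapping target, so that $\Phi$ is now defined on all of $\Hom(G \sqcup G, H)$. Finally I would note the trivial inclusion $\Hom^{\bsp}(G \x K_2, H) \subseteq \Hom(G \x K_2, H)$; composing $\Phi$ with this inclusion yields a map $\Hom(G \sqcup G, H) \to \Hom(G \x K_2, H)$ that is injective because $\Phi$ is a bijection onto its image and the inclusion is injective. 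This is exactly the claimed map.

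There is essentially no obstacle here: the statement is a packaging of the previous section's results, and the proof is a two-line deduction. The one point worth stating explicitly for the reader is that we do \emph{not} claim surjectivity onto $\Hom(G \x K_2, H)$ — in general $G \x K_2$ may admit homomorphisms to $H$ lacking the bipartite swapping property (indeed being a bipartite swapping target constrains homomorphisms out of $G \sqcup G$, not out of $G \x K_2$), so the inclusion in the last step need not be an equality. If one wanted, one could remark that when $G$ is bipartite the two graphs are isomorphic and the map is a bijection, but that is not needed for the statement.
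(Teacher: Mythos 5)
Your argument is correct and is exactly the paper's proof: identify $\Hom(G \sqcup G, H)$ with $\Hom^{\bsp}(G \sqcup G, H)$ using the bipartite swapping target hypothesis, apply the bijection from Proposition~\ref{prop:bst}, and then include $\Hom^{\bsp}(G \x K_2, H)$ into $\Hom(G \x K_2, H)$. The closing remark about surjectivity failing in general is accurate but not needed; otherwise there is nothing to add.
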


\begin{proof}
We have $\Hom(G \sqcup G, H) = \Hom^\bsp(G \sqcup G, H)$, which, by the bipartite swapping trick, is in bijective correspondence with $\Hom^\bsp(G \x K_2, H)$, which is a subset of of $\Hom(G \x K_2, H)$.
\end{proof}

\begin{corollary} \label{cor:target-ineq}
If $H$ is a bipartite swapping target, then $H$ is strongly GT (Definition~\ref{def:sGT}), and hence $H$ is GT (Definition~\ref{def:GT}).
\end{corollary}

\begin{proof}
Proposition \ref{prop:injection} implies that 
\[
\hom(G \sqcup G, H) \leq \hom(G \x K_2, H),
\]
for every $G$. Therefore $H$ is strongly GT.
\end{proof}

\subsection{Testing for bipartite swapping targets}
From Definition \ref{def:target} it seems that to determine whether $H$ is a bipartite swapping target, we have to check the condition for every $G$ and every homomorphism. Fortunately, there is an easy criterion for determining whether a graph is a bipartite swapping target which involves checking whether a particular subgraph of $H \x H$ is bipartite, as we shall explain in this section.

Construct the graph $H^\bst$ with vertices $V(H^\bst) = V(H) \x V(H)$, and an edge between $(u,v)$ and $(u',v') \in V(H^\bst)$ if and only if
\[
	uu' \in E(H), \quad \text{and} \quad
	vv' \in E(H), \quad \text{and} \quad
	\{ uv' \notin E(H) \text{ or } u'v \notin E(H) \}.
\]

\begin{proposition} \label{prop:bst-check}
A graph $H$ is a bipartite swapping target if and only if $H^\bst$ is bipartite.
\end{proposition}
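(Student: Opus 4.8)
The statement to prove is: $H$ is a bipartite swapping target if and only if $H^\bst$ is bipartite. The plan is to translate both sides of the equivalence into the same concrete combinatorial object, namely a graph homomorphism from an arbitrary $G$ into $H^\bst$. The key observation is that an $H$-pair-labeling $p = (p_0, p_1) : V(G) \to V(H) \times V(H)$ that represents an element of $\Hom(G \sqcup G, H)$ is \emph{exactly} the same data as a graph homomorphism $G \to H^\bst$, provided we remember that the edge relation of $H^\bst$ was cooked up precisely to detect violated edges: an edge $uv \in E(G)$ with endpoints labeled $(p_0(u), p_1(u))$ and $(p_0(v), p_1(v))$ satisfies $p_0(u)p_0(v) \in E(H)$ and $p_1(u)p_1(v) \in E(H)$ (this is what it means to be a homomorphism $G \sqcup G \to H$), and it is \emph{violated} (Definition~\ref{def:violated}) precisely when additionally $p_0(u)p_1(v) \notin E(H)$ or $p_1(u)p_0(v) \notin E(H)$ — which is exactly the condition for $(p_0(u),p_1(u))(p_0(v),p_1(v))$ to be an edge of $H^\bst$. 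So: \emph{the violated edges of $G$ with respect to $f \in \Hom(G \sqcup G, H)$ are precisely the preimages, under the associated homomorphism $G \to H^\bst$, of edges of $H^\bst$.}

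With this dictionary set up, I would argue both directions. For the ``if'' direction: suppose $H^\bst$ is bipartite, with a $2$-coloring $c : V(H^\bst) \to \{0,1\}$. Given any $G$ and any $f \in \Hom(G \sqcup G, H)$, let $\varphi : V(G) \to V(H^\bst)$ be the associated homomorphism. Then the violated edges of $G$ are among the edges mapped into $E(H^\bst)$ by $\varphi$, and since adjacent vertices of $H^\bst$ get different $c$-values, the composition $c \circ \varphi : V(G) \to \{0,1\}$ properly $2$-colors every violated edge. Hence the violated edges form a bipartite subgraph of $G$, i.e., $f$ has the bipartite swapping property. Since $G$ and $f$ were arbitrary, $H$ is a bipartite swapping target. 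For the ``only if'' direction (contrapositive): suppose $H^\bst$ is not bipartite, so it contains an odd cycle $C$. Take $G = C$ (an abstract $n$-cycle with $n$ odd), and let $f \in \Hom(G \sqcup G, H)$ be the pair-labeling given by walking around the odd cycle of $H^\bst$ — that is, $\varphi : V(G) \to V(H^\bst)$ is the homomorphism tracing $C$. Every edge of $G$ maps to an edge of $H^\bst$, hence every edge of $G$ is violated (one must double-check that \emph{all} of these edges really are violated and not merely mapped to edges of $H^\bst$ — but by the dictionary above an edge mapped into $E(H^\bst)$ is violated by definition). So the violated subgraph of $G$ is all of $G = C_n$, an odd cycle, which is not bipartite; thus $f$ does not have the bipartite swapping property and $H$ is not a bipartite swapping target. (This mirrors Example~\ref{ex:odd-cycle-not-target}, which is the special case $H = C_n$ yielding the odd cycle $(i,i+1)$ in $H^\bst$.)

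The main thing to be careful about — the only real obstacle — is verifying that the edge-relation of $H^\bst$ matches the notion of ``violated edge'' \emph{exactly}, i.e., handling the asymmetry in the pair-labeling correctly. A vertex $v \in V(G)$ carries the ordered pair $(f(v_0), f(v_1))$, and ``$uv$ safe'' requires $p_i(u)p_j(v) \in E(H)$ for \emph{all} four choices $i,j \in \{0,1\}$; two of these four ($i=j$) are automatic for a homomorphism $G \sqcup G \to H$, so violation reduces to the failure of at least one of the two ``crossed'' conditions $p_0(u)p_1(v) \in E(H)$, $p_1(u)p_0(v) \in E(H)$ — and this is symmetric in $u \leftrightarrow v$ (swapping $u$ and $v$ swaps the two crossed conditions), so it is well-defined as a property of the unordered edge $uv$ and matches the (symmetric) adjacency in $H^\bst$. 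Once this bookkeeping is pinned down, both directions are immediate from the dictionary, so the proof is short.
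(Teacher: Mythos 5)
Your proof is correct and follows essentially the same route as the paper: both rest on the dictionary that, via the map $\varphi\colon V(G)\to V(H)\times V(H)$ coming from the pair-labeling, the violated edges of $G$ with respect to $f\in\Hom(G\sqcup G,H)$ are exactly those carried into $E(H^\bst)$, after which bipartiteness of $H^\bst$ translates directly into the bipartite swapping property (the paper phrases this via odd closed walks in $H^\bst$, you phrase it via pulling back a $2$-coloring, but these are the same thing). One small terminological slip: $\varphi$ is a graph homomorphism into the tensor product $H\times H$, not into $H^\bst$ (safe edges map to non-edges of $H^\bst$), though your actual argument only uses the correct statement that violated edges are the preimages of $E(H^\bst)$, so nothing breaks.
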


\begin{proof}
To see whether $H$ is a bipartite swapping target, we only need to check that $\Hom(G \sqcup G, H) = \Hom^\bsp(G \sqcup G, H)$ for all odd cycles $G$. Indeed, if some $f \in \Hom(G \sqcup G, H)$ does not have the bipartite swapping property, then the set of violated edges with respect to $f$ contains some odd cycle $C_n$, and restriction to the cycle subgraph gives a homomorphism in $\Hom(C_n \sqcup C_n , H)$ that has all edges of $C_n$ violated.

An element in $\Hom(C_n \sqcup C_n , H)$ can be represented by closed walk of $n$ steps in $H \x H$ (i.e., through the $H$-pair-labeling). The step from $(u,v)$ to $(u',v')$ satisfies $uv \in E(H)$ and $u'v' \in E(H)$ since it is an edge of $H \x H$. Furthermore, it gives a violated edge in $C_n$ if and only if $uv' \notin E(H)$ or $u'v \notin E(H)$, and such edges form the subgraph $H^\bst \subset H\x H$. A homomorphism in $\Hom(C_n \sqcup C_n , H)$ fails to possess the bipartite swapping property if and only if there is a closed walk of $n$ steps in $H^\bst$. Checking over all odd $n$, we find that $H$ fails to possess the bipartite swapping property if and only if $H^\bst$ contains an odd cycle. The result follows.
\end{proof}

\begin{example} \label{ex:bst}
Here is a graph $H$ with $H^\bst$ drawn (indexed Cartesian-style as opposed to matrix-style). It is straightforward (although somewhat tedious) to construct the edges of $H^\bst$ using the rules given above. Note that $H^\bst$ is bipartite, so it follows that $H$ is a bipartite swapping target.
\begin{center}
\begin{tikzpicture}[sf,scale=1.5]
	\begin{scope}
		\node[P, label=below:{$a$}] (a) at (-1,0) {};
		\node[P, label=below:{$b$}] (b) at (0,0) {};
		\node[P, label=below:{$c$}] (c) at (1,0) {};
		\draw (c)--(b)--(a) edge[-,in = 135, out = 225, loop] ();
		\node at (0,-1) {$H$};
	\end{scope}
	
	\begin{scope}[xshift=4cm]
		\foreach \a/\x in {a/-1, b/0, c/1}{
			\foreach \b/\y in {a/-1, b/0, c/1}{
				\node[P, label=225:{$(\a,\b)$}] (\a\b) at (\x,\y) {};
			}
		}
		\draw (ac)--(ab)--(ba)--(cb)--(bc)--(ab);
		\draw (ba)--(ca);
		\node at (0,-2) {$H^\bst$};
	\end{scope}
\end{tikzpicture}
\end{center}
Extending this example, it turns out that if $H$ is a path with a single loop attached to either the first or the second vertex of the path, then $H^\bst$ is bipartite and thus $H$ is a bipartite swapping target. The following diagrams provide a proof-by-picture of this fact. The vertices of $H^\bst$ are drawn in the order following the example above, and they are colored black and white to show the bipartition.
\begin{center}
\tikzstyle{B}=[draw,circle, fill=white, minimum size=5pt,inner sep=0pt]
\tikzstyle{W}=[draw,circle, fill=black, minimum size=5pt,inner sep=0pt]

\begin{tikzpicture}[scale=.6,sf]

\begin{scope}
	\begin{scope}
		\foreach \i in {0,...,7}{
			\node[P] (\i) at (0,\i) {};
		}
		\draw (7)--(6)--(5)--(4)--(3)--(2)--(1)--(0) edge[-,in = 135, out = 225, loop, distance=1cm] (0);
		\node at (0,-1) {$H$};
	\end{scope}
	
	\begin{scope}[xshift=1.5cm]
		\node at (3.5,-1) {$H^{\bst}$};
		\node[B] (7 3) at (7,3) {};
		\node[W] (4 7) at (4,7) {};
		\node[W] (1 3) at (1,3) {};
		\node[W] (6 4) at (6,4) {};
		\node[B] (3 0) at (3,0) {};
		\node[B] (5 4) at (5,4) {};
		\node[W] (0 7) at (0,7) {};
		\node[B] (5 6) at (5,6) {};
		\node[B] (2 6) at (2,6) {};
		\node[B] (1 6) at (1,6) {};
		\node[B] (5 1) at (5,1) {};
		\node[W] (3 7) at (3,7) {};
		\node[W] (2 5) at (2,5) {};
		\node[W] (0 3) at (0,3) {};
		\node[B] (7 2) at (7,2) {};
		\node[W] (4 0) at (4,0) {};
		\node[B] (1 2) at (1,2) {};
		\node[W] (6 7) at (6,7) {};
		\node[B] (3 3) at (3,3) {};
		\node[W] (2 0) at (2,0) {};
		\node[B] (7 6) at (7,6) {};
		\node[B] (4 4) at (4,4) {};
		\node[W] (6 3) at (6,3) {};
		\node[W] (1 5) at (1,5) {};
		\node[B] (3 6) at (3,6) {};
		\node[B] (2 2) at (2,2) {};
		\node[B] (7 7) at (7,7) {};
		\node[W] (5 7) at (5,7) {};
		\node[B] (5 3) at (5,3) {};
		\node[W] (4 1) at (4,1) {};
		\node[B] (1 1) at (1,1) {};
		\node[W] (2 7) at (2,7) {};
		\node[B] (3 2) at (3,2) {};
		\node[B] (0 0) at (0,0) {};
		\node[B] (6 6) at (6,6) {};
		\node[B] (5 0) at (5,0) {};
		\node[B] (7 1) at (7,1) {};
		\node[W] (4 5) at (4,5) {};
		\node[B] (0 4) at (0,4) {};
		\node[B] (5 5) at (5,5) {};
		\node[B] (1 4) at (1,4) {};
		\node[W] (6 0) at (6,0) {};
		\node[B] (7 5) at (7,5) {};
		\node[W] (2 3) at (2,3) {};
		\node[W] (2 1) at (2,1) {};
		\node[W] (4 2) at (4,2) {};
		\node[B] (1 0) at (1,0) {};
		\node[W] (6 5) at (6,5) {};
		\node[W] (3 5) at (3,5) {};
		\node[W] (0 1) at (0,1) {};
		\node[B] (7 0) at (7,0) {};
		\node[B] (4 6) at (4,6) {};
		\node[B] (5 2) at (5,2) {};
		\node[W] (6 1) at (6,1) {};
		\node[B] (3 1) at (3,1) {};
		\node[B] (0 2) at (0,2) {};
		\node[B] (7 4) at (7,4) {};
		\node[B] (0 6) at (0,6) {};
		\node[W] (6 2) at (6,2) {};
		\node[W] (4 3) at (4,3) {};
		\node[W] (1 7) at (1,7) {};
		\node[W] (0 5) at (0,5) {};
		\node[B] (3 4) at (3,4) {};
		\node[B] (2 4) at (2,4) {};
		\draw (7 3) -- (6 4);
		\draw (7 3) -- (6 2);
		\draw (4 7) -- (5 6);
		\draw (4 7) -- (3 6);
		\draw (1 3) -- (2 4);
		\draw (1 3) -- (0 2);
		\draw (1 3) -- (0 4);
		\draw (6 4) -- (7 5);
		\draw (6 4) -- (5 3);
		\draw (3 0) -- (2 0);
		\draw (3 0) -- (4 1);
		\draw (3 0) -- (2 1);
		\draw (3 0) -- (4 0);
		\draw (5 4) -- (4 5);
		\draw (5 4) -- (6 3);
		\draw (5 4) -- (6 5);
		\draw (5 4) -- (4 3);
		\draw (0 7) -- (0 6);
		\draw (0 7) -- (1 6);
		\draw (5 6) -- (4 5);
		\draw (5 6) -- (6 7);
		\draw (5 6) -- (6 5);
		\draw (2 6) -- (1 5);
		\draw (2 6) -- (3 5);
		\draw (2 6) -- (3 7);
		\draw (2 6) -- (1 7);
		\draw (1 6) -- (2 7);
		\draw (1 6) -- (2 5);
		\draw (1 6) -- (0 5);
		\draw (5 1) -- (4 2);
		\draw (5 1) -- (6 2);
		\draw (5 1) -- (6 0);
		\draw (5 1) -- (4 0);
		\draw (3 7) -- (4 6);
		\draw (2 5) -- (3 4);
		\draw (2 5) -- (3 6);
		\draw (2 5) -- (1 4);
		\draw (0 3) -- (1 2);
		\draw (0 3) -- (0 2);
		\draw (0 3) -- (1 4);
		\draw (0 3) -- (0 4);
		\draw (7 2) -- (6 3);
		\draw (7 2) -- (6 1);
		\draw (4 0) -- (3 1);
		\draw (4 0) -- (5 0);
		\draw (1 2) -- (0 1);
		\draw (1 2) -- (2 3);
		\draw (1 2) -- (2 1);
		\draw (6 7) -- (7 6);
		\draw (2 0) -- (1 0);
		\draw (2 0) -- (3 1);
		\draw (7 6) -- (6 5);
		\draw (6 3) -- (7 4);
		\draw (6 3) -- (5 2);
		\draw (1 5) -- (0 6);
		\draw (1 5) -- (2 4);
		\draw (1 5) -- (0 4);
		\draw (3 6) -- (2 7);
		\draw (3 6) -- (4 5);
		\draw (5 7) -- (4 6);
		\draw (5 3) -- (4 2);
		\draw (5 3) -- (6 2);
		\draw (4 1) -- (3 2);
		\draw (4 1) -- (5 2);
		\draw (4 1) -- (5 0);
		\draw (3 2) -- (2 3);
		\draw (3 2) -- (4 3);
		\draw (3 2) -- (2 1);
		\draw (5 0) -- (6 1);
		\draw (5 0) -- (6 0);
		\draw (7 1) -- (6 2);
		\draw (7 1) -- (6 0);
		\draw (4 5) -- (3 4);
		\draw (0 4) -- (0 5);
		\draw (1 4) -- (2 3);
		\draw (1 4) -- (0 5);
		\draw (6 0) -- (7 0);
		\draw (2 3) -- (3 4);
		\draw (2 1) -- (1 0);
		\draw (4 2) -- (3 1);
		\draw (1 0) -- (0 1);
		\draw (6 5) -- (7 4);
		\draw (3 5) -- (2 4);
		\draw (3 5) -- (4 6);
		\draw (0 1) -- (0 2);
		\draw (7 0) -- (6 1);
		\draw (5 2) -- (6 1);
		\draw (5 2) -- (4 3);
		\draw (0 6) -- (0 5);
		\draw (0 6) -- (1 7);
		\draw (4 3) -- (3 4);
	\end{scope}
\end{scope}

\begin{scope}[xshift=12cm]
	\begin{scope}
		\foreach \i in {0,...,7}{
			\node[P] (\i) at (0,\i) {};
		}
		\draw (7)--(6)--(5)--(4)--(3)--(2)--(1)  edge[-,in = 135, out = 225, loop, distance=1cm] (1)--(0);
		\node at (0,-1) {$H$};
	\end{scope}
	
	\begin{scope}[xshift=1.5cm]
		\node at (3.5,-1) {$H^{\bst}$};
		\node[B] (7 3) at (7,3) {};
		\node[W] (4 7) at (4,7) {};
		\node[W] (1 3) at (1,3) {};
		\node[W] (6 4) at (6,4) {};
		\node[B] (3 0) at (3,0) {};
		\node[B] (5 4) at (5,4) {};
		\node[W] (0 7) at (0,7) {};
		\node[B] (5 6) at (5,6) {};
		\node[B] (2 6) at (2,6) {};
		\node[B] (1 6) at (1,6) {};
		\node[B] (5 1) at (5,1) {};
		\node[W] (3 7) at (3,7) {};
		\node[W] (2 5) at (2,5) {};
		\node[W] (0 3) at (0,3) {};
		\node[B] (7 2) at (7,2) {};
		\node[W] (4 0) at (4,0) {};
		\node[B] (1 2) at (1,2) {};
		\node[W] (6 7) at (6,7) {};
		\node[B] (3 3) at (3,3) {};
		\node[W] (2 0) at (2,0) {};
		\node[B] (7 6) at (7,6) {};
		\node[B] (4 4) at (4,4) {};
		\node[W] (6 3) at (6,3) {};
		\node[W] (1 5) at (1,5) {};
		\node[B] (3 6) at (3,6) {};
		\node[B] (2 2) at (2,2) {};
		\node[B] (7 7) at (7,7) {};
		\node[W] (5 7) at (5,7) {};
		\node[B] (5 3) at (5,3) {};
		\node[W] (4 1) at (4,1) {};
		\node[B] (1 1) at (1,1) {};
		\node[W] (2 7) at (2,7) {};
		\node[B] (3 2) at (3,2) {};
		\node[B] (0 0) at (0,0) {};
		\node[B] (6 6) at (6,6) {};
		\node[B] (5 0) at (5,0) {};
		\node[B] (7 1) at (7,1) {};
		\node[W] (4 5) at (4,5) {};
		\node[B] (0 4) at (0,4) {};
		\node[B] (5 5) at (5,5) {};
		\node[B] (1 4) at (1,4) {};
		\node[W] (6 0) at (6,0) {};
		\node[B] (7 5) at (7,5) {};
		\node[W] (2 3) at (2,3) {};
		\node[W] (2 1) at (2,1) {};
		\node[W] (4 2) at (4,2) {};
		\node[B] (1 0) at (1,0) {};
		\node[W] (6 5) at (6,5) {};
		\node[W] (3 5) at (3,5) {};
		\node[W] (0 1) at (0,1) {};
		\node[B] (7 0) at (7,0) {};
		\node[B] (4 6) at (4,6) {};
		\node[B] (5 2) at (5,2) {};
		\node[W] (6 1) at (6,1) {};
		\node[B] (3 1) at (3,1) {};
		\node[B] (0 2) at (0,2) {};
		\node[B] (7 4) at (7,4) {};
		\node[B] (0 6) at (0,6) {};
		\node[W] (6 2) at (6,2) {};
		\node[W] (4 3) at (4,3) {};
		\node[W] (1 7) at (1,7) {};
		\node[W] (0 5) at (0,5) {};
		\node[B] (3 4) at (3,4) {};
		\node[B] (2 4) at (2,4) {};
		\draw (7 3) -- (6 4);
		\draw (7 3) -- (6 2);
		\draw (4 7) -- (5 6);
		\draw (4 7) -- (3 6);
		\draw (1 3) -- (1 2);
		\draw (1 3) -- (2 4);
		\draw (1 3) -- (0 2);
		\draw (1 3) -- (1 4);
		\draw (1 3) -- (0 4);
		\draw (6 4) -- (7 5);
		\draw (6 4) -- (5 3);
		\draw (3 0) -- (4 1);
		\draw (3 0) -- (2 1);
		\draw (5 4) -- (4 5);
		\draw (5 4) -- (6 3);
		\draw (5 4) -- (6 5);
		\draw (5 4) -- (4 3);
		\draw (0 7) -- (1 6);
		\draw (5 6) -- (4 5);
		\draw (5 6) -- (6 7);
		\draw (5 6) -- (6 5);
		\draw (2 6) -- (1 5);
		\draw (2 6) -- (3 5);
		\draw (2 6) -- (3 7);
		\draw (2 6) -- (1 7);
		\draw (1 6) -- (2 7);
		\draw (1 6) -- (1 5);
		\draw (1 6) -- (0 5);
		\draw (1 6) -- (1 7);
		\draw (1 6) -- (2 5);
		\draw (5 1) -- (6 1);
		\draw (5 1) -- (6 0);
		\draw (5 1) -- (6 2);
		\draw (5 1) -- (4 2);
		\draw (5 1) -- (4 1);
		\draw (5 1) -- (4 0);
		\draw (3 7) -- (4 6);
		\draw (2 5) -- (3 4);
		\draw (2 5) -- (3 6);
		\draw (2 5) -- (1 4);
		\draw (0 3) -- (1 2);
		\draw (0 3) -- (1 4);
		\draw (7 2) -- (6 3);
		\draw (7 2) -- (6 1);
		\draw (4 0) -- (3 1);
		\draw (1 2) -- (0 1);
		\draw (1 2) -- (2 3);
		\draw (1 2) -- (2 1);
		\draw (6 7) -- (7 6);
		\draw (2 0) -- (3 1);
		\draw (7 6) -- (6 5);
		\draw (6 3) -- (7 4);
		\draw (6 3) -- (5 2);
		\draw (1 5) -- (1 4);
		\draw (1 5) -- (0 6);
		\draw (1 5) -- (0 4);
		\draw (1 5) -- (2 4);
		\draw (3 6) -- (2 7);
		\draw (3 6) -- (4 5);
		\draw (5 7) -- (4 6);
		\draw (5 3) -- (4 2);
		\draw (5 3) -- (6 2);
		\draw (4 1) -- (3 2);
		\draw (4 1) -- (3 1);
		\draw (4 1) -- (5 0);
		\draw (4 1) -- (5 2);
		\draw (3 2) -- (2 3);
		\draw (3 2) -- (4 3);
		\draw (3 2) -- (2 1);
		\draw (5 0) -- (6 1);
		\draw (7 1) -- (6 1);
		\draw (7 1) -- (6 2);
		\draw (7 1) -- (6 0);
		\draw (4 5) -- (3 4);
		\draw (1 4) -- (2 3);
		\draw (1 4) -- (0 5);
		\draw (2 3) -- (3 4);
		\draw (2 1) -- (1 0);
		\draw (2 1) -- (3 1);
		\draw (4 2) -- (3 1);
		\draw (1 0) -- (0 1);
		\draw (6 5) -- (7 4);
		\draw (3 5) -- (2 4);
		\draw (3 5) -- (4 6);
		\draw (7 0) -- (6 1);
		\draw (5 2) -- (6 1);
		\draw (5 2) -- (4 3);
		\draw (0 6) -- (1 7);
		\draw (4 3) -- (3 4);
	\end{scope}
\end{scope}

\end{tikzpicture}
\end{center}
On the other hand, the following graph $H$ is not a bipartite swapping target, since $H^\bst$ is not bipartite (an odd cycle is highlighted). Any graph containing $H$ as an induced subgraph is thus also not a bipartite swapping target.
\begin{center}
\begin{tikzpicture}[scale=.6,sf]

	\begin{scope}
		\foreach \i in {0,...,4}{
			\node[P] (\i) at (0,\i) {};
		}
		\draw (4)--(3)--(2) edge[-,in = 135, out = 225, loop, distance=1cm] (2)--(1)--(0);
		\node at (0,-1) {$H$};
	\end{scope}
	
	\begin{scope}[xshift=1.5cm]
		\node at (2,-1) {$H^{\bst}$};
		\node[P] (1 3) at (1,3) {};
		\node[P] (3 0) at (3,0) {};
		\node[P] (2 1) at (2,1) {};
		\node[P] (0 3) at (0,3) {};
		\node[P] (4 0) at (4,0) {};
		\node[P] (1 2) at (1,2) {};
		\node[P] (3 3) at (3,3) {};
		\node[P] (4 4) at (4,4) {};
		\node[P] (2 2) at (2,2) {};
		\node[P] (4 1) at (4,1) {};
		\node[P] (1 1) at (1,1) {};
		\node[P] (3 2) at (3,2) {};
		\node[P] (0 0) at (0,0) {};
		\node[P] (0 4) at (0,4) {};
		\node[P] (1 4) at (1,4) {};
		\node[P] (2 3) at (2,3) {};
		\node[P] (4 2) at (4,2) {};
		\node[P] (1 0) at (1,0) {};
		\node[P] (0 1) at (0,1) {};
		\node[P] (3 1) at (3,1) {};
		\node[P] (2 4) at (2,4) {};
		\node[P] (2 0) at (2,0) {};
		\node[P] (4 3) at (4,3) {};
		\node[P] (3 4) at (3,4) {};
		\node[P] (0 2) at (0,2) {};
		\draw (1 3) -- (2 4);
		\draw (1 3) -- (0 2);
		\draw (1 3) -- (0 4);
		\draw (3 0) -- (4 1);
		\draw (3 0) -- (2 1);
		\draw (2 1) -- (1 2);
		\draw (2 1) -- (2 0);
		\draw (2 1) -- (1 0);
		\draw (2 1) -- (3 2);
		\draw (0 3) -- (1 2);
		\draw (0 3) -- (1 4);
		\draw (4 0) -- (3 1);
		\draw (1 2) -- (0 1);
		\draw (1 2) -- (2 3);
		\draw (1 2) -- (0 2);
		\draw (4 1) -- (3 2);
		\draw (3 2) -- (2 3);
		\draw (3 2) -- (4 3);
		\draw (3 2) -- (4 2);
		\draw (1 4) -- (2 3);
		\draw (2 3) -- (3 4);
		\draw (2 3) -- (2 4);
		\draw (4 2) -- (3 1);
		\draw (1 0) -- (0 1);
		\draw (3 1) -- (2 0);
		\draw (4 3) -- (3 4);
		\draw[t] (0 2) -- (1 2) -- (2 3) -- (2 4) -- (1 3) --  (0 2);
	\end{scope}

\end{tikzpicture}
\end{center}
\end{example}

\subsection{Threshold graphs} \label{sec:4c}

Bipartite swapping targets at first seem like rather elusive objects, and we are left wondering whether there are many graphs that are bipartite swapping targets. In this section we provide a simple sufficient condition for bipartite swapping targets, thereby presenting a large useful family of such graphs.

\begin{definition} \label{def:4c}
Let $H$ be a graph (not necessarily simple). An \emph{alternating 4-circuit} is a sequence $a, b, c, d \in V(H)$ (not necessarily distinct), such that $ab, cd \in E(H)$, and $bc, da \notin E(H)$.
\end{definition}

\begin{proposition} \label{prop:4c-bst}
Let $H$ be a graph (not necessarily simple). Suppose $H$ has no alternating 4-circuit, then $H$ is a bipartite swapping target.
\end{proposition}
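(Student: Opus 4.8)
The plan is to use Proposition~\ref{prop:bst-check}: it suffices to show that if $H$ has no alternating 4-circuit, then the auxiliary graph $H^\bst$ is bipartite, i.e.\ that $H^\bst$ contains no odd cycle. Equivalently, I want to show that every closed walk in $H^\bst$ has even length. Recall an edge of $H^\bst$ from $(u,v)$ to $(u',v')$ requires $uu' \in E(H)$, $vv' \in E(H)$, and $uv' \notin E(H)$ or $u'v \notin E(H)$.

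The key idea is to find a ``potential'' function on $V(H^\bst) = V(H) \times V(H)$ whose parity changes along every edge of $H^\bst$; then no closed walk can have odd length and we are done. A natural candidate is the indicator of the diagonal-type relation: consider the partition of $V(H)\times V(H)$ according to whether $uv \in E(H)$ or not (here $uv$ may be a loop when $u=v$). I would like to claim that every edge of $H^\bst$ joins a pair $(u,v)$ with $uv \in E(H)$ to a pair $(u',v')$ with $u'v' \notin E(H)$, which would exhibit $H^\bst$ as a bipartite graph with the two sides being these two classes. To verify this, suppose $(u,v)(u',v')$ is an edge of $H^\bst$ with, say, $u'v \notin E(H)$ (the case $uv'\notin E(H)$ is symmetric after swapping the roles of the two endpoints of the $H^\bst$-edge, which is legitimate since the $H^\bst$ adjacency condition is symmetric in $\{(u,v),(u',v')\}$). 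Now look at the four vertices $v, u', v', u$ of $H$: we have $vu' \notin E(H)$, and we want to derive that not both $uv \in E(H)$ and $u'v' \in E(H)$ can hold simultaneously --- but we also know $uu' \in E(H)$ and $vv' \in E(H)$ from the edge condition. Running the no-alternating-4-circuit hypothesis on an appropriate cyclic arrangement of these vertices should force exactly one of $uv$, $u'v'$ to be an edge.

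Concretely: assume for contradiction that $(u,v)$ and $(u',v')$ are \emph{both} in the same class. If both satisfy $uv, u'v' \in E(H)$: apply Definition~\ref{def:4c} to the sequence $(a,b,c,d) = (v, u, u', v')$. We have $ab = vu \in E(H)$ and $cd = u'v' \in E(H)$ by assumption; and $bc = uu' \in E(H)$ --- wait, I need $bc \notin E(H)$, so this arrangement is wrong. Instead take $(a,b,c,d)=(u, v, v', u')$: then $ab = uv \in E(H)$, $cd = v'u' \in E(H)$, and I need $bc = vv' \notin E(H)$ and $da = u'u \notin E(H)$; but both are in $E(H)$. So the ``both in $E(H)$'' configuration is not directly the obstruction --- rather, I should use the case analysis on which of $uv'$, $u'v$ fails. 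With $u'v \notin E(H)$ known, take $(a,b,c,d) = (u', v, v', u)$ to test whether $u'v', uv$ can coexist: $ab = u'v$ --- not an edge, wrong slot again. The right sequence is $(a,b,c,d) = (v, v', u, u')$ if $u'v\notin E(H)$: then $ab = vv' \in E(H)$, $cd = uu' \in E(H)$, $bc = v'u$, $da = u'v \notin E(H)$; so no alternating 4-circuit forces $v'u \in E(H)$, i.e.\ $uv' \in E(H)$. Combined with $u'v\notin E(H)$ this is consistent with the $H^\bst$ edge condition but gives no contradiction about $uv, u'v'$ --- so the two-class partition as stated is \emph{not} correct, and I will instead need a finer invariant.

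Therefore the main obstacle, and the real content of the proof, is identifying the correct bipartition of $H^\bst$. My refined plan: define, for each vertex $(u,v)$ of $H^\bst$, a colour depending on more than just whether $uv\in E(H)$ --- perhaps tracking, along a walk, a quantity like the number of ``crossings'' and showing directly by induction on walk length that returning to the start forces even length. Specifically I would argue that along any walk $(u_0,v_0), (u_1,v_1), \dots$ in $H^\bst$, the pair $(u_k, v_k)$ relates to $(u_0, v_0)$ in a controlled way, using that each step swaps which coordinate is ``ahead'' in a manner governed by the absent edges; the no-alternating-4-circuit condition is exactly what prevents an odd number of such swaps from closing up. I expect this bookkeeping --- making precise the invariant that distinguishes the two sides and checking it flips across every $H^\bst$-edge via a single clean application of Definition~\ref{def:4c} --- to be the crux, after which bipartiteness of $H^\bst$, and hence the conclusion via Proposition~\ref{prop:bst-check}, follows immediately.
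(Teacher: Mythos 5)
There is a genuine gap. You correctly pick the right strategy (show $H^\bst$ is bipartite and invoke Proposition~\ref{prop:bst-check}), and you correctly discover that the candidate bipartition ``$(u,v)$ with $uv\in E(H)$'' versus ``$(u,v)$ with $uv\notin E(H)$'' does not work --- indeed even for $H=H_1$ the only edge of $H^\bst$ is $(0,1)(1,0)$, and both endpoints have $uv\in E(H)$, so that partition fails outright. But after ruling it out, you never produce an actual bipartition; the closing paragraph about a ``finer invariant,'' ``tracking crossings,'' and ``induction on walk length'' is aspirational hand-waving rather than an argument, and the crucial step --- identifying the invariant and proving it flips across every $H^\bst$-edge --- is exactly what is missing.

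The correct bipartition is not an intrinsic property of the pair $(u,v)$ but a \emph{local} one, sensitive to the neighbors in $H^\bst$: take
\[
  W = \bigl\{(u,v)\in V(H^\bst) : uv'\notin E(H) \text{ for some } (u,v)(u',v')\in E(H^\bst)\bigr\},
\]
i.e.\ put $(u,v)$ in $W$ if \emph{its own first coordinate} witnesses a missing cross-edge to \emph{some} neighbor's second coordinate. Every $H^\bst$-edge has at least one endpoint in $W$ by the defining condition of $H^\bst$. If both endpoints $(u,v)$ and $(u',v')$ of some $H^\bst$-edge were in $W$, one may assume $uv'\notin E(H)$; since $(u',v')\in W$ there is a further edge $(u',v')(u'',v'')$ of $H^\bst$ with $u'v''\notin E(H)$. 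Then $uu'\in E(H)$, $v''v'\in E(H)$, $u'v''\notin E(H)$, $v'u\notin E(H)$, so $(u,u',v'',v')$ is an alternating 4-circuit, contradiction. Hence every edge of $H^\bst$ crosses $W$ and its complement, so $H^\bst$ is bipartite. Note this is a single clean application of the no-alternating-4-circuit hypothesis, but to a four-tuple drawn from \emph{three} $H^\bst$-vertices along a length-2 path, not from the two endpoints of a single edge as your case analysis kept trying; that is why your cyclic arrangements of $u,v,u',v'$ never yielded a contradiction.
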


\begin{proof}
We use Proposition \ref{prop:bst-check}. In $H^\bst$, let
\[
	W = \{ (u,v) \in V(H^\bst) = V(H) \x V(H) : uv' \notin E(H) \text{ for some } (u,v)(u',v') \in E(H^\bst) \}.
\]
We claim that every edge of $H^\bst$ has exactly one endpoint in $W$. For edge $e = (u,v)(u',v') \in E(H^\bst)$, by definition either $uv' \notin E(H)$ or $u'v \notin E(H)$, so at least one of the endpoints of $e$ is in $W$. Now suppose that both endpoints of $e$ are in $W$. Without loss of generality assume that $uv' \notin E(H)$. Since $(u', v') \in W$, we have $u'v'' \notin E(H)$ for some $(u', v')(u'', v'') \in E(H^\bst)$. Then $uv', u' v'' \notin E(H)$ and $uu', v'v'' \in E(H)$, so that $u, u', v'', v'$ is an alternating 4-circuit of $H$, contradiction. Therefore, every edge of $H^\bst$ has exactly one endpoint in $W$ and hence $H^\bst$ is bipartite.
\end{proof}

\begin{remark}
The graph $H$ in Example~\ref{ex:bst} has an alternating 4-circuit (namely $b,c,c,b$), but it is still a bipartite swapping target. Thus the converse of Proposition~\ref{prop:4c-bst} is false.
\end{remark}

Now we construct a family of graphs which have no alternating 4-circuits and are hence bipartite swapping targets.

\begin{definition}[Threshold graphs] \label{def:H}
Let $A$ be a finite (multi)set of real numbers, and $t$ be some ``threshold'' constant. Let $H_{A,t}$ denote the graph with $A$ as the vertices, and an edge between $x,y \in A$ (possibly $x=y$) if and only if $x+y \leq t$. We call such graphs \emph{threshold graphs}. When $A = \{0, 1, \dots, n\}$ and $t = n$, we write $H_n$ for $H_{A, t}$.  
\end{definition}

\begin{lemma} \label{lem:threshold-4c}
For any $A$ and $t$, the graph $H_{A, t}$ has no alternating $4$-circuit.
\end{lemma}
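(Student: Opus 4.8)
The plan is to argue by contradiction: suppose $H_{A,t}$ has an alternating $4$-circuit $a,b,c,d$, so that $ab, cd \in E(H_{A,t})$ but $bc, da \notin E(H_{A,t})$. Translating through the definition of a threshold graph, the two edge conditions say $a+b \le t$ and $c+d \le t$, while the two non-edge conditions say $b+c > t$ and $d+a > t$. I would then simply add the two inequalities in each pair: from the edges, $a+b+c+d \le 2t$; from the non-edges, $a+b+c+d = (b+c)+(d+a) > 2t$. This is a contradiction, so no alternating $4$-circuit can exist.

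The only subtlety worth a sentence is the handling of the possibly-repeated vertices: Definition~\ref{def:4c} allows $a,b,c,d$ to coincide, and Definition~\ref{def:H} allows loops (the case $x=y$), but since the defining relation $x+y \le t$ is symmetric and well-defined for $x=y$, the four inequalities above are valid regardless of which of $a,b,c,d$ happen to be equal as elements of $A$ — we only ever use the numerical values. So the addition step goes through verbatim.

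I do not expect any real obstacle here; the statement is essentially immediate once one writes out what ``edge'' and ``non-edge'' mean in a threshold graph, and the key (and only) idea is that summing the edge inequalities and summing the non-edge inequalities give contradictory bounds on the same quantity $a+b+c+d$. Combined with Proposition~\ref{prop:4c-bst}, this lemma will then yield that every threshold graph is a bipartite swapping target, which is the intended application.
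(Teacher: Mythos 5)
Your proof is correct and is essentially identical to the paper's: translate the edge and non-edge conditions of the alternating $4$-circuit into the inequalities $a+b\le t$, $c+d\le t$, $b+c>t$, $d+a>t$, then sum each pair to get contradictory bounds on $a+b+c+d$. The extra remark about repeated vertices is a fine sanity check but not needed, since the argument only uses the numerical values.
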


\begin{proof}
An alternating 4-circuit $a, b, c, d \in V(H_{A, t}) = A$ in $H_{A, t}$ must satisfy $a + b \leq t$, $c + d \leq t$, $b + c > t$, $d + a > t$. The sum of the first two inequalities give $a + b + c + d \leq 2t$ while the sum of the last two inequalities give $a + b + c + d > 2t$, which is impossible.
\end{proof}

Note that a graph homomorphism in $\Hom(G, H_{A, t})$ corresponds to assigning each vertex of $G$ some ``state'' represented by a real number in $A$, so that the sum of the states of the two endpoints of an edge never exceeds some threshold. This interpretation allows us to prove the result about generalized independent sets stated in Section \ref{summary-gen-indep}.

\begin{proof}[Proof of Theorem \ref{thm:i}]
The key observation is that $\Hom(G, H_n) \cong \c I(G, n)$ (defined in Section \ref{summary-gen-indep}). Then Theorem \ref{thm:i} is equivalent to the statement that $H_n$ is GT, which is true since $H_n$ has no alternating 4-circuit, and hence is a bipartite swapping target.
\end{proof}

The statement at the end of Section \ref{summary-gen-indep} about assignments $f : V \to A$ follows analogously by using $H = H_{A, t}$.

Next we give a complete characterization of all threshold graphs. It turns out that they are precisely the class of graphs without alternating 4-circuits.

\begin{theorem}[Characterization of threshold graphs] \label{thm:4c}
Let $H$ be a graph (allowing loops) with $n$ vertices. The following are equivalent:
\begin{enumerate}
	\item[(a)] $H$ has no alternating 4-circuit.
	\item[(b)] $H$ is isomorphic to some threshold graph $H_{A,t}$.
	\item[(c)] The vertices of $H$ can be ordered in a way so that the set of positions of the $1$'s in the adjacency matrix of $H$ form a self-conjugate Young diagram (English style).
	\item[(d)] The vertices of $H$ can be ordered as $v_1, \dots, v_{n}$ so that $N(v_1) \supseteq N(v_2) \supseteq \cdots \supseteq N(v_n)$, where $N(v)$ denotes the set of neighbors of $v$.
\end{enumerate}
\end{theorem}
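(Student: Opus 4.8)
The plan is to prove the equivalences via the cycle of implications $(b)\Rightarrow(a)$, $(a)\Rightarrow(d)$, $(d)\Rightarrow(c)$, and $(c)\Rightarrow(b)$, since each of these is either already available or a short structural observation. The implication $(b)\Rightarrow(a)$ is exactly Lemma~\ref{lem:threshold-4c}, so nothing further is needed there.

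For $(a)\Rightarrow(d)$, I would show that the ``no alternating 4-circuit'' condition forces the neighborhoods to be totally ordered by inclusion. Suppose two vertices $u, w$ had incomparable neighborhoods, so there exist $b \in N(u)\setminus N(w)$ and $d \in N(w)\setminus N(u)$. Then $ub, wd \in E(H)$ while $uw$-type non-edges $ud, wb \notin E(H)$; setting $(a,b,c,d) = (u, b, w, d)$ (being careful about the loop/degenerate cases, e.g.\ when some of these vertices coincide) gives an alternating 4-circuit, a contradiction. Hence the neighborhoods are linearly ordered by $\supseteq$, and we can list the vertices as $v_1, \dots, v_n$ accordingly; this is exactly~(d). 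The only subtlety is handling loops, i.e.\ allowing $v\in N(v)$, and ties between vertices with equal neighborhoods, but the argument is robust to these — one just needs to check the alternating 4-circuit definition permits repeated vertices, which it does.

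For $(d)\Rightarrow(c)$: order the vertices as in~(d), and consider the adjacency matrix $M$ with $M_{ij} = 1$ iff $v_iv_j \in E(H)$. Since $N(v_1)\supseteq\cdots\supseteq N(v_n)$, if $M_{ij}=1$ and $i'\le i$, $j'\le j$ then $v_{j}\in N(v_i)\subseteq N(v_{i'})$, so $v_{j'}\in N(v_{i'})$ as well (using $j' \le j$ and monotonicity in the second coordinate, which follows since $M$ is symmetric). Thus the set of $1$-positions is closed under moving up and to the left, i.e.\ it is a Young diagram in English notation; symmetry of $M$ makes it self-conjugate. For $(c)\Rightarrow(b)$: given a self-conjugate Young diagram with row lengths $\lambda_1 \ge \lambda_2 \ge \cdots \ge \lambda_n$, assign to vertex $v_i$ the real number $a_i$ chosen so that $a_i + a_j \le t$ exactly when the $(i,j)$ cell lies in the diagram. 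Concretely, pick a decreasing sequence $a_1 > a_2 > \cdots > a_n$ and threshold $t$ such that $a_i + a_j \le t \iff j \le \lambda_i$; this is possible because the staircase shape of a (self-conjugate) Young diagram means the cutoff $\lambda_i$ is nonincreasing in $i$, so one can interpolate real numbers realizing these thresholds (e.g.\ choose the $a_i$ inductively with enough room). Then $H \cong H_{A,t}$ with $A = \{a_1,\dots,a_n\}$.

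I expect the main obstacle to be the bookkeeping around degenerate cases rather than any deep idea: in $(a)\Rightarrow(d)$ one must make sure the alternating 4-circuit $a,b,c,d$ one extracts is genuinely one under Definition~\ref{def:4c} even when the four vertices are not distinct (e.g.\ $u = d$ or $b = w$), and in $(c)\Rightarrow(b)$ one must verify that a concrete choice of real numbers $a_i$ and threshold $t$ realizing the prescribed staircase thresholds actually exists — this is where a clean inductive construction, or an explicit formula such as $a_i = -\,(\text{something increasing in }i)$ with $t$ chosen at the end, keeps the argument honest. Everything else is routine translation between the four languages (forbidden configuration, weighted threshold, Young diagram, nested neighborhoods).
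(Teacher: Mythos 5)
Your cycle of implications matches the paper's exactly — $(b)\Rightarrow(a)$ is Lemma~\ref{lem:threshold-4c}, and your $(a)\Rightarrow(d)$, $(d)\Rightarrow(c)$, $(c)\Rightarrow(b)$ coincide in substance with the paper's chain $(a)\Rightarrow(d)\Rightarrow(c)\Rightarrow(b)$, with $(a)\Rightarrow(d)$ argued via the same ``incomparable neighborhoods give an alternating $4$-circuit'' observation (the paper packages it as an ordering by degree, but it is the same idea). One real error to fix in $(c)\Rightarrow(b)$: a \emph{decreasing} sequence $a_1 > a_2 > \cdots > a_n$ cannot realize $a_i + a_j \le t \iff j \le \lambda_i$, because for decreasing $a$ the set $\{j : a_i + a_j \le t\}$ is an upper tail $\{j : j \ge \text{cutoff}\}$, not the prefix $\{1,\dots,\lambda_i\}$ that the Young diagram requires; you want an \emph{increasing} sequence. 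The paper's explicit choice $a_i = i - r_i$ (with $r_i$ the $i$-th row length) and threshold $t = 0$ does exactly this: since $r_i$ is nonincreasing, $a_{i+1} - a_i = 1 + (r_i - r_{i+1}) \ge 1$, and entry $(i,j)$ is a $1$ iff $j \le r_i$ and $i \le r_j$ iff $a_i + a_j \le 0$, which both realizes the isomorphism and removes the hand-waving about ``interpolating with enough room.''
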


\begin{remark}
The condition in (c) means that the adjacency matrix of $H$ has the property that, whenever an entry is $1$, all the entries above and/or to the left of it are all $1$'s. Self-conjugate means that matrix is symmetric, which is automatic for undirected graphs. Here is an example of a matrix satisfying (c):
	\[
		\begin{pmatrix}
			1&1&1&1&1&0\\
			1&1&1&0&0&0\\
			1&1&0&0&0&0\\
			1&0&0&0&0&0\\
			1&0&0&0&0&0\\
			0&0&0&0&0&0
		\end{pmatrix}.
	\]
Figure \ref{fig:4c-ex} shows all isomorphism classes of graphs with up to 3 vertices satisfying the conditions of Theorem \ref{thm:4c}.
\end{remark}

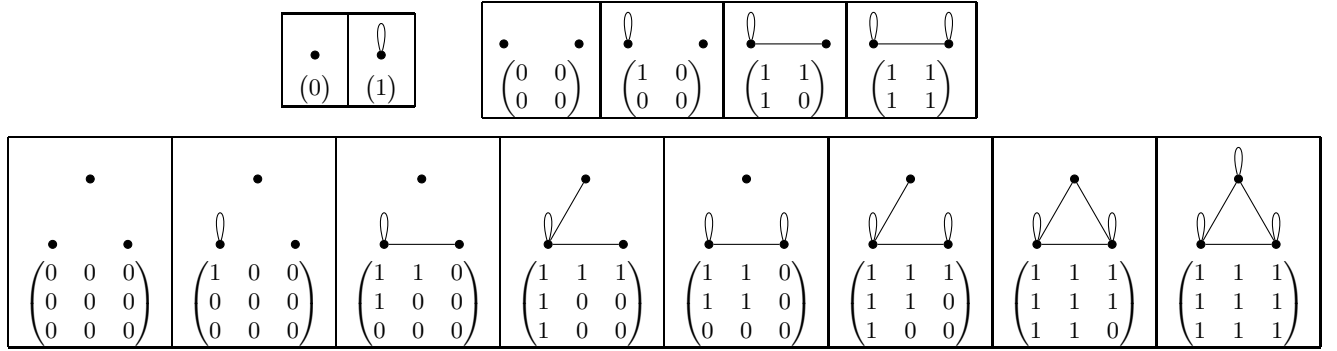
\begin{figure}[ht!]
\centering

\begin{tabular}{|c|c|}
\hline
\begin{tikzpicture}
	\node[P] (A) at (0,0) {};
\end{tikzpicture}
&
\begin{tikzpicture}
	\node[P] (A) at (0,0) {};
	\draw (A) edge[loop above] ();
\end{tikzpicture}
\\
\footnotesize $\begin{pmatrix} 0 \end{pmatrix}$ &
\footnotesize $\begin{pmatrix} 1 \end{pmatrix}$ \\
\hline
\end{tabular}
\qquad
\begin{tabular}{|c|c|c|c|}
\hline
\begin{tikzpicture}
	\node[P] (A) at (0,0) {};
	\node[P] (B) at (1,0) {};
\end{tikzpicture}
&
\begin{tikzpicture}
	\node[P] (A) at (0,0) {};
	\node[P] (B) at (1,0) {};
	\draw (A) edge[loop above] ();
\end{tikzpicture}
&
\begin{tikzpicture}
	\node[P] (A) at (0,0) {};
	\node[P] (B) at (1,0) {};
	\draw (A) edge[loop above] ();
	\draw (A) -- (B);
\end{tikzpicture}
&
\begin{tikzpicture}
	\node[P] (A) at (0,0) {};
	\node[P] (B) at (1,0) {};
	\draw (A) edge[loop above] ();
	\draw (B) edge[loop above] ();
	\draw (A) -- (B);
\end{tikzpicture}
\\
\footnotesize $\begin{pmatrix} 0&0\\0&0 \end{pmatrix}$ &
\footnotesize $\begin{pmatrix} 1&0\\0&0 \end{pmatrix}$ &
\footnotesize $\begin{pmatrix} 1&1\\1&0 \end{pmatrix}$ &
\footnotesize $\begin{pmatrix} 1&1\\1&1 \end{pmatrix}$ \\ \hline
\end{tabular}

\medskip

\begin{tabular}{|c|c|c|c|c|c|c|c|}
\hline
\begin{tikzpicture}
	\node[P] (A) at (0,0) {};
	\node[P] (B) at (1,0) {};
	\node[P] (C) at (0.5, 0.87) {};
\end{tikzpicture}
&
\begin{tikzpicture}
	\node[P] (A) at (0,0) {};
	\node[P] (B) at (1,0) {};
	\node[P] (C) at (0.5, 0.87) {};
	\draw (A) edge[loop above] ();
\end{tikzpicture}
&
\begin{tikzpicture}
	\node[P] (A) at (0,0) {};
	\node[P] (B) at (1,0) {};
	\node[P] (C) at (0.5, 0.87) {};
	\draw (A) edge[loop above] ();
	\draw (A) -- (B);
\end{tikzpicture}
&
\begin{tikzpicture}
	\node[P] (A) at (0,0) {};
	\node[P] (B) at (1,0) {};
	\node[P] (C) at (0.5, 0.87) {};
	\draw (A) edge[loop above] ();
	\draw (A) -- (B);
	\draw (A) -- (C);	
\end{tikzpicture}
&
\begin{tikzpicture}
	\node[P] (A) at (0,0) {};
	\node[P] (B) at (1,0) {};
	\node[P] (C) at (0.5, 0.87) {};
	\draw (A) edge[loop above] ();
	\draw (B) edge[loop above] ();
	\draw (A) -- (B);
\end{tikzpicture}
&
\begin{tikzpicture}
	\node[P] (A) at (0,0) {};
	\node[P] (B) at (1,0) {};
	\node[P] (C) at (0.5, 0.87) {};
	\draw (A) edge[loop above] ();
	\draw (B) edge[loop above] ();
	\draw (A) -- (B); \draw (A)--(C);
\end{tikzpicture}
&
\begin{tikzpicture}
	\node[P] (A) at (0,0) {};
	\node[P] (B) at (1,0) {};
	\node[P] (C) at (0.5, 0.87) {};
	\draw (A) edge[loop above] ();
	\draw (B) edge[loop above] ();
	\draw (A) -- (B); \draw (A)--(C); \draw (B)--(C);
\end{tikzpicture}
&
\begin{tikzpicture}
	\node[P] (A) at (0,0) {};
	\node[P] (B) at (1,0) {};
	\node[P] (C) at (0.5, 0.87) {};
	\draw (A) edge[loop above] ();
	\draw (B) edge[loop above] ();
	\draw (C) edge[loop above] ();
	\draw (A) -- (B); \draw (A)--(C); \draw (B)--(C);
\end{tikzpicture}
\\
\footnotesize $\begin{pmatrix} 0&0&0\\0&0&0\\0&0&0 \end{pmatrix}$ &
\footnotesize $\begin{pmatrix} 1&0&0\\0&0&0\\0&0&0 \end{pmatrix}$ &
\footnotesize $\begin{pmatrix} 1&1&0\\1&0&0\\0&0&0 \end{pmatrix}$ &
\footnotesize $\begin{pmatrix} 1&1&1\\1&0&0\\1&0&0 \end{pmatrix}$ &
\footnotesize $\begin{pmatrix} 1&1&0\\1&1&0\\0&0&0 \end{pmatrix}$ &
\footnotesize $\begin{pmatrix} 1&1&1\\1&1&0\\1&0&0 \end{pmatrix}$ &
\footnotesize $\begin{pmatrix} 1&1&1\\1&1&1\\1&1&0 \end{pmatrix}$ &
\footnotesize $\begin{pmatrix} 1&1&1\\1&1&1\\1&1&1 \end{pmatrix}$ \\
\hline
\end{tabular}
\caption{All graphs with up to 3 vertices satisfying Theorem~\ref{thm:4c} and their adjacency matrices.\label{fig:4c-ex}} 
\end{figure}

\begin{proof}[Proof of Theorem~\ref{thm:4c}] We will show that (a)$\Leftarrow$(b)$\Leftarrow$(c)$\Leftarrow$(d)$\Leftarrow$(a). The implication (b)$\Rightarrow$(a) has already been established in Lemma~\ref{lem:threshold-4c}. 


(c)$\Rightarrow$(b): 
Start with an adjacency matrix satisfying (c). Let $r_i$ denote the number of $1$'s in the $i$-th row. Let $a_i = i - r_i$, and let $A$ denote the multiset $\set{a_1, a_2, \dots, a_n}$. We claim that $H$ is isomorphic to $H_{A,0}$, where $a_i \in A$ corresponds to the vertex represented by the $i$-th row of the matrix. Indeed, if the $(i,j)$ entry in the matrix is $1$, then $i \leq r_j$ and $j \leq r_i$, so that $a_i + a_j = i - r_i + j - r_j \leq 0$. Otherwise, the $(i, j)$ entry is $0$, so $i > r_j$ and $j > r_i$, and hence $a_i + a_j = i - r_i + j - r_j > 0$.

(d)$\Rightarrow$(c): Suppose that (d) holds. We claim that the adjacency matrix of $H$ with respect to the vertex ordering $v_1, \dots, v_n$ satisfies (c). It suffices to show that if the entry $(i,j)$ of adjacency matrix is $1$ (denoting $v_iv_j \in E(H)$), then every entry directly above or directly to the left of it is $1$. Due to symmetry, we only need to consider the entries above $(i,j)$. For $k < j$, we have $N(v_k) \supseteq N(v_j) \ni v_i$, so $v_iv_k \in E(H)$ and hence the entry at $(i,k)$ is $1$. This shows that (c) is satisfied.

(a)$\Rightarrow$(d): Suppose that $H$ has no alternating 4-circuit. Order the vertices by decreasing degree, so that $\abs{N(v_1)} \geq \abs{N(v_2)} \geq \cdots \geq \abs{N(v_n)}$. We claim that (d) is satisfied for this ordering. Suppose not, so that $N(v_i) \nsupseteq N(v_j)$ for some $i < j$. Since $\abs{N(v_i)} \geq \abs{N(v_j)}$, we have $N(v_i) \nsubseteq N(v_j)$ as well. Let $x \in N(v_i) \setminus N(v_j)$ and $y \in N(v_j) \setminus N(v_i)$. Then $v_i, x, v_j, y$ is an alternating 4-circuit. Contradiction. Therefore, (d) is satisfied.
\end{proof}

We conclude this section with an enumerative result about threshold graphs, thereby showing the abundunce of bipartite swapping targets.

\begin{proposition} \label{prop:4c-enum}
There are exactly $\binom{n}{k}$ isomorphism classes of threshold graphs $n$ vertices and have exactly $k$ loops, and there are exactly $2^n$ isomorphism classes of threshold graphs on $n$ vertices.
\end{proposition}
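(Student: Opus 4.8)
The plan is to use the characterization of threshold graphs in Theorem~\ref{thm:4c}, specifically condition (c), to identify isomorphism classes of threshold graphs on $n$ vertices with self-conjugate Young diagrams contained in the $n\times n$ square, and then to count the latter by the size of their Durfee square (which will turn out to be the number of loops).

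First I would pin down the bijection. Given a threshold graph $H$ on $n$ vertices, order its vertices $v_1,\dots,v_n$ by non-increasing $\abs{N(v_i)}$ (a loop at $v$ placing $v\in N(v)$, so it contributes $1$ to $\abs{N(v)}$); the proof of Theorem~\ref{thm:4c} (namely (a)$\Rightarrow$(d)$\Rightarrow$(c)) shows this ordering realizes condition (c), so the $1$'s of the adjacency matrix in this order form a Young diagram whose $i$-th row has $r_i=\abs{N(v_i)}$ cells. Hence the associated partition $\lambda(H)=(r_1\ge r_2\ge\cdots\ge r_n)$ is just the non-increasing degree sequence of $H$, which is manifestly an isomorphism invariant; it is self-conjugate (the shape of a symmetric $0$-$1$ matrix arranged as a Young diagram is stable under transposition) and it fits inside the $n\times n$ box. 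Conversely, every self-conjugate partition $\lambda\subseteq(n^n)$ is the shape of a symmetric $0$-$1$ matrix satisfying (c), hence the adjacency matrix of a threshold graph, and the edge rule ``$i\sim j\iff j\le\lambda_i$'' recovers that graph from $\lambda$ up to isomorphism. So $H\mapsto\lambda(H)$ is a bijection from isomorphism classes of threshold graphs on $n$ vertices onto self-conjugate partitions $\lambda\subseteq(n^n)$. Moreover the number of loops of $H$ equals the number of diagonal cells of $\lambda$, and for a self-conjugate partition this is exactly the side length $d(\lambda)$ of its Durfee square.

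It remains to count self-conjugate $\lambda\subseteq(n^n)$ with $d(\lambda)=k$. Here I would invoke the classical bijection sending a self-conjugate $\lambda$ with Durfee square of side $k$ to the (decreasing) list of hook lengths of its $k$ diagonal cells: these are $k$ distinct positive odd integers $h_1>\cdots>h_k$, with $h_i=2(\lambda_i-i)+1$, and the map is invertible. The constraint $\lambda\subseteq(n^n)$ is equivalent to $\lambda_1\le n$, i.e.\ $h_1\le 2n-1$; since the positive odd integers $\le 2n-1$ are exactly $1,3,\dots,2n-1$, of which there are $n$, the number of admissible selections is $\binom{n}{k}$. This gives the first assertion, and summing over $k=0,1,\dots,n$ yields $\sum_{k}\binom{n}{k}=2^n$ isomorphism classes in total.

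The step I expect to be the main obstacle is the first one: checking carefully that $\lambda(H)$ is well defined (that \emph{every} vertex ordering realizing (c) yields the same diagram, which follows because the row sums of a top-left-justified $0$-$1$ matrix are automatically non-increasing and equal the multiset of degrees) and that $\lambda$ is a \emph{complete} isomorphism invariant of threshold graphs (so that distinct self-conjugate diagrams in the box give non-isomorphic graphs). Once the correspondence is established as a genuine bijection, the remaining ingredients—the diagonal-hook bijection for self-conjugate partitions, the translation of the box constraint into $h_1\le 2n-1$, and the binomial identity—are entirely routine.
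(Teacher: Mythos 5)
Your proof is correct, and it shares the paper's key reduction: both use characterization~(c) of Theorem~\ref{thm:4c} to identify isomorphism classes of threshold graphs on $n$ vertices with self-conjugate Young diagrams fitting in the $n\times n$ box, with the number of loops equal to the number of diagonal cells (the Durfee square side). Where you diverge is in the final enumeration. The paper simply reads off the boundary of the Young diagram from a corner up to the main diagonal as a monotone lattice path of $n$ unit steps, of which $k$ are of one type, yielding $\binom{n}{k}$ directly; since the diagram is symmetric about the diagonal, the half-boundary already determines it. You instead invoke the classical Frobenius-type bijection sending a self-conjugate partition to the set of its diagonal hook lengths, $k$ distinct odd integers, and translate the box constraint into $h_1 \le 2n-1$ to get a $k$-subset of an $n$-element set. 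Both bijections are standard and both are correct; the paper's is slightly more economical (no hook-length machinery needed), while yours is perhaps more transparent about why the answer is a binomial coefficient of a natural $k$-subset and also makes the well-definedness of the partition invariant fully explicit, a point the paper leaves implicit.
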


\begin{proof}
Using characterization (c) of Theorem \ref{thm:4c}, we see that graphs with $k$ loops correspond bijectively to paths on the Euclidean lattice $(0,0)$ to $(k, n-k)$ using steps $(1,0)$ and $(0,1)$ (that is, consider the boundary between the 0's and the 1's up to the diagonal of the matrix) and there are exactly $\binom{n}{k}$ such walks. The second statement follows from summing over all $k$.
\end{proof}

\begin{remark}
The $\binom{n}{k}$ classes can be constructed by arranging $k$ looped vertices and $n-k$ non-looped vertices in a row, and then connecting every looped vertex to all the vertices on its right.
\end{remark}


\section{Counting graph colorings} \label{sec:coloring}

When $H = K_q$, the set $\Hom(G, K_q)$ is in bijective correspondence with proper vertex colorings of $G$ with $q$ colors corresponding to the vertices of $K_q$. The number of proper $q$-colorings of $G$ is equal to $P(G, q) = \hom(G, K_q)$, the chromatic polynomial of $G$. As discussed in Section \ref{sec:summary-coloring}, we suspect that $K_q$ is GT, so that $P(G, q) \leq P(K_{d,d}, q)^{N/(2d)}$. Unfortunately, when $q \geq 3$, $K_q$ is not a bipartite swapping target, since it contains an induced triangle, which is not a bipartite swapping target by Example \ref{ex:odd-cycle-not-target}. Nevertheless, we still suspect that $K_q$ is strongly GT.

\begin{conjecture} \label{conj:K_q-strongly-GT}
$K_q$ is strongly GT.
\end{conjecture}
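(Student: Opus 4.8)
The plan is to route everything through the bipartite swapping trick. By Proposition~\ref{prop:bst} we have $\hom^{\bsp}(G \sqcup G, K_q) = \hom^{\bsp}(G \x K_2, K_q)$ for every $G$. Writing $b(G')$ for the number of homomorphisms $G' \to K_q$ whose $K_q$-pair-labeling on $G$ fails the bipartite swapping property, so that $\hom(G', K_q) = \hom^{\bsp}(G', K_q) + b(G')$ for $G' \in \set{G \sqcup G,\, G \x K_2}$, we get
\[
	\hom(G \x K_2, K_q) - \hom(G \sqcup G, K_q) = b(G \x K_2) - b(G \sqcup G),
\]
so it suffices to prove $b(G \sqcup G) \le b(G \x K_2)$ for every graph $G$. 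Concretely, $b(G\sqcup G)$ counts pairs $(f_0,f_1)$ of proper $q$-colorings of $G$ for which $\set{uv \in E(G) : f_0(u)=f_1(v) \text{ or } f_1(u)=f_0(v)}$ is non-bipartite, and $b(G\x K_2)$ counts pairs $(g_0,g_1)$ of maps $V(G)\to V(K_q)$ with $g_i(u)\neq g_{1-i}(v)$ for all $uv\in E(G)$ and $i\in\set{0,1}$, for which $\set{uv \in E(G) : g_0(u)=g_0(v) \text{ or } g_1(u)=g_1(v)}$ is non-bipartite. Note that, unlike in the bipartite swapping property itself, one cannot transport a coloring counted by $b(G\sqcup G)$ to one counted by $b(G\x K_2)$ by a single swap, precisely because its violated set is non-bipartite; so a genuinely new correspondence is required.

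A first step --- already enough for Theorem~\ref{thm:asym-coloring} --- is to compare the two quantities as polynomials in $q$. If $G$ is bipartite both vanish, so assume $G$ has a shortest odd cycle $C$, of length $\gamma$. The cheapest way to force a non-bipartite set of violated edges is to make every edge of $C$ violated. On the $G\sqcup G$ side this pins down $\gamma$ ``link'' equalities between the $f_0$- and $f_1$-values along $C$; for a consistent orientation of $C$ these form a perfect matching on the $2\gamma$ color slots, costing exactly $\gamma$ degrees of freedom, so $b(G\sqcup G)$, as a polynomial in $q$, has degree $2\abs{V(G)} - \gamma$. On the $G\x K_2$ side, making every edge of $C$ violated inside one color class forces $g_0$ (say) to be constant on $V(C)$, merging $\gamma$ slots of that class into a single value and costing only $\gamma - 1$ degrees of freedom, so $b(G\x K_2)$ has degree $2\abs{V(G)} - \gamma + 1$. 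One is counting a union of events of fixed codimension, so there is no cancellation at the top degree, and $b(G\x K_2) - b(G\sqcup G)$ is a polynomial of degree $2\abs{V(G)} - \gamma + 1$ with positive leading coefficient, hence positive for all large $q$. Together with Theorem~\ref{thm:GT} applied to the $d$-regular bipartite graph $G\x K_2$, this gives Theorem~\ref{thm:asym-coloring}.

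To obtain the full conjecture one must control all the lower-order terms, i.e.\ prove $b(G\sqcup G) \le b(G\x K_2)$ for every $q$. One natural attempt is a direct injection: given $(f_0,f_1)$ counted by $b(G\sqcup G)$, pick a canonical shortest odd cycle $C$ in its violated set, ``cross'' all edges of $G$ except along $C$, and then adjust the colors near $C$ so that the resulting coloring of $G\x K_2$ has a non-bipartite violated set --- arranging that $C$ and the adjustment can be recovered from the image. The main obstacle --- and the reason this is still only a conjecture --- is that such an adjustment need not exist when $q$ is small: a vertex of $C$ may have neighbours off $C$ that already occupy the colors one needs, so the ``spare'' degree of freedom guaranteed on average by the degree count above is not available pointwise. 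Circumventing this seems to require either a globally coordinated modification that simultaneously repairs the coloring away from $C$, or --- more plausibly --- replacing the explicit injection by a sieve, an inclusion--exclusion (M\"obius inversion over the poset of ``forced-violated'' edge sets) that matches the two counts term by term. I expect this uniform comparison for small $q$ to be the crux.
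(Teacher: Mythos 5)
This statement is a conjecture, and the paper does not prove it; you correctly treat it as open and offer only a partial sketch plus a discussion of the obstruction. What you do prove---an asymptotic comparison giving $b(G\sqcup G)\le b(G\times K_2)$ for large $q$, hence Theorem~\ref{thm:asym-coloring}---matches exactly the partial result the paper establishes toward this conjecture (Proposition~\ref{prop:color-asymp-bipartite} and Theorem~\ref{thm:asym-coloring}).

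Your asymptotic sketch is the paper's argument repackaged. The paper expands $P(\cdot,q)=\sum_i \hom^\surj(\cdot,K_i)\binom{q}{i}$ and shows, in Lemmas~\ref{lem:cycle-violate1}--\ref{lem:hom-surj-inject}, that for $i\ge 2N-t+2$ every surjective homomorphism on both sides has the bipartite swapping property (so those coefficients agree via the bijection of Proposition~\ref{prop:bst}), while at $i=2N-t+1$ only $G\times K_2$ admits a non-bsp surjective coloring (color one shortest odd cycle monochromatically in one factor), and $G\sqcup G$ admits none at all. Your degree count for $b(G\sqcup G)$ (the $\gamma$ ``link'' equalities forming a matching on $2\gamma$ slots, costing $\gamma$) is Lemma~\ref{lem:cycle-violate1} in another guise, and the $\gamma-1$ cost on the $G\times K_2$ side is Lemma~\ref{lem:surj-bsp2}. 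Your informal ``no cancellation at the top degree'' is made rigorous in the paper precisely by working in the basis $\binom{q}{i}$ with coefficients $\hom^\surj(\cdot,K_i)$, which are manifestly nonnegative; this also lets the paper extract the explicit threshold $q\ge(2N)^{2N+2}$ (and the remark records Lazebnik's later improvement), whereas your sketch leaves ``sufficiently large'' quantitative. Finally, your closing suggestion that a term-by-term comparison over a poset of violated edge sets might give the full conjecture is stated in the paper as Conjecture~\ref{conj:surj-coloring}, which is also left open. In short: no gap relative to the paper, but also---correctly---no proof, since none is known.
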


Note that Conjecture \ref{conj:K_q-strongly-GT} implies Conjecture \ref{conj:coloring}. Since $K_q$ is not a bipartite swapping target, we cannot directly apply the bipartite swapping trick. However, it turns out that we can still use the bipartite swapping trick to prove an asymptotic version of Conjecture~\ref{conj:K_q-strongly-GT}. Here is the main result of this section.

\begin{proposition} \label{prop:color-asymp-bipartite}
Let $G$ be a graph with $N$ vertices. Then $P(G \sqcup G, q) \leq P(G \x K_2, q)$ for $q \geq (2N)^{2N + 2}$.
\end{proposition}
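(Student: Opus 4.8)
The plan is to leverage the bipartite swapping trick (Proposition~\ref{prop:bst}), which already furnishes a bijection between $\Hom^{\bsp}(G \sqcup G, K_q)$ and $\Hom^{\bsp}(G \x K_2, K_q)$. Since $K_q$ is not a bipartite swapping target, not every homomorphism in $\Hom(G \sqcup G, K_q)$ lies in the $\bsp$-subset, so we cannot immediately conclude $\hom(G \sqcup G, K_q) \le \hom(G \x K_2, K_q)$. Instead, I would bound the \emph{number} of homomorphisms that fail the bipartite swapping property and show that, for $q$ large, this count is dwarfed by the total. Concretely, from the bijection we get
\[
    P(G \sqcup G, q) = \hom^{\bsp}(G \sqcup G, K_q) + \bigl(\hom(G \sqcup G, K_q) - \hom^{\bsp}(G \sqcup G, K_q)\bigr) \le P(G \x K_2, q) + \Delta,
\]
where $\Delta$ is the number of proper $q$-colorings of $G \sqcup G$ lacking the bipartite swapping property. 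So it suffices to prove $\Delta$ is strictly smaller than the ``gap'' $P(G \x K_2, q) - P(G \sqcup G, q)$ whenever that gap is positive, and handle the case where the gap is zero separately.

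The key estimates are as follows. First, I would show $\Delta$ is small: if $f \in \Hom(G \sqcup G, K_q)$ fails the bipartite swapping property, then its set of violated edges contains an odd cycle $C \subseteq G$ (this is exactly the observation used in the proof of Proposition~\ref{prop:bst-check}). Fix one such odd cycle $C$ in $G$ — there are at most $N^N$ (crudely) choices of vertex sequences forming a closed odd walk in $G$, but more simply at most, say, $2^{2N}$ subsets times orderings, so a constant $c(G)$ depending only on $G$. Once $C$ is fixed, a coloring with every edge of $C$ violated is quite constrained: the pair-labeling restricted to $C$ traces a closed walk in $K_q^{\bst}$, and crucially, along such a walk consecutive labels $(u,v), (u',v')$ must satisfy $u v' \notin E(K_q)$ or $u'v \notin E(K_q)$, i.e., $u = v'$ or $u' = v$ (since $K_q$ has no loops, non-adjacency means equality). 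This rigidity forces the restriction of $f$ to $V(C) \sqcup V(C)$ into $O(q^{|C|})$ configurations rather than the generic $q^{2|C|}$ or so, and the rest of $G \sqcup G$ contributes at most $q^{2N}$. The upshot is a bound of the shape $\Delta \le c(G)\, q^{2N - 1}$ (the saving of one power of $q$ coming from the constraint along $C$ — this is the delicate point to get right).

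Second, I would lower-bound the gap. When $G$ has an odd cycle (if $G$ is bipartite, $G\sqcup G \cong G \x K_2$ and the inequality is an equality), the graph $G \x K_2$ has strictly more ``room'': $G \x K_2$ is bipartite while $G \sqcup G$ is not, so the leading behavior of $P(G \x K_2, q)$ as a polynomial in $q$ strictly exceeds that of $P(G \sqcup G, q)$. Both are monic of degree $2N$ with the same subleading coefficient $-|E(G\sqcup G)| = -|E(G\x K_2)| = -2|E(G)|$, so I should compare the $q^{2N-2}$ coefficients; by Whitney's theorem (or the standard expansion $P(G,q) = \sum_{S \subseteq E} (-1)^{|S|} q^{c(S)}$) this coefficient is $\binom{2|E|}{2} - t_3$ where $t_3$ counts triangles, and $G \sqcup G$ has triangles exactly when $G$ does while $G \x K_2$ is triangle-free. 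Hence $P(G \x K_2, q) - P(G \sqcup G, q)$ is a polynomial in $q$ with positive leading term of degree $2N - 2$, so it exceeds $c(G) q^{2N-1}/q$-type bounds once $q$ is polynomially large in $N$; chasing the constants (all of which are at most $2^{\mathrm{poly}(N)}$, comfortably below $(2N)^{2N+2}$) gives the stated threshold. \textbf{The main obstacle} is the second estimate — establishing that $\Delta = o$ of the gap requires the saving of \emph{strictly more than one} power of $q$ in $\Delta$ relative to the $q^{2N-2}$ growth of the gap, OR (more robustly) reworking the argument so that every coloring counted in $\Delta$ is matched to a coloring of $G \x K_2$ that is \emph{not} in the image of the $\bsp$-bijection, thereby absorbing $\Delta$ into honest members of $\Hom(G \x K_2, K_q)$ rather than competing against the gap; making the bookkeeping of ``which colorings of $G \x K_2$ are hit'' precise enough to run this cleanly, while keeping the $q$-threshold at $(2N)^{2N+2}$, is where the real care is needed.
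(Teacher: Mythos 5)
Your degree analysis of $P(G\times K_2,q)-P(G\sqcup G,q)$ is incorrect for triangle-free non-bipartite $G$, and this is the genuine gap. You locate the first discrepancy between the two chromatic polynomials at the $q^{2N-2}$ coefficient, which by the subgraph expansion measures (twice) the triangle count of $G$; but if $G$ has odd girth $t\geq 5$ (e.g.\ $G=C_5$), both $G\sqcup G$ and $G\times K_2$ are triangle-free, those coefficients agree, and the first disagreement only shows up at a lower degree — you would then have to compare Whitney coefficients of degree $2N-3, 2N-4,\ldots$, which the alternating-sign subgraph expansion does not make tractable. (There is also a framing wrinkle at the start: asking that $\Delta$ be smaller than the "gap" $P(G\times K_2,q)-P(G\sqcup G,q)$ does not help, since the gap is exactly the quantity to be shown nonnegative and the inequality $P(G\sqcup G,q)\leq P(G\times K_2,q)+\Delta$ only bounds it from below by $-\Delta$; you appear to sense this and pivot to matching non-$\bsp$ colorings into the non-$\bsp$ complement on the other side, which is the right instinct, but it is never carried out.)

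The paper avoids the monomial basis entirely. It writes $P(\cdot,q)=\sum_i\hom^\surj(\cdot,K_i)\binom{q}{i}$, so the coefficients are the nonnegative counts of \emph{surjective} homomorphisms, and Lemma~\ref{lem:coef-compare} shows these agree for $i\geq 2N-t+2$ and differ strictly in the favorable direction precisely at $i=2N-t+1$, where $t$ is the odd girth. The engine behind this is exactly your "rigidity along a violated odd cycle" observation, made quantitative in Lemmas~\ref{lem:cycle-violate1} and~\ref{lem:cycle-violate2} (a violated odd $\ell$-cycle caps the number of colors used at $2N-\ell$), so that the bipartite swapping trick restricted to $\Hom^\surj$ gives a bijection for $i\geq 2N-t+2$ and a strictly non-surjective injection at $i=2N-t+1$ (Lemma~\ref{lem:hom-surj-inject}). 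The tail $i\leq 2N-t$ is bounded crudely by $i^{2N}\binom{q}{i}$ and absorbed by the $\binom{q}{2N-t+1}$ term once $q\geq(2N)^{2N+2}$. Working in the falling-factorial basis with $\hom^\surj$ coefficients is what lets the argument track the odd girth $t$ uniformly, rather than stalling at $t=3$ as the Whitney-coefficient comparison does.
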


Before we prove Proposition~\ref{prop:color-asymp-bipartite}, let us deduce Theorem~\ref{thm:asym-coloring} from the Proposition.

\begin{proof}[Proof of Theorem~\ref{thm:asym-coloring}]
From Proposition \ref{prop:color-asymp-bipartite}, we have $P(G, q)^2 = P(G \sqcup G, q) \leq P(G \x K_2, q)$ for sufficiently large $q$. Theorem \ref{thm:GT} implies that $P(G \x K_2, q) = \hom(G \x K_2, K_q) \leq \hom(K_{d,d}, K_q)^{N/d} = P(K_{d,d}, K_q)^{N/d}$ for all $q$. Theorem~\ref{thm:asym-coloring} then follows from combining the two inequalities.
\end{proof}

\begin{remark}
After the initial draft of this paper was written, F.~Lazebnik observed (personal communication to the author via D.~Galvin) that $P(G, q) \leq P(K_{d,d},q)^{N/(2d)}$ whenever $N/(2d)$ is an integer and $q > 2 \binom{nd/2}{4}$, thereby improving the lower bound on $q$ in Theorem~\ref{thm:asym-coloring} at least in the case when $N$ is divisible by $2d$. This proof uses a completely different method from this paper, and is inspired by Lazebnik's \cite{Lazebnik91} use of the Whitney broken circuit characterization of the chromatic polynomial.
\end{remark}

Let $\Hom^\surj(G, H)$ denote the subset of $\Hom(G, H)$ containing homomorphisms whose maps of vertices $V(G) \to V(H)$ is surjective. Also let $\hom^\surj(G, H) = \abs{\Hom^\surj(G, H)}$. We know that
\[
	P(G, q) = \sum_{i=0}^{\abs{V(G)}} \hom^\surj(G, K_i) \binom{q}{i}.
\]
Indeed, if exactly $i$ colors are used in the coloring, then there are $\binom{q}{i}$ ways to choose the $i$ colors used, and $\hom^\surj(G, K_i)$ ways to color $G$ using all $i$ colors. Now
\begin{align}
	P(G \sqcup G, q) &= \sum_{i=0}^{2N} \hom^\surj(G \sqcup G, K_i) \binom{q}{i},  \label{eq:chrompoly1}\\
	\text{and}  \qquad P(G \x K_2, q) &= \sum_{i=0}^{2N} \hom^\surj(G \x K_2, K_i) \binom{q}{i}. \label{eq:chrompoly2}
\end{align}
From playing with small examples, it seems that the $P(G\sqcup G, q) \leq P(G \x K_2, q)$ holds even when \eqref{eq:chrompoly1} and \eqref{eq:chrompoly2} are compared term-by-term. We state this as a conjecture. Observe that Conjecture~\ref{conj:K_q-strongly-GT} follows from this stronger conjecture.

\begin{conjecture} \label{conj:surj-coloring}
  If $G$ is a simple graph, then for all positive integers $i$,
\[
\hom^\surj(G \sqcup G, K_i) \leq \hom^\surj(G \x K_2, K_i).
\]
\end{conjecture}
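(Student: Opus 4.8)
The plan is to apply the bipartite swapping trick while tracking the set of colors used, thereby reducing Conjecture~\ref{conj:surj-coloring} to a residual inequality about the colorings that fail the bipartite swapping property. First, observe that the swapping operation preserves the palette. For any $K_i$-pair-labeling $p=(p_1,p_2)$ of $G$ and any $W\subseteq V(G)$, the labeling $\swap(p,W)$ replaces each label $(p_1(v),p_2(v))$ by either itself or $(p_2(v),p_1(v))$, so the unordered pair $\{p_1(v),p_2(v)\}$, and hence the set $\bigcup_{v\in V(G)}\{p_1(v),p_2(v)\}$, is unchanged. For $f\in\Hom(G\sqcup G,K_i)$ or $f\in\Hom(G\x K_2,K_i)$ this set is exactly the image of $f$, since in both cases $\pair(f)$ labels $v$ by $(f(v_0),f(v_1))$. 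Hence the bijection of Proposition~\ref{prop:bst} between $\Hom^\bsp(G\sqcup G,K_i)$ and $\Hom^\bsp(G\x K_2,K_i)$, being realized by a single swap, sends each homomorphism to one with the same image; restricting to the homomorphisms whose image is all of $V(K_i)$ yields a bijection between $\Hom^\surj(G\sqcup G,K_i)\cap\Hom^\bsp(G\sqcup G,K_i)$ and $\Hom^\surj(G\x K_2,K_i)\cap\Hom^\bsp(G\x K_2,K_i)$, so these two sets have equal size.

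Consequently, splitting $\Hom^\surj(G\sqcup G,K_i)$ and $\Hom^\surj(G\x K_2,K_i)$ into their bsp and non-bsp parts, Conjecture~\ref{conj:surj-coloring} is equivalent to
\[
	\abs{\Hom^\surj(G\sqcup G,K_i)\setminus\Hom^\bsp(G\sqcup G,K_i)}\;\le\;\abs{\Hom^\surj(G\x K_2,K_i)\setminus\Hom^\bsp(G\x K_2,K_i)}.
\]
It would therefore suffice to construct an injection from the left family into the right family: its image is automatically disjoint from the image of the bijection above, so gluing the two produces an injection $\Hom^\surj(G\sqcup G,K_i)\hookrightarrow\Hom^\surj(G\x K_2,K_i)$, which is exactly Conjecture~\ref{conj:surj-coloring}.

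I expect this last injection to be the crux. A surjective $f\colon G\sqcup G\to K_i$, written as a pair $(c_0,c_1)$ of proper $i$-colorings of $G$, fails the bipartite swapping property precisely when the conflict graph $\viol(f)=\{uv\in E(G):c_0(u)=c_1(v)\text{ or }c_1(u)=c_0(v)\}$ is non-bipartite, i.e.\ contains an odd cycle. The natural attempt is to fix, once and for all, a maximal bipartite subgraph of each possible conflict graph, swap to repair its violated edges via Lemma~\ref{lem:swap}, and then locally recolor the vertices on the leftover odd circuit(s) so as to destroy the residual cross-conflicts. This mirrors the proof of Proposition~\ref{prop:color-asymp-bipartite}, but there the recoloring succeeds only because $q$ is large enough that a previously unused color is always available, and one still has to verify the map can be inverted; for the full conjecture $i$ may be small, and I do not see how to perform the recoloring in a canonical (hence injective) way, nor do I have a global invertible rule that instead twists colors along the bad cycle using the symmetric group $S_i$ acting on $V(K_i)$. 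Settling the reduced inequality even under the extra assumption that every conflict graph is within a single edge of being bipartite, or for graphs $G$ of small odd girth, would already be new, and this step is where I expect the genuine difficulty to lie.
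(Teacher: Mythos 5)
The statement you were asked to prove is labeled a conjecture in the paper, and the paper explicitly says ``Although we are unable to prove Conjecture~\ref{conj:surj-coloring}, we will prove the inequality for the most significant terms.'' So there is no paper proof to compare against; your response is not a failed proof so much as a correct reduction plus an honest acknowledgment of the open difficulty.

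Your reduction is sound. The swap operation only permutes the pair of labels at each vertex, so it preserves the set $\bigcup_{v\in V(G)}\{p_1(v),p_2(v)\}$, which is precisely the image of the underlying homomorphism; consequently the bijection of Proposition~\ref{prop:bst} restricts to a bijection between $\Hom^\surj(G\sqcup G,K_i)\cap\Hom^\bsp(G\sqcup G,K_i)$ and $\Hom^\surj(G\x K_2,K_i)\cap\Hom^\bsp(G\x K_2,K_i)$. This is exactly the observation the paper invokes in Lemma~\ref{lem:hom-surj-inject} (``the bipartite swapping trick preserves the surjectivity of the map of the vertices''), and it does reduce the conjecture to comparing $\abs{\Hom^\surj\setminus\Hom^\bsp}$ on the two sides. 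You also correctly identify the residual step as the crux: one would need a canonical, invertible rule to repair the leftover odd cycle(s) of violated edges when only $i$ colors are available, and nothing in the paper supplies such a rule. What the paper does instead is prove the conjecture only for $i$ large relative to $2N$: Lemma~\ref{lem:coef-compare} shows both non-bsp parts are empty for $i\geq 2N-t+2$ (where $t$ is the odd girth), and for $i=2N-t+1$ the left non-bsp part is empty while the right one is nonempty, giving strict inequality there; Proposition~\ref{prop:color-asymp-bipartite} then bounds the tail to get the chromatic-polynomial comparison for $q\geq(2N)^{2N+2}$. Your diagnosis of where the genuine difficulty lies matches the state of the problem in the paper exactly, so there is no hidden gap you have missed -- the gap is the open conjecture itself.
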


Although we are unable to prove Conjecture~\ref{conj:surj-coloring}, we will prove the inequality for the most significant terms of \eqref{eq:chrompoly1} and \eqref{eq:chrompoly2}. Note that $\binom{q}{i}$ is a polynomial in $q$ of degree $i$. If $G$ is bipartite, then $G \x K_2 \cong G \sqcup G$, so the two polynomials \eqref{eq:chrompoly1} and \eqref{eq:chrompoly2} are equal. So we shall assume that $G$ is non-bipartite. Our strategy is to compare the coefficients of $\binom{q}{i}$ in \eqref{eq:chrompoly1} and \eqref{eq:chrompoly2} starting from the highest $i$, and show that on the first instance when the two coefficients differ, the coefficient in \eqref{eq:chrompoly2} is greater. This would imply that $P(G, q)^2 < P(G \x K_2, q)$ for large $q$. Specifically, we claim the following.

\begin{lemma} \label{lem:coef-compare}
Suppose that $G$ has $N$ vertices and odd girth $t$, then
\begin{align*}
	\hom^\surj(G \sqcup G, K_i) &= \hom^\surj(G \x K_2, K_i) \quad \text{for $i \geq 2N - t + 2$}, \\
	\text{and} \qquad \hom^\surj(G \sqcup G, K_i) &< \hom^\surj(G \x K_2, K_i) \quad \text{for $i = 2N - t + 1$}.
\end{align*}
\end{lemma}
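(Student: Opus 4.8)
The plan is to reduce to counting the surjective homomorphisms that \emph{fail} the bipartite swapping property, and then to determine those exactly when the number of colors is close to the common vertex count $2N=|V(G\sqcup G)|=|V(G\x K_2)|$. The swapping operation of Proposition~\ref{prop:bst} only permutes the two coordinates of each vertex's pair label, so it does not change the set of colors used; hence it restricts to a bijection between the surjective homomorphisms with the bipartite swapping property on the two sides. Letting $A_i$ and $B_i$ count the surjective homomorphisms $G\sqcup G\to K_i$ and $G\x K_2\to K_i$ that do \emph{not} have the bipartite swapping property, we obtain
\[
	\hom^\surj(G\x K_2,K_i)-\hom^\surj(G\sqcup G,K_i)=B_i-A_i
\]
for every $i$, so it suffices to evaluate $A_i$ and $B_i$ for $i\ge 2N-t+1$.

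I would then prove that $A_i=0$ for all $i\ge 2N-t+1$ and $B_i=0$ for all $i\ge 2N-t+2$. Suppose $f$ is a surjective homomorphism without the bipartite swapping property; then its violated edges contain an odd cycle $Z$, necessarily of length $\ell\ge t$ since $Z$ is a cycle of $G$. Put $m=2N-i$; surjectivity of $f$ forces any set of $s$ vertices of $G\sqcup G$ (resp.\ $G\x K_2$) to receive at least $s-m$ distinct colors. Each edge $z_jz_{j+1}$ of $Z$, being violated, records an equality of colors between two of the $2\ell$ positions $(z_j)_0,(z_j)_1$; retaining one such equality per edge of $Z$ yields a graph $\Delta$ on these $2\ell$ positions with $\ell$ edges whose components are monochromatic, so $\Delta$ has at least $2\ell-m$ components. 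In the $G\sqcup G$ case these equalities are of ``cross'' type, so $\Delta$ is a subgraph of $Z\x K_2=C_{2\ell}$ (using that $\ell$ is odd); since $\ell<2\ell$ the graph $\Delta$ is a forest with exactly $\ell$ components, which forces $\ell\ge 2\ell-m$, i.e.\ $m\ge\ell\ge t$. In the $G\x K_2$ case the equalities are of ``same-side'' type, so $\Delta$ is a subgraph of $Z\sqcup Z=C_\ell\sqcup C_\ell$, whose only cycles are the two copies of $Z$; $\Delta$ can contain at most one of them, and that already uses all $\ell$ edges, so $\Delta$ has at most $\ell+1$ components, forcing $\ell+1\ge 2\ell-m$, i.e.\ $m\ge\ell-1\ge t-1$. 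This proves the claim, and already gives $\hom^\surj(G\sqcup G,K_i)=\hom^\surj(G\x K_2,K_i)$ for $i\ge 2N-t+2$.

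For the boundary case $i=2N-t+1$, so $m=t-1$, I would compute $B_i$ by reading off the equality case of the $G\x K_2$ estimate above (when $m=t-1$ all the displayed inequalities are forced to be equalities): one gets $\ell=t$, $\Delta$ equal to one whole copy of $Z$ inside $Z\sqcup Z$, and $f$ has a single nontrivial color class, equal to $\{(z)_0:z\in V(Z)\}$ or to $\{(z)_1:z\in V(Z)\}$. Setting $S=V(Z)$, we must have $G[S]\cong C_t$ because any $t$-cycle of $G$ is induced (the odd girth being $t$); and conversely, for any $t$-set $S$ with $G[S]\cong C_t$, any map that is constant on $\{(z)_0:z\in S\}$, or constant on $\{(z)_1:z\in S\}$, and injective and surjective on the other $2N-t$ positions is a bona fide surjective homomorphism $G\x K_2\to K_i$ without the bipartite swapping property. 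Enumerating these---choose $S$ (there are $\nu(G)$ choices, where $\nu(G)\ge 1$ is the number of $t$-cycles of $G$), a side, the repeated color, and a bijection from the remaining $2N-t$ positions onto the remaining $2N-t$ colors---gives $B_i=2\,\nu(G)\,i!$. With $A_i=0$ this yields $\hom^\surj(G\x K_2,K_i)-\hom^\surj(G\sqcup G,K_i)=2\,\nu(G)\,i!>0$, the required strict inequality.

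The main obstacle is the structural claim of the second paragraph. Trying to bound the number of violated edges directly is too lossy once $G$ admits large independent sets; the decisive observation is that the ``coincidence graph'' $\Delta$ built from a short odd cycle is confined to a single cycle (or to two disjoint cycles), hence carries essentially no cycle of its own, so its monochromatic components are too numerous to coexist with $f$ being close to surjective. The remaining care is in checking that the equality case of the $G\x K_2$ bound is precisely the family of homomorphisms enumerated in the third paragraph, so that $B_i=2\nu(G)i!$ is an exact count rather than merely an upper or lower bound.
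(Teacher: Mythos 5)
Your proposal is correct and follows the same top-level plan as the paper: the swapping operation preserves both the set of colors used and the bipartite swapping property, so it restricts to a bijection between the surjective bsp homomorphisms on the two sides, and the difference $\hom^\surj(G\x K_2,K_i)-\hom^\surj(G\sqcup G,K_i)$ equals the difference $B_i-A_i$ of the non-bsp surjective counts. The paper organizes this identically (Lemma~\ref{lem:hom-surj-inject}), reducing to showing $A_i=0$ for $i\ge 2N-t+1$, $B_i=0$ for $i\ge 2N-t+2$, and $B_{2N-t+1}>0$ (Lemmas~\ref{lem:surj-bsp1} and~\ref{lem:surj-bsp2}).

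Where you genuinely diverge is in the proof of the color-count lemmas underlying these vanishing statements. The paper (Lemma~\ref{lem:cycle-violate1}) argues sequentially: color the pairs $(v^j_0,v^j_1)$ along the violated cycle in order and count at least one repeated color per step, with an extra hand-waved ``$+1$'' for the closing edge when $\ell$ is odd. Your coincidence graph $\Delta$ makes this clean and parity-sensitive in a single stroke: the $\ell$ retained coincidences live inside the bipartite double cover $Z\x K_2=C_{2\ell}$ (here oddness of $\ell$ is used precisely to make the double cover connected), hence $\Delta$ is a forest with exactly $\ell$ monochromatic components, while surjectivity forces at least $2\ell-m$ components; the $G\x K_2$ case is the same with $Z\sqcup Z$ and the one-cycle dichotomy. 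This buys you a tighter and more transparent argument, and it naturally yields the exact value $B_{2N-t+1}=2\,\nu(G)\,i!$ (with $\nu(G)$ the number of $t$-cycles, each necessarily induced since the odd girth is $t$) where the paper only exhibits a single witness. You should still record the observation, implicit in your third paragraph, that the equality analysis forces the violated odd cycle to be unique so that the enumeration is a bijection and not an overcount; but that follows from the color-class structure exactly as you indicate, and the strict inequality only needs $\nu(G)\ge 1$ in any case.
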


The proof of Lemma~\ref{lem:coef-compare} requires several more lemmas.

\begin{lemma} \label{lem:cycle-violate1}
If $f \in \Hom^\surj(G \sqcup G, K_i)$, and the set of violated edges of $G$ with respect to $f$ contains a cycle of length $\ell$, then $i \leq 2N - \ell + 1$, where $N = \abs{V(G)}$. Furthermore, if $\ell$ is odd, then $i \leq 2N - \ell$.
\end{lemma}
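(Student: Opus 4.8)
The plan is to work with the $K_i$-pair-labeling $p = \pair(f) : V(G) \to V(K_i) \times V(K_i)$ and to read off, from the cycle of violated edges, enough forced equalities among the coordinates of $p$ to force the number of colors down. The first thing to record is what ``violated'' means in this special case: since $K_i$ is complete and $f \in \Hom(G \sqcup G, K_i)$ already guarantees $p_0(u) \neq p_0(v)$ and $p_1(u) \neq p_1(v)$ for every $uv \in E(G)$, an edge $uv$ is violated precisely when $p_0(u) = p_1(v)$ or $p_1(u) = p_0(v)$; equivalently, there is an index $a \in \set{0,1}$ with $p_a(u) = p_{1-a}(v)$, i.e.\ the endpoints agree on a pair of ``opposite'' coordinates.

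Next I would set up an auxiliary graph. Let $C = w_1 w_2 \cdots w_\ell$ (indices modulo $\ell$) be a cycle of length $\ell$ contained in the violated subgraph of $G$; since $G$ is simple the $w_k$ are distinct, so the $2\ell$ ``slots'' $(w_k, a)$ with $k \in \set{1, \dots, \ell}$ and $a \in \set{0,1}$ are distinct. For each $k$ choose, using the description above, an index $a_k$ with $p_{a_k}(w_k) = p_{1-a_k}(w_{k+1})$, and let $\Gamma$ be the graph on these $2\ell$ slots whose edges are, for each $k$, the pair $\set{(w_k, a_k), (w_{k+1}, 1-a_k)}$. Then $\Gamma$ has exactly $\ell$ edges, each joining two slots that carry the same color, and each slot lies on at most two edges of $\Gamma$ (one ``across'' each of the two edges of $C$ at the corresponding $w_k$), so $\Gamma$ has maximum degree at most $2$. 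The structural heart of the argument is that a cycle of $\Gamma$, after entering a slot over $w_{k+1}$ along the edge across $w_k w_{k+1}$, can only leave along the edge across $w_{k+1} w_{k+2}$; hence any cycle of $\Gamma$ uses all $\ell$ of its edges. Consequently $\Gamma$ has at most one cycle, so at most $\ell + 1$ connected components (either a single $\ell$-cycle together with $\ell$ isolated slots, or a forest on $2\ell$ vertices with $\ell$ edges, which has $\ell$ components). Moreover the coordinate index flips ($a \mapsto 1-a$) along every edge of $\Gamma$, so running once around a cycle of $\Gamma$ and returning is impossible unless $\ell$ is even; thus when $\ell$ is odd, $\Gamma$ has no cycle at all and has exactly $\ell$ components.

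Finally I would count colors. The color is constant on each component of $\Gamma$, so at most $\ell + 1$ distinct colors occur among the $2\ell$ slots attached to $C$, and at most $\ell$ of them when $\ell$ is odd. Among the $2N = 2\abs{V(G)}$ slots of $G$ there remain $2N - 2\ell$ slots not attached to $C$, contributing at most $2N - 2\ell$ further colors; since $f$ is surjective onto $V(K_i)$, all $i$ colors must occur somewhere, so $i \leq (\ell + 1) + (2N - 2\ell) = 2N - \ell + 1$ in general and $i \leq \ell + (2N - 2\ell) = 2N - \ell$ when $\ell$ is odd. I expect the only real obstacle to be the structural claim that every cycle of $\Gamma$ sweeps through all $\ell$ of its edges, together with the parity obstruction in the odd case; once that is in hand the rest is bookkeeping. (An equivalent high-level view: $\Gamma$ is obtained from the ``all-swap'' $2$-lift of $C_\ell$ --- which is two disjoint $\ell$-cycles when $\ell$ is even and a single $2\ell$-cycle when $\ell$ is odd --- by deleting one of the two edges lying over each edge of $C_\ell$.)
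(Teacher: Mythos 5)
Your proof is correct, and it takes a genuinely different route from the paper's. The paper introduces the ``repetition deficit'' $\sum_{c \in V(K_i)} (\abs{f^{-1}(c)}-1) = 2N - i$ and lower-bounds it by $\ell - 1$ (or $\ell$ when $\ell$ is odd) via a sequential walk along the violated cycle $v^1,\dots,v^\ell$: each successive violated edge $v^j v^{j+1}$ forces at least one color repetition when the pair $(v^{j+1}_0, v^{j+1}_1)$ is colored, and the closing edge contributes an extra repetition in the odd case. You replace this incremental accounting with a static structure --- the auxiliary ``color-agreement'' graph $\Gamma$ on the $2\ell$ slots, with one edge per violated cycle edge --- and read off the bound from the number of connected components of $\Gamma$, using the maximum-degree-$2$ structure, the observation that any cycle of $\Gamma$ must sweep all $\ell$ of its edges, and the $a \mapsto 1-a$ parity obstruction that kills the cycle when $\ell$ is odd. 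The two routes land on the same inequality, but yours makes the combinatorial mechanism (in particular exactly why the odd case gains one) fully explicit, where the paper's phrase ``some previously used color must be repeated at least one more time, thereby contributing at least one to the sum'' is terser about why the $\ell - 1$ contributions accumulate; the price is a longer setup. Your closing remark identifying $\Gamma$ as a spanning subgraph with one edge per fiber of the all-swap $2$-lift of $C_\ell$ is a clean high-level explanation of the odd/even split.
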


\begin{proof}
For each color $c \in V(K_i)$, let $\abs{f^{-1}(c)}$ denote the number of vertices of $G \sqcup G$ colored using $c$. Then $\sum_{c \in V(K_i)} \abs{f^{-1}(c)} = 2N$, so that
\begin{equation} \label{eq:repeated-colors}
	\sum_{c \in V(K_i)} (\abs{f^{-1}(c)}-1) = 2N - i.
\end{equation}
Let $v^1, \dots, v^{\ell} \in V(G)$ be the cycle of violated edges. As we color each pair of vertices $(v^j_0, v^j_1)$ with a pair of colors in the order $j=1,2,\dots,\ell$, the condition that the edge $v^jv^{j+1}$, for $1 \leq i \leq \ell - 1$, is violated implies that in order to color the pair $(v^{j+1}_0, v^{j+1}_1)$ after having colored $(v^j_0, v^j_1)$, some previously used color must be repeated at least one more time, thereby contributing at least one to the sum on the left-hand side of \eqref{eq:repeated-colors}. Since this is the case for each $1 \leq i \leq \ell - 1$, it follows that the left-hand side of \eqref{eq:repeated-colors} is at least $\ell - 1$. Thus $2N - i \geq \ell - 1$, thereby showing the first statement in the lemma.

If $\ell$ is odd, then the final edge in the cycle $v^{\ell}v^1$ must also contribute one more repeated color, thereby showing that $2N - i \geq \ell$. (This is not the case for $\ell$ even because we can use the same color for $v^1_0, v^2_1, v^3_0, v^4_1, \dots, v^\ell_1$, and different and distinct colors for all other vertices of $G \sqcup G$).
\end{proof}

There is a parallel lemma for $G \x K_2$, whose proof we omit since it is completely analogous to the first part of Lemma \ref{lem:cycle-violate1}.

\begin{lemma} \label{lem:cycle-violate2}
If $f \in \Hom^\surj(G \x K_2, K_i)$, and the set of violated edges of $G$ with respect to $f$ contains a cycle of length $\ell$, then $i \leq 2N - \ell + 1$.
\end{lemma}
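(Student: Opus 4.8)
The plan is to transcribe the first part of the proof of Lemma~\ref{lem:cycle-violate1}, with $G \sqcup G$ replaced by $G \x K_2$. First I would pin down what a violated edge looks like in this setting: if $f \in \Hom^\surj(G \x K_2, K_i)$ and $p = \pair(f)$, then because $K_i$ has no loops, a non-edge of $K_i$ is simply a pair of equal vertices, so $uv \in E(G)$ being violated with respect to $f$ means $p_a(u) = p_b(v)$ for some $a,b \in \set{0,1}$. (Since $f$ is an actual homomorphism of $G \x K_2$ one can even take $a=b$, but this strengthening is not needed.) In words: the two endpoints of a violated edge receive a common color in one of their two coordinates. Next, exactly as in Lemma~\ref{lem:cycle-violate1}, surjectivity together with $\abs{V(G \x K_2)} = 2N$ gives
\[
	\sum_{c \in V(K_i)} \paren{\abs{f^{-1}(c)} - 1} = 2N - i,
\]
where $\abs{f^{-1}(c)}$ counts the vertices of $G \x K_2$ colored $c$; every summand is nonnegative because $f$ is surjective.

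Now let $v^1, \dots, v^\ell \in V(G)$ be the violated cycle, so that $v^j v^{j+1}$ is violated for each $j$ modulo $\ell$, and put $S_j = \set{v^1_0, v^1_1, \dots, v^j_0, v^j_1} \subseteq V(G \x K_2)$. I would reveal the pairs one at a time in the order $j = 1, 2, \dots, \ell$. Since $v^j v^{j+1}$ is violated, one of the colors $f(v^{j+1}_0), f(v^{j+1}_1)$ equals one of $f(v^j_0), f(v^j_1)$ and hence already occurs in $f(S_j)$; therefore $\abs{f(S_{j+1})} \le \abs{f(S_j)} + 1$. Iterating from $\abs{f(S_1)} \le 2$ yields $\abs{f(S_\ell)} \le \ell + 1$, and since $S_\ell$ consists of $2\ell$ distinct vertices,
\[
	\ell - 1 \le 2\ell - \abs{f(S_\ell)} = \sum_{c \in f(S_\ell)} \paren{\abs{f^{-1}(c) \cap S_\ell} - 1} \le \sum_{c \in V(K_i)} \paren{\abs{f^{-1}(c)} - 1} = 2N - i,
\]
so $i \le 2N - \ell + 1$, as claimed.

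I do not expect a genuine obstacle, since this is a line-by-line analogue of the $G \sqcup G$ argument. The one place that needs a little care — and which the proof of Lemma~\ref{lem:cycle-violate1} treats informally as ``contributing at least one to the sum'' — is the passage from the number of colors used on $S_\ell$ to the global quantity $2N - i$; the set-bookkeeping above makes this rigorous, and it is precisely where surjectivity of $f$ enters (it guarantees the discarded terms $\abs{f^{-1}(c)} - 1$ for $c \notin f(S_\ell)$ are $\ge 0$). Finally, I would note that, unlike Lemma~\ref{lem:cycle-violate1}, there is no parity improvement to establish here: for $G \x K_2$ the bound $i \le 2N - \ell + 1$ is already tight for odd cycles (for instance $G = C_3$ attains it), so only the parity-free statement is available — which is all that Lemma~\ref{lem:coef-compare} will need.
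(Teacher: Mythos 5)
Your proof is correct and is essentially the argument the paper intends: the paper omits the proof, noting only that it is ``completely analogous to the first part of Lemma~\ref{lem:cycle-violate1},'' and you have carried out that transcription faithfully, with the added benefit of making the bookkeeping from the number of colors used on $S_\ell$ to the global quantity $2N-i$ rigorous. Your closing observation that the parity improvement of Lemma~\ref{lem:cycle-violate1} does not carry over to $G \times K_2$ (with $C_3$ witnessing tightness) correctly explains why the paper only states the weaker, parity-free bound here.
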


\begin{lemma} \label{lem:surj-bsp1}
Suppose that $G$ has $N$ vertices and odd girth $t$, then every element of $\Hom^\surj(G \sqcup G, K_i)$ for $i \geq 2N - t + 1$ has the bipartite swapping property.
\end{lemma}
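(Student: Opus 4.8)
The plan is to argue by contradiction, reducing directly to Lemma~\ref{lem:cycle-violate1}. Recall that by Definition~\ref{def:bsp}, $f$ has the bipartite swapping property precisely when the set of edges of $G$ that are violated with respect to $f$ forms a bipartite subgraph of $G$, and a graph is bipartite exactly when it contains no odd cycle. So it suffices to show that for $i \geq 2N - t + 1$, the set of violated edges of any $f \in \Hom^\surj(G \sqcup G, K_i)$ contains no odd cycle.

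First I would suppose, for contradiction, that the violated edges do contain some odd cycle, say of length $\ell$. Since $G$ has odd girth $t$ and this odd cycle sits inside $G$, we have $\ell \geq t$. Now apply the ``furthermore'' clause of Lemma~\ref{lem:cycle-violate1}: because $\ell$ is odd, the lemma gives $i \leq 2N - \ell$. Combining with $\ell \geq t$ yields $i \leq 2N - \ell \leq 2N - t$, which contradicts the hypothesis $i \geq 2N - t + 1$. Hence no odd cycle of violated edges exists, the violated subgraph is bipartite, and $f$ has the bipartite swapping property.

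I do not expect any genuine obstacle here: the statement is essentially a packaging of Lemma~\ref{lem:cycle-violate1} together with the definition of odd girth (every odd cycle in $G$ has length at least $t$) and the fact that ``bipartite'' $=$ ``no odd cycle.'' The only point requiring a line of care is making sure the cycle of violated edges really is a cycle \emph{in $G$} (so that the odd-girth bound applies), which is immediate since violated edges are, by definition, edges of $G$. One could alternatively phrase the whole thing without contradiction: any odd cycle among the violated edges has length $\geq t$, so by Lemma~\ref{lem:cycle-violate1} it could only occur when $i \leq 2N - t$, hence for $i \geq 2N - t + 1$ there is none.
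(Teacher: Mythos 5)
Your proof is correct and follows essentially the same argument as the paper: assume the set of violated edges contains an odd cycle of length $\ell \geq t$, invoke the odd-cycle clause of Lemma~\ref{lem:cycle-violate1} to get $i \leq 2N - \ell \leq 2N - t$, and derive the contradiction.
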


\begin{proof}
Suppose that some $f \in \Hom^\surj(G \sqcup G, K_i)$ fails to have the bipartite swapping property, then the set of violated edges contains an odd $\ell$-cycle, and $\ell \geq t$ since $t$ is the odd girth of $G$. Then Lemma \ref{lem:cycle-violate1} implies that $i \leq 2N - \ell \leq 2N - t$, which contradicts $i \geq 2N - t + 1$.
\end{proof}

\begin{lemma} \label{lem:surj-bsp2}
Suppose that $G$ has $N$ vertices and odd girth $t$, then every element of $\Hom^\surj(G \x K_2, K_i)$ for $i \geq 2N - t + 2$ has the bipartite swapping property. Furthermore, some element of $\Hom^\surj(G \x K_2, K_i)$ for $i = 2N - t + 1$ does not have the bipartite swapping property.
\end{lemma}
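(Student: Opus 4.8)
The plan is to handle the two assertions separately: the first is essentially a copy of the proof of Lemma~\ref{lem:surj-bsp1}, and the second is an explicit construction.

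For the first assertion I would argue by contradiction, exactly as in Lemma~\ref{lem:surj-bsp1} but invoking Lemma~\ref{lem:cycle-violate2} (the $G \x K_2$ analogue) in place of Lemma~\ref{lem:cycle-violate1}. Suppose some $f \in \Hom^\surj(G \x K_2, K_i)$ with $i \geq 2N - t + 2$ fails to have the bipartite swapping property. Then by Definition~\ref{def:bsp} the set of edges of $G$ that are violated with respect to $f$ is a non-bipartite subgraph of $G$, hence contains an odd cycle, whose length $\ell$ satisfies $\ell \geq t$ because $t$ is the odd girth of $G$. Lemma~\ref{lem:cycle-violate2} then gives $i \leq 2N - \ell + 1 \leq 2N - t + 1$, contradicting $i \geq 2N - t + 2$.

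For the second assertion I would exhibit an explicit $f \in \Hom^\surj(G \x K_2, K_i)$ with $i = 2N - t + 1$ whose set of violated edges contains an odd cycle. Fix an odd cycle $v^1 v^2 \cdots v^t$ of $G$ of length $t$, which exists since $t$ is the odd girth. Recall that a homomorphism into $K_i$ is the same as a proper $i$-coloring, and that every edge of $G \x K_2$ has the form $\{u_0, v_1\}$ for some $uv \in E(G)$. I would color the $2N$ vertices of $G \x K_2$ as follows: give a single color $A$ to all of $v^1_0, v^2_0, \dots, v^t_0$; give pairwise distinct colors $\beta_1, \dots, \beta_t$, all different from $A$, to $v^1_1, \dots, v^t_1$; and give $2(N-t)$ further pairwise distinct colors, all different from $A$ and from every $\beta_j$, to the remaining vertices $w_0, w_1$ with $w \in V(G) \setminus \{v^1, \dots, v^t\}$. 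Three checks then finish the argument. (i) This is a proper coloring of $G \x K_2$: for an edge $\{u_0, v_1\}$ with $uv \in E(G)$, the endpoint colors differ because either at least one of $u_0, v_1$ carries a fresh color (hence distinct from the other), or both $u, v$ lie on the cycle and the colors are $A$ and some $\beta_j \neq A$, or neither lies on the cycle and the colors are distinct fresh colors since $u \neq v$. (ii) The coloring uses exactly $1 + t + 2(N-t) = 2N - t + 1 = i$ colors, each of them actually used, so $f \in \Hom^\surj(G \x K_2, K_i)$. (iii) Every cycle edge $v^j v^{j+1}$ (indices read modulo $t$) is violated with respect to $\pair(f)$: since in $K_i$ two vertices are adjacent exactly when they are distinct, $v^j v^{j+1}$ is violated as soon as $p_0(v^j) = p_0(v^{j+1})$, and indeed both equal $A$. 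Hence the odd cycle $v^1 \cdots v^t$ lies inside the set of violated edges, so $f$ does not have the bipartite swapping property, as required.

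The only place I expect to need any care is check (ii): one must ensure the colors assigned to the non-cycle vertices are genuinely disjoint from $\{A, \beta_1, \dots, \beta_t\}$ and from one another, so that the color count is exactly $i$ — not fewer — and surjectivity actually holds. This is immediate from the way the colors are chosen, but it is the step where an off-by-one in the count could slip in; everything else is routine bookkeeping, and no further structural input (such as the cycle being induced) is needed, since it suffices that the violated set contain \emph{some} odd cycle.
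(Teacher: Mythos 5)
Your proof is correct and takes essentially the same approach as the paper's: the first part is a contradiction argument identical to Lemma~\ref{lem:surj-bsp1} but citing Lemma~\ref{lem:cycle-violate2}, and the second part uses the same explicit coloring (one color repeated along $v^1_0,\dots,v^t_0$, fresh distinct colors elsewhere). The paper states this construction more tersely; your three explicit checks (properness, exact color count $2N-t+1$, and violation of the cycle edges via $p_0(v^j)=p_0(v^{j+1})$) fill in the verification the paper leaves implicit, and your remark that the cycle need not be induced is a correct observation.
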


\begin{proof}
The first part is analogous to Lemma \ref{lem:surj-bsp1}. For the second part, suppose that $v^1, \dots, v^t$ is a $t$-cycle in $G$. Consider the coloring of $G \x K_2$ which colors $v_0^1, v_0^2, \dots, v_0^t$ all with the same color, and all other vertices of $G \x K_2$ with different and distinct colors. Then the odd $t$-cycle is violated, and exactly $2N-t+1$ colors are used.
\end{proof}

Now we are ready to apply the bipartite swapping trick.

\begin{lemma} \label{lem:hom-surj-inject}
Suppose that $G$ has $N$ vertices and odd girth $t$, then for $i\geq 2N-t+1$, the bipartite swapping trick gives an injective map
\[
	\phi : \Hom^\surj(G \sqcup G, K_i) \longto \Hom^\surj(G \x K_2, K_i).
\]
This map is a bijection when $i \geq 2N - t + 2$, but fails to be a surjection when $i = 2N - t + 1$.
\end{lemma}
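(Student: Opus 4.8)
The plan is to realize $\phi$ as the restriction of the bijection from Proposition~\ref{prop:bst} to surjective homomorphisms, and then to read off injectivity, bijectivity, and the failure of surjectivity directly from Lemmas~\ref{lem:surj-bsp1} and~\ref{lem:surj-bsp2}. The one genuinely new ingredient I would establish first is that \emph{the swapping operation preserves surjectivity}: for an $H$-pair-labeling $p$ of $G$ and any $W \subseteq V(G)$, the set of vertices of $H$ that appear in $p$, namely $\bigcup_{v \in V(G)} \{p_0(v), p_1(v)\}$, is unchanged on passing to $\swap(p, W)$, since at each vertex the unordered pair of labels is either kept fixed or transposed. Hence if $f \in \Hom(G \sqcup G, H)$ is surjective as a map $V(G \sqcup G) \to V(H)$, then so is every homomorphism obtained from $f$ by swapping, and conversely by applying the inverse swap. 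Consequently the bijection of Proposition~\ref{prop:bst} restricts to a bijection between $\Hom^\bsp(G \sqcup G, H) \cap \Hom^\surj(G \sqcup G, H)$ and $\Hom^\bsp(G \x K_2, H) \cap \Hom^\surj(G \x K_2, H)$.

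Next I would specialize to $H = K_i$ with $i \geq 2N - t + 1$. By Lemma~\ref{lem:surj-bsp1}, every element of $\Hom^\surj(G \sqcup G, K_i)$ has the bipartite swapping property, so $\Hom^\surj(G \sqcup G, K_i) = \Hom^\bsp(G \sqcup G, K_i) \cap \Hom^\surj(G \sqcup G, K_i)$, and the restricted bijection above becomes an injection
\[
	\phi : \Hom^\surj(G \sqcup G, K_i) \longto \Hom^\surj(G \x K_2, K_i),
\]
whose image is \emph{exactly} $\Hom^\bsp(G \x K_2, K_i) \cap \Hom^\surj(G \x K_2, K_i)$. Explicitly, $\phi(f)$ is the homomorphism represented by $\swap(\pair(f), W_{\viol(f)})$, in the notation of the proof of Proposition~\ref{prop:bst}; Lemma~\ref{lem:swap} guarantees it lies in $\Hom^\bsp(G \x K_2, K_i)$ and the surjectivity-preservation observation guarantees it stays surjective.

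Finally I would compare the image with the full target. If $i \geq 2N - t + 2$, the first part of Lemma~\ref{lem:surj-bsp2} gives $\Hom^\bsp(G \x K_2, K_i) \cap \Hom^\surj(G \x K_2, K_i) = \Hom^\surj(G \x K_2, K_i)$, so $\phi$ is onto and hence a bijection. If $i = 2N - t + 1$, the second part of Lemma~\ref{lem:surj-bsp2} exhibits an element of $\Hom^\surj(G \x K_2, K_i)$ that lacks the bipartite swapping property; since the image of $\phi$ consists precisely of the surjective homomorphisms \emph{possessing} that property, this element is not in the image, so $\phi$ is not surjective.

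The only real content is the surjectivity-preservation step together with the care needed to identify the image of $\phi$ with the surjective homomorphisms having the bipartite swapping property — rather than merely a subcollection of the surjective ones — since it is this exact identification that makes the non-surjectivity at $i = 2N - t + 1$ follow from Lemma~\ref{lem:surj-bsp2}. Everything else is bookkeeping with the inclusions already supplied by Lemmas~\ref{lem:surj-bsp1} and~\ref{lem:surj-bsp2} and the bijection of Proposition~\ref{prop:bst}.
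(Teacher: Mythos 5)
Your proof is correct and follows essentially the same route as the paper's: restrict the bijection of Proposition~\ref{prop:bst} to surjective homomorphisms (noting that the swapping operation preserves the set of used colors and hence surjectivity), then invoke Lemmas~\ref{lem:surj-bsp1} and~\ref{lem:surj-bsp2} for the three claims. Your write-up is actually a bit more careful than the paper's, which asserts the surjectivity-preservation and the identification of the image somewhat tersely; making the image exactly $\Hom^\bsp(G \x K_2, K_i) \cap \Hom^\surj(G \x K_2, K_i)$ explicit is precisely what makes the non-surjectivity at $i = 2N - t + 1$ transparent.
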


\begin{proof}
From Lemma \ref{lem:surj-bsp1}, we see that when $i\geq 2N-t+1$, $\Hom^\surj(G \sqcup G, K_i)$ is a subset of $\Hom^\bsp(G \sqcup G, K_i)$, so that we can bijectively map it to a subset of $\Hom^\surj(G \x K_2, K_i)$. Note that the bipartite swapping trick preserves the surjectivity of the map of the vertices, so the image of $\Hom^\surj(G \sqcup G, K_i)$ lies in $\Hom^\surj(G \x K_2, K_i)$, and hence $\phi$ is an injection.

When $i \geq 2N-t +2$, from Lemma \ref{lem:surj-bsp2} we know that $\Hom^\surj(G \x K_2, K_i) \subseteq \Hom^\bsp(G \x K_2, K_i)$, so that we can apply the bipartite swapping trick to $\Hom^\surj(G \x K_2, K_i)$ to obtain the inverse of $\phi$. 

When $i = 2N - t + 1$, from Lemma \ref{lem:surj-bsp2} we know that some element of $\Hom^\surj(G \x K_2, K_i)$ does not have the bipartite swapping property. Therefore, $\phi$ is not surjective.
\end{proof}

Lemma \ref{lem:coef-compare} follows immediately from Lemma \ref{lem:hom-surj-inject}.

\begin{proof}[Proof of Proposition \ref{prop:color-asymp-bipartite}]
If $G$ is bipartite, the $G \x K_2 \cong G \sqcup G$, so $P(G, q)^2 = P(G \x K_2, q)$. Otherwise, let $t$ be the odd girth of $G$. Then using Lemma \ref{lem:coef-compare} and equations \eqref{eq:chrompoly1} and  \eqref{eq:chrompoly2}, we find that for $q \geq (2N)^{2N+2}$,
\begin{align*}
	P(G \x K_2, q) - P(G\sqcup G, q)
&	=  \sum_{i=0}^{2N} (\hom^\surj(G \x K_2, K_i) - \hom^\surj(G \sqcup G, K_i)) \binom{q}{i}
\\&	\geq \binom{q}{2N-t + 1} +  \sum_{i=0}^{2N-t} (\hom^\surj(G \x K_2, K_i) - \hom^\surj(G \sqcup G, K_i)) \binom{q}{i}
\\&	\geq \binom{q}{2N-t + 1} -  \sum_{i=0}^{2N-t} \hom^\surj(G \sqcup G, K_i) \binom{q}{i}
\\&	\geq \binom{q}{2N-t + 1} -  \sum_{i=0}^{2N-t} i^{2N} \binom{q}{i}
\\&	\geq \binom{q}{2N-t + 1} - (2N-t+1)(2N-t)^{2N} \binom{q}{2N-t}
\\&	\geq \( \frac{q - 2N + t}{2N - t + 1} - (2N-t+1)(2N-t)^{2N} \) \binom{q}{2N-t}
\end{align*}
which is nonnegative as long as
\[
	q \geq (2N - t + 1)^2 (2N - t)^{2N} + 2N - t.
\]
Note that $t \geq 3$, so $q \geq (2N)^{2N+2}$ suffices.
\end{proof}


\section{Stable set polytope} \label{sec:polytope}

Let $G$ be a graph. For any $S \subseteq V(G)$, let $\b 1_S \in \RR^V$ be the characteristic vector of $S$, i.e., the component of $\b 1_S$ corresponding to $v \in V(G)$ is 1 if $v \in S$ and 0 otherwise. The \emph{stable set polytope} $\STAB(G)$ of $G$ is defined to be the convex hull of the characteristic vectors of all independent sets of $G$, i.e.,
\[
	\STAB(G) = \conv\set{\b 1_I : I \in \c I(G)}.
\]
For instance, $\STAB(K_3)$ is the tetrahedron with vertices $(0,0,0), (1,0,0), (0,1,0), (0,0,1)$. 
For every $I \in \c I(G)$, $\bx = \b 1_I$ satisfies
\begin{align}
	0 \leq x_v &\leq 1 \quad \forall v \in V(G)  \label{eq:estab1}, \text{ and }\\
	x_u + x_v &\leq 1 \quad \forall uv \in E(G) \label{eq:estab2}. 
\end{align}
It follows that every point in $\STAB(G)$ also satisfies \eqref{eq:estab1} and \eqref{eq:estab2}, and hence $\STAB(G)$ is contained in the polytope
\[
	\ESTAB(G) = \set{ \b x \in \RR^V : \b x = (x_v) \text{ satisfies \eqref{eq:estab1} and \eqref{eq:estab2}}}.
\]
Although we always have $\STAB(G) \subseteq \ESTAB(G)$, the containment may be strict. For instance, $(\tfrac12,\tfrac12,\tfrac12)$ lies in $\ESTAB(K_3)$ but not $\STAB(K_3)$. It is well-known that the two polytopes are equal if and only if $G$ is bipartite.

\begin{theorem}\cite[Thm.~19.7]{SchA} \label{thm:stab=estab}
For any graph $G$, $\STAB(G) \subseteq \ESTAB(G)$, with equality if and only if $G$ is bipartite.
\end{theorem}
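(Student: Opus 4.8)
The inclusion $\STAB(G) \subseteq \ESTAB(G)$ was already observed above, so the plan is to establish the two directions of the ``equality iff bipartite'' claim separately. First I would treat the case where $G$ is bipartite, the goal being the reverse inclusion $\ESTAB(G) \subseteq \STAB(G)$. The idea is to show that $\ESTAB(G)$ is an \emph{integral} polytope, i.e., every vertex of it has all coordinates in $\set{0,1}$; granting this, each such vertex satisfies \eqref{eq:estab2} and hence equals the characteristic vector $\b 1_I$ of some $I \in \c I(G)$, so $\ESTAB(G)$, being the convex hull of its vertices, lies inside $\STAB(G)$. To prove integrality I would write the constraints \eqref{eq:estab1}--\eqref{eq:estab2} as $\bx \geq 0$, $\bx \leq \b 1$, $M \bx \leq \b 1$, where $M$ is the vertex--edge incidence matrix of $G$, and invoke the classical fact that $M$ is totally unimodular when $G$ is bipartite; total unimodularity is preserved when the rows $\pm I$ are appended, and with the right-hand side integral every vertex of the resulting polyhedron is integral.

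For the converse, suppose $G$ is not bipartite and fix an odd cycle of $G$ on a vertex set $C$ with $\abs{C} = 2k+1$. I would exhibit the point $\bx \in \RR^{V}$ with $x_v = \tfrac12$ for $v \in C$ and $x_v = 0$ otherwise. A direct check gives $\bx \in \ESTAB(G)$: \eqref{eq:estab1} is clear, and for $uv \in E(G)$ we have $x_u + x_v = 1$ if both endpoints lie in $C$ and $x_u + x_v \leq \tfrac12$ otherwise, so \eqref{eq:estab2} holds. On the other hand, every independent set $I$ of $G$ satisfies $\abs{I \cap C} \leq k$, since the independence number of a cycle of length $2k+1$ is $k$; thus $\sum_{v \in C} (\b 1_I)_v \leq k$ for all $I \in \c I(G)$, and passing to convex combinations yields $\sum_{v \in C} y_v \leq k$ for every $\by \in \STAB(G)$. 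Since $\sum_{v \in C} x_v = k + \tfrac12 > k$, we get $\bx \in \ESTAB(G) \setminus \STAB(G)$, so the inclusion is strict.

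The main obstacle is the integrality claim in the bipartite case: its only non-elementary input is the total unimodularity of the incidence matrix of a bipartite graph (equivalently, K\"onig's theorem, or strong LP duality applied to the fractional matching/vertex-cover pair). Everything else---the forward inclusion, and the odd-cycle witness in the non-bipartite case---is a routine verification. Since the statement is classical, one may alternatively simply cite it, as is done here via \cite{SchA}.
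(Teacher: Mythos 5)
The paper does not prove this theorem at all; it simply cites it as \cite[Thm.~19.7]{SchA}, a classical result of polyhedral combinatorics. Your proposal supplies a correct, self-contained proof following the standard route: for bipartite $G$, integrality of $\ESTAB(G)$ via total unimodularity of the vertex--edge incidence matrix (Hoffman--Kruskal), and in the other direction an odd-cycle fractional witness $\bx$ with $x_v=\tfrac12$ on the cycle, ruled out of $\STAB(G)$ by the odd-hole inequality $\sum_{v\in C} y_v \le k$. All steps check out: the containment is indeed established in the text above the theorem; appending $\pm I$ to a totally unimodular matrix preserves total unimodularity and the right-hand sides are integral; any $\{0,1\}$-vertex satisfying the edge constraints is the indicator of an independent set; and the independence number of an induced subgraph containing an odd $(2k+1)$-cycle restricted to that cycle is at most $k$, so the inequality $\sum_{v\in C} y_v \le k$ is valid for $\STAB(G)$ but violated by your $\bx$. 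The one cosmetic point worth tightening is the phrasing ``the only non-elementary input is the total unimodularity of the incidence matrix of a bipartite graph (equivalently, K\"onig's theorem)'': these are closely related but not literally the same statement; what you actually need is the Hoffman--Kruskal integrality theorem applied to a TU system, and the TU fact itself has an elementary two-coloring proof, so it would be cleaner to name that directly. Since the paper treats the theorem as black-box background, your proof adds content rather than duplicating an argument, which is a reasonable choice; citing \cite{SchA} as the paper does is also entirely acceptable.
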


Let $\vol(\c P)$ denote the volume of a polytope $\c P$. Recall from Section \ref{sec:summary-vol} the notation $i_V(G) = \vol(\STAB(G))$. So we have
\[
	i_V(G) = \vol(\STAB(G)) \leq \vol(\ESTAB(G)),
\]
with equality if $G$ is bipartite. Thus the inequality in Theorem \ref{thm:iV} follows from the following stronger statement, which is what we will prove.

\begin{proposition} \label{prop:estab-vol}
For any $N$-vertex, $d$-regular graph $G$,
\[
	\vol(\ESTAB(G)) \leq  \vol(\ESTAB(K_{d,d}))^{N/(2d)}.
\]
\end{proposition}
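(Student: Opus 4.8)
The plan is to compute $\vol(\ESTAB(G))$ explicitly as a product over the structure of $G$ and then reduce to the bipartite case via the same $G\sqcup G$ versus $G\x K_2$ comparison used throughout the paper. First I would observe that $\ESTAB(G)$ is a polytope inside the unit cube $[0,1]^V$ cut out by the constraints $x_u+x_v\le 1$ for $uv\in E(G)$; its volume is $\int_{[0,1]^V}\prod_{uv\in E(G)}\mathbf 1[x_u+x_v\le 1]\,d\bx$. The key multiplicativity fact is that $\ESTAB(G_1\sqcup G_2)=\ESTAB(G_1)\x\ESTAB(G_2)$ as a product of polytopes, so $\vol(\ESTAB(G\sqcup G))=\vol(\ESTAB(G))^2$. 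Thus it suffices to prove the ``strongly GT''-style inequality
\[
	\vol(\ESTAB(G\sqcup G)) \le \vol(\ESTAB(G\x K_2)),
\]
since then $\vol(\ESTAB(G))^2\le\vol(\ESTAB(G\x K_2))\le\vol(\ESTAB(K_{d,d}))^{N/d}$, where the last step uses the bipartite case: when $G\x K_2$ is bipartite, $\ESTAB=\STAB$, and the stable-set-polytope quantity $i_V$ on a $d$-regular bipartite graph obeys the Galvin--Tetali-type bound $i_V(G\x K_2)\le i_V(K_{d,d})^{N/d}$ (this should be obtainable from the results quoted earlier, or directly from the entropy/Shearer argument applied to volumes — in any case I would invoke it as the ``bipartite case''). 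Taking square roots then gives Proposition~\ref{prop:estab-vol}, and hence Theorem~\ref{thm:iV} with the value $\binom{2d}{d}^{-N/(2d)}$ coming from the direct computation $\vol(\ESTAB(K_{d,d}))=\vol(\STAB(K_{d,d}))=\binom{2d}{d}^{-1}$.

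For the volume comparison $\vol(\ESTAB(G\sqcup G))\le\vol(\ESTAB(G\x K_2))$, the natural approach mirrors the bipartite swapping trick, but now at the level of measure rather than counting. Writing points of $\RR^{V(G\sqcup G)}$ and $\RR^{V(G\x K_2)}$ both as pairs $(\bx,\by)\in[0,1]^V\x[0,1]^V$ (the two ``copies''), the indicator for $\ESTAB(G\sqcup G)$ is $\prod_{uv\in E}\mathbf 1[x_u+x_v\le1]\,\mathbf 1[y_u+y_v\le1]$, while for $\ESTAB(G\x K_2)$ it is $\prod_{uv\in E}\mathbf 1[x_u+y_v\le1]\,\mathbf 1[y_u+x_v\le1]$. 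The idea is to build a measure-preserving injection: given a point in $\ESTAB(G\sqcup G)$, swap the coordinates $x_w\leftrightarrow y_w$ on a suitable set $W\subseteq V(G)$ of vertices chosen to fix all ``violated'' edges — exactly as in Lemma~\ref{lem:swap}. One should check that every point of $\ESTAB(G\sqcup G)$ has the bipartite swapping property in this continuous setting: the relevant obstruction is that the set of edges $uv$ for which $\{x_u,y_u\}$ and $\{x_v,y_v\}$ are ``crossed'' must form a bipartite subgraph. This is where threshold graphs reenter: the constraint $x_u+x_v\le t$ defines, for each pair of values, a threshold graph, which has no alternating $4$-circuit by Lemma~\ref{lem:threshold-4c}, so Proposition~\ref{prop:4c-bst} applies pointwise and guarantees bipartiteness of the violated-edge set. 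Swapping on a fixed choice $W_F$ for each bipartite edge set $F$ (as in the proof of Proposition~\ref{prop:bst}) then gives a piecewise-coordinate-permutation map, which is volume-preserving and injective; since it lands in $\ESTAB(G\x K_2)$, the volume inequality follows.

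I expect the main obstacle to be making the ``pointwise threshold graph'' argument fully rigorous at the level of sets of positive measure rather than individual points: one must partition (up to measure zero) $[0,1]^V\x[0,1]^V$ into finitely many regions on which the combinatorial type (which edges are violated, hence which set $W$ to swap) is constant, verify this partition is well-defined and that the swap map is a bijection between the corresponding pieces of the two polytopes, and confirm that the boundary cases (equalities $x_u+x_v=1$, ties among coordinates) form a null set and can be ignored. A secondary point to nail down is precisely which earlier-quoted result supplies the bipartite volume bound $\vol(\ESTAB(K_{d,d}\text{-type bipartite graph}))\le\binom{2d}{d}^{-N/(2d)}$; if it is not literally an instance of Theorem~\ref{thm:GT} (since volumes are not homomorphism counts), I would instead give a short direct Shearer-entropy-style argument for $d$-regular bipartite graphs, or cite the corresponding statement for $i_V$. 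Everything else — multiplicativity of $\ESTAB$ under disjoint union, the explicit computation $\vol(\ESTAB(K_{d,d}))=\binom{2d}{d}^{-1}$, and the final square-root step — is routine.
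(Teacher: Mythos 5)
Your continuous swapping argument is plausible but has a genuine gap that you flag without closing: the bipartite volume bound. After reducing to $\vol(\ESTAB(G))^2 = \vol(\ESTAB(G\sqcup G)) \leq \vol(\ESTAB(G\times K_2))$, you need $\vol(\ESTAB(\tilde G)) \leq \vol(\ESTAB(K_{d,d}))^{\abs{V(\tilde G)}/(2d)}$ for $d$-regular bipartite $\tilde G$, and nothing quoted supplies it. Theorem~\ref{thm:GT} is about homomorphism counts, not volumes, and there is no ``corresponding statement for $i_V$'' in the paper. The only natural way to get it from the quoted material is to write $\vol(\ESTAB(\tilde G))$ as an Ehrhart-type limit $\lim_n i(\tilde G,n)n^{-\abs{V(\tilde G)}}$, apply Theorem~\ref{thm:GT} to $H_n$, and pass to the limit — but once you discretize, the entire continuous detour becomes unnecessary. (A smaller issue: Proposition~\ref{prop:4c-bst} is stated for finite $H$, so you cannot literally ``apply it pointwise'' to the threshold graph on $[0,1]$; you should restrict, for each fixed point, to the finite multiset of coordinate values and invoke $H_{A,1}$. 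And the measure-theoretic bookkeeping you defer — a finite partition by combinatorial type on which the swap is a coordinate permutation, plus a null-set argument for ties and boundary equalities — is real work, not routine.)

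The paper's proof is a two-line discretization that avoids both headaches. It invokes Theorem~\ref{thm:i}, the already-proved bound $i(G,n) \leq i(K_{d,d},n)^{N/(2d)}$ (which internally packages both the discrete swapping trick for $H_n$ and the bipartite Galvin--Tetali step), rescales both sides by the appropriate powers of $n$, and sends $n\to\infty$ using the relation $\vol(\ESTAB(G))=\lim_{n\to\infty} i(G,n)\,n^{-\abs{V(G)}}$ from \eqref{eq:ehrhart-vol}. In other words: discretize once at the start, prove the counting inequality (already done), and read off the volume inequality as a limit — rather than re-proving the swapping trick in a measure-theoretic setting and then separately needing a volume version of the bipartite bound. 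Your approach is salvageable, but only by importing exactly the Ehrhart step the paper uses, at which point the continuous swapping argument is redundant.
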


For a polytope $\c P$, let $n \c P$ denote the image of $\c P$ after a dilation at the origin by a factor $n$. So, 
\[
	n\ESTAB(G) = \set{ \bx \in \RR^V : 0 \leq x_v \leq n \; \forall v \in V(G), \; x_u + x_v \leq n \; \forall uv \in E(G)}.
\]
Since
\[
	\c I(G, n) = \set{ x : V(G) \to \set{0, 1, \dots, n} :  x(u) + x(v) \leq n \; \forall uv \in E(G)},
\]
lattice points in $n\ESTAB(G)$ correspond bijectively with $\c I(G, n)$. Hence
\[
	i(G, n) = \abs{\c I(G, n)} = \abs{(n \ESTAB(G)) \cap \ZZ^{V(G)}}.
\]
Regarded as a function in $n$, $i(G, n)$ is known as the Ehrhart polynomial of the polytope $\ESTAB(G)$. It is related to the volume of $\ESTAB(G)$ by
\begin{equation} \label{eq:ehrhart-vol}
	\vol(\ESTAB(G)) = \lim_{n \to \infty} i(G, n) n^{-\abs{V(G)}}.
\end{equation}


\begin{proof}[Proof of Proposition \ref{prop:estab-vol}]
From Theorem \ref{thm:i} we have
\[
	i(G, n) n^{-N} \leq \( i(K_{d,d}, n) n^{-2d} \)^{N/(2d)}.
\]
Letting $n \to \infty$ and using \eqref{eq:ehrhart-vol} gives
\[
	\vol(\ESTAB(G)) \leq \vol(\ESTAB(K_{d,d}))^{N/(2d)}. \qedhere
\]
\end{proof}

\begin{lemma} \label{lem:iV-K}
Let $a$ and $b$ be positive integers. Then
	$\displaystyle i_V(K_{a,b}) = \binom{a+b}{a}^{-1}$.
\end{lemma}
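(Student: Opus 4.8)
The plan is to turn this into a one-dimensional integral. Since $K_{a,b}$ is bipartite, Theorem~\ref{thm:stab=estab} gives $\STAB(K_{a,b}) = \ESTAB(K_{a,b})$, so $i_V(K_{a,b}) = \vol(\ESTAB(K_{a,b}))$ and it suffices to compute that volume. Using coordinates $(x_1,\dots,x_a,y_1,\dots,y_b)$ for the two sides of the bipartition, $\ESTAB(K_{a,b})$ is the set of points satisfying $0 \le x_i \le 1$, $0 \le y_j \le 1$, and $x_i + y_j \le 1$ for all $i,j$.

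First I would simplify the inequality system. The $ab$ edge constraints $x_i + y_j \le 1$ are together equivalent to the single constraint $\max_i x_i + \max_j y_j \le 1$: one direction follows by plugging in the maximizing indices, the other by monotonicity. Moreover, when $x_i, y_j \ge 0$, this constraint already forces $\max_i x_i \le 1$ and $\max_j y_j \le 1$, so the upper box constraints are redundant. Hence
\[
	\ESTAB(K_{a,b}) = \set{(x,y) \in \RR^a_{\ge 0} \times \RR^b_{\ge 0} : \max_i x_i + \max_j y_j \le 1}.
\]

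Next I would compute the volume by integrating out the $y$-coordinates first. For any function $g$ on $[0,1]$ one has $\int_{[0,1]^b} g(\max_j y_j)\, dy = \int_0^1 g(t)\, b t^{b-1}\, dt$, since the pushforward of Lebesgue measure on $[0,1]^b$ under $y \mapsto \max_j y_j$ is the measure on $[0,1]$ with cumulative distribution function $t^b$. Applying this with $g(t) = \vol\set{x \in \RR^a_{\ge 0} : \max_i x_i \le 1 - t} = (1-t)^a$ for $t \in [0,1]$ (the integrand being $0$ otherwise) gives
\[
	\vol(\ESTAB(K_{a,b})) = \int_0^1 b\, t^{b-1}(1-t)^a\, dt = b \cdot \frac{(b-1)!\,a!}{(a+b)!} = \frac{a!\,b!}{(a+b)!} = \binom{a+b}{a}^{-1},
\]
where $\int_0^1 t^{b-1}(1-t)^a\, dt = (b-1)!\,a!/(a+b)!$ is a standard Beta integral. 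This is the claimed identity. (Equivalently, one may integrate the joint density $a s^{a-1}\, b t^{b-1}$ of $(\max_i x_i, \max_j y_j)$ over the triangle $s + t \le 1$, $s,t \ge 0$, reaching the same value.)

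The argument is essentially routine; the only step that requires an observation is the collapse of the $ab$ edge inequalities to a single inequality on the two maxima, which is precisely what lets the volume factor through the one-dimensional Beta integral above. I do not anticipate any real obstacle.
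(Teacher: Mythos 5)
Your proof is correct but takes a genuinely different route from the paper's. The paper works directly with $\STAB(K_{a,b})$ and dissects it combinatorially: it cuts $\STAB(K_{a,b})$ (up to a measure-zero set) into the $a!\,b!$ pieces $T_{\pi,\sigma}$ determined by the order conditions $x_{\pi(1)} \le \cdots \le x_{\pi(a)}$, $y_{\sigma(1)} \le \cdots \le y_{\sigma(b)}$, checks that each such piece is a simplex spanned by the origin and the rows of the block matrix $\bigl(\begin{smallmatrix} U_a & 0 \\ 0 & U_b \end{smallmatrix}\bigr)$ of determinant $1$, and concludes each has volume $1/(a+b)!$. You instead pass to $\ESTAB(K_{a,b})$ via the bipartite equality $\STAB = \ESTAB$ (Theorem~\ref{thm:stab=estab}), collapse the $ab$ edge constraints to the single inequality $\max_i x_i + \max_j y_j \le 1$, and integrate out one block using the density $b\,t^{b-1}$ of the maximum of $b$ uniform coordinates, reducing the volume to the Beta integral $b\int_0^1 t^{b-1}(1-t)^a\,dt = a!\,b!/(a+b)!$. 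Both computations are valid and give the same answer. The paper's dissection is self-contained and fits the Ehrhart-polynomial theme of the section; your argument is shorter once the $\max$-collapse is observed, at the (small) cost of invoking Theorem~\ref{thm:stab=estab}, which the paper already uses in the deduction of Theorem~\ref{thm:iV} anyway.
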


\begin{proof}
Label the coordinates of $\RR^{V(K_{a,b})}$ by $(x_1, \dotsc, x_a, y_1, \dotsc, y_b)$, so that $\STAB(K_{a,b})$ is the convex hull of points of the form $(x_1, \dots, x_a, 0, \dots, 0)$ or $(0, \dots, 0, y_1, \dots, y_b)$, where $x_i, y_j \in \{0, 1\}$.

For each pair of permutations $(\pi, \sigma) \in S_a \x S_b$, consider the subset $T_{\pi, \sigma}$ of $\STAB(K_{a,b})$ lying in the region defined by
\[
	x_{\pi(1)} \leq x_{\pi(2)} \leq \cdots \leq x_{\pi(a)} \quad \text{and} \quad
	y_{\sigma(1)} \leq y_{\sigma(2)} \leq \cdots \leq y_{\sigma(a)}.
\]
Note that $\{ T_{\pi, \sigma} : (\pi, \sigma) \in  S_a \x S_b\}$ gives a dissection of $\STAB(K_{a,b})$. Indeed, excluding the measure-zero set of points with some two coordinates equal, the first $a$ coordinates and the last $b$ coordinates of every point can be ordered in a unique way, thereby obtaining a unique $\pi$ and $\sigma$.

By symmetry, all $T_{\pi, \sigma}$ are congruent, so we can consider the one where both $\pi$ and $\sigma$ are  identity permutations. We see that $T_{\pi, \sigma}$ is the simplex with one vertex at the origin, and the other vertices the rows of the matrix
\[
	\begin{pmatrix}
		U_a & 0 \\
		0 & U_b
	\end{pmatrix}
\]
where $U_n$ is the $n\x n$ upper-triangular matrix with 1's everywhere on or above the diagonal. The determinant of this matrix is $1$, so $\vol(T_{\pi, \sigma}) = 1/(a+b)!$, and this is true for all $(\pi, \sigma)$ due to symmetry. Since $\STAB(K_{a,b})$ is triangulated into $a!b!$ such simplices, we have $\vol(\STAB(K_{a,b})) = a!b!/(a+b)!$, as claimed.
\end{proof}

\begin{proof}[Proof of Theorem \ref{thm:iV}]
We have
\begin{align*}
	i_V(G) &= \vol(\STAB(G)) \leq \vol(\ESTAB(G))  \\
			&\leq \vol(\ESTAB(K_{d,d}))^{N/(2d)} = \vol(\STAB(K_{d,d}))^{N/(2d)} = i_V(K_{d,d}) 
			= \binom{2d}{d}^{-N/(2d)}. \hfill\qedhere
\end{align*}
\end{proof}

\begin{remark}
In the spirit of \cite[Thm.~4.3]{Kahn} and \cite[Prop.~1.10]{GT}, our proof can be modified to prove the following extension.
\end{remark}

\begin{proposition}
For any $(a,b)$-biregular, $N$-vertex, bipartite $G$, we have
\[
	i_V(G) \leq i_V\(K_{a,b}\)^{N/(a+b)} = \binom{a+b}{b}^{-N/(a+b)}.
\]
\end{proposition}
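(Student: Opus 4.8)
The plan is to reproduce, essentially verbatim, the chain of implications Theorem~\ref{thm:i} $\Rightarrow$ Proposition~\ref{prop:estab-vol} $\Rightarrow$ Theorem~\ref{thm:iV}, replacing the $d$-regular bipartite homomorphism bound by its $(a,b)$-biregular counterpart. Concretely, the one external input needed is the biregular analog of Theorem~\ref{thm:GT}: for every $(a,b)$-biregular bipartite graph $G$ on $N$ vertices and every target graph $H$ (loops allowed),
\[
	\hom(G, H) \leq \hom(K_{a,b}, H)^{N/(a+b)}.
\]
This is exactly the entropy estimate of Kahn and of Galvin--Tetali referenced in the remark; since $G$ is already bipartite, none of the swapping-trick machinery of Sections~\ref{sec:trick}--\ref{sec:target} is required.

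Granting this, I would first specialize to the threshold graph $H = H_n$ of Definition~\ref{def:H} (vertex set $\set{0,1,\dots,n}$, threshold $n$). Using the identification $\Hom(G, H_n) \cong \c I(G, n)$ already exploited in the proof of Theorem~\ref{thm:i}, the displayed inequality becomes
\[
	i(G, n) \leq i(K_{a,b}, n)^{N/(a+b)}.
\]
Dividing by $n^{N}$ (note $\abs{V(G)} = N$ and $\abs{V(K_{a,b})} = a+b$) gives $i(G,n)\,n^{-N} \leq \bigl(i(K_{a,b},n)\,n^{-(a+b)}\bigr)^{N/(a+b)}$. Letting $n \to \infty$ and applying the Ehrhart--volume relation \eqref{eq:ehrhart-vol} to both sides yields $\vol(\ESTAB(G)) \leq \vol(\ESTAB(K_{a,b}))^{N/(a+b)}$, the biregular analog of Proposition~\ref{prop:estab-vol}.

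To conclude, I would use that both $G$ and $K_{a,b}$ are bipartite, so Theorem~\ref{thm:stab=estab} gives $\STAB = \ESTAB$ for each of them. Hence
\[
	i_V(G) = \vol(\ESTAB(G)) \leq \vol(\ESTAB(K_{a,b}))^{N/(a+b)} = \vol(\STAB(K_{a,b}))^{N/(a+b)} = i_V(K_{a,b})^{N/(a+b)},
\]
and finally Lemma~\ref{lem:iV-K} evaluates $i_V(K_{a,b}) = \binom{a+b}{a}^{-1} = \binom{a+b}{b}^{-1}$, giving the stated bound.

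The only ingredient not already present in the paper is the biregular homomorphism inequality itself, so that is where the ``modification'' of the proof of Theorem~\ref{thm:iV} lives; everything downstream is formally identical to the regular case. Adapting the entropy argument is routine: the bipartition of $G$ into an $a$-regular side and a $b$-regular side lets one run the Shearer/entropy counting separately on the two sides, exactly as in \cite[Thm.~4.3]{Kahn} and \cite[Prop.~1.10]{GT}. I therefore expect no genuine obstacle beyond carefully stating and citing (or reproving) that estimate.
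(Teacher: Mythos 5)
Your proposal is correct and is precisely the modification the paper indicates: specialize the biregular entropy bound of \cite[Prop.~1.10]{GT} to $H = H_n$, pass to the Ehrhart limit via \eqref{eq:ehrhart-vol}, then use that $\STAB = \ESTAB$ holds for the bipartite graphs $G$ and $K_{a,b}$ and invoke Lemma~\ref{lem:iV-K}. One small remark: no new entropy argument actually needs adapting, since \cite[Prop.~1.10]{GT} already states the biregular inequality $\hom(G,H) \le \hom(K_{a,b},H)^{N/(a+b)}$ in full generality, so you can simply cite it rather than rederive it.
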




\section{Weighted generalizations} \label{sec:weighted}

In this section we discuss weighted generalizations of our results on graph homomorphisms. In applications in statistical mechanics and communication networks, these weights can be used to represent probabilities.

Assign to each vertex of $H$ a nonnegative real number \emph{weight} $\lambda_w$ (also known as the \emph{activity} or \emph{fugacity}). For any $f \in \Hom(G, H)$, the weight of $f$ is defined to be $w(f) = \prod_{v \in G} \lambda_{f(v)}$. Given a vector of weights $\Lambda =(\lambda_w : w \in V(H))$, let
\[
	\hom^\Lambda(G, H) = \sum_{f \in \Hom(G, H)} w(f).
\]
See \cite{BrWi} for the statistical mechanical motivation of this construction. When $\lambda_w = 1$ for all $w \in V(H)$, we have $\hom^\Lambda(G, H) = \hom(G, H)$. So the following result is a weighted generalization of Theorem \ref{thm:GT}.

\begin{theorem}[Galvin-Tetali \cite{GT}] \label{thm:wGT}
For any $N$-vertex, $d$-regular bipartite graph $G$, any $H$ (possibly with loops), and any vector of nonnegative weights $\Lambda$ on $V(H)$, we have
\begin{equation} \label{eq:wGT}
	\hom^\Lambda(G, H) \leq \(\hom^\Lambda(K_{d,d}, H)\)^{N/(2d)}.
\end{equation}
\end{theorem}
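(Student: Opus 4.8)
The plan is to deduce the weighted statement from the unweighted Theorem~\ref{thm:GT} by a blow-up construction, handling irrational weights at the end by a continuity argument. First assume every $\lambda_w$ is a nonnegative rational, and write $\lambda_w = c_w/M$ with a common denominator $M$ and nonnegative integers $c_w$. Form a graph $H'$ (allowing loops) by replacing each vertex $w$ of $H$ with an independent set $B_w$ of $c_w$ ``copies'', joining $B_w$ to $B_{w'}$ by a complete bipartite graph whenever $ww'\in E(H)$ with $w\neq w'$, and, for each looped vertex $w$ of $H$, turning $B_w$ into a clique with a loop at every copy. Then $H'$ has no parallel edges or loops, and I claim that for every graph $G$,
\[
	\hom(G,H') \;=\; \sum_{f\in\Hom(G,H)}\ \prod_{v\in V(G)} c_{f(v)} \;=\; M^{\abs{V(G)}}\,\hom^\Lambda(G,H).
\]
Indeed, a homomorphism $G\to H'$ is the same thing as a homomorphism $f:G\to H$ together with a choice of one copy in $B_{f(v)}$ for each $v\in V(G)$, and the edges of $H'$ were defined precisely so that \emph{every} such choice is a legitimate homomorphism into $H'$; the looped-clique blow-up is exactly what makes an edge of $G$ sent by $f$ to a single looped vertex $w$ still lift in all $c_w^{2}$ ways.

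Applying this identity to $G$ (with $\abs{V(G)}=N$) and to $K_{d,d}$ (with $2d$ vertices), and invoking Theorem~\ref{thm:GT} for the graph $H'$, we obtain
\[
	M^{N}\hom^\Lambda(G,H) \;=\; \hom(G,H') \;\le\; \hom(K_{d,d},H')^{N/(2d)} \;=\; \(M^{2d}\hom^\Lambda(K_{d,d},H)\)^{N/(2d)} \;=\; M^{N}\,\hom^\Lambda(K_{d,d},H)^{N/(2d)},
\]
and dividing by $M^{N}$ proves \eqref{eq:wGT} for rational weights. Since both sides of \eqref{eq:wGT} are polynomials in the entries of $\Lambda$ (hence continuous on $[0,\infty)^{V(H)}$) and the nonnegative rational vectors are dense there, the inequality persists for all nonnegative real weight vectors $\Lambda$.

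The only genuinely delicate point is making the combinatorial identity exact, i.e.\ getting the blow-up of loops right so that no lift is lost or spuriously created. A self-contained alternative that avoids this is to repeat Kahn's entropy argument directly: let $f\in\Hom(G,H)$ be random with $\Pr[f]\propto w(f)$, so that $\log\hom^\Lambda(G,H)=\mathbb H(f)+\sum_{v}\EE[\log\lambda_{f(v)}]$; split $V(G)$ into the two sides $\c O\sqcup\c E$ of the bipartition, bound $\mathbb H(f_{\c E})$ by Shearer's lemma applied to the cover of $\c E$ by the neighborhoods $\{N(u):u\in\c O\}$ (each vertex of $\c E$ lying in exactly $d$ of them, by $d$-regularity), bound $\mathbb H(f_{\c O}\mid f_{\c E})$ by $\sum_{u\in\c O}\mathbb H(f(u)\mid f_{N(u)})$, and distribute the weight terms so that each $u\in\c O$ carries $\EE[\log\lambda_{f(u)}]$ together with a $1/d$ share of $\EE[\log\lambda_{f(v)}]$ for each $v\in N(u)$. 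Maximizing the resulting per-vertex quantity over the joint distribution of the partial homomorphism on $N(u)\cup\{u\}$ — two applications of the Gibbs bound $\max_p\sum_i p_i\log(a_i/p_i)=\log\sum_i a_i$, first over the label of $u$ and then over the labels of $N(u)$ — yields exactly $\tfrac1d\log\hom^\Lambda(K_{d,d},H)$, with equality at the $w$-weighted uniform distribution on $\Hom(K_{d,d},H)$; summing over the $N/2$ vertices of $\c O$ gives \eqref{eq:wGT}. In this route the real work sits in the local optimization with the weights present, but it is routine.
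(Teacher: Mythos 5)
The paper does not actually prove Theorem~\ref{thm:wGT}; it quotes it from Galvin and Tetali~\cite{GT}, whose original argument is the entropy/Shearer computation that you sketch as the ``self-contained alternative.'' Your primary route---replacing each vertex $w$ of $H$ by $c_w$ copies (a looped clique if $w$ is looped, an independent set otherwise, with complete bipartite joins across edges of $H$), invoking the unweighted bipartite Theorem~\ref{thm:GT} for the blow-up $H'$, and closing up by continuity---is correct and is a genuinely different, more elementary derivation once Theorem~\ref{thm:GT} is granted. The key identity $\hom(G,H')=M^{\abs{V(G)}}\hom^\Lambda(G,H)$ does hold: an edge $uv\in E(G)$ with $f(u)\neq f(v)$ lifts in all $c_{f(u)}c_{f(v)}$ ways exactly when $f(u)f(v)\in E(H)$, and an edge with $f(u)=f(v)=w$ lifts in all $c_w^2$ ways (including when the two chosen copies coincide) exactly when $w$ is looped, which is precisely what the looped-clique blow-up arranges; vertices with $c_w=0$ simply disappear from $H'$ together with any homomorphisms through them. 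The continuity step is also sound, since both sides of \eqref{eq:wGT} are continuous on $[0,\infty)^{V(H)}$ (the right side is a positive power of a nonnegative polynomial) and nonnegative rational vectors are dense there. On the tradeoff: the entropy route is self-contained and is the one \cite{GT} actually run; your reduction is shorter given the unweighted theorem and turns the ``weights are multiplicities'' heuristic into an exact statement. One caution worth recording: because the blow-up changes the target from $H$ to $H'$, this argument does not by itself show that ``$H$ is GT $\Rightarrow$ $H$ is wGT'' for a fixed non-bipartite target; it works here only because the unweighted bipartite Theorem~\ref{thm:GT} holds for \emph{every} target, in particular for each $H'$.
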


We would like to know when Theorem~\ref{thm:wGT} can be extended to non-bipartite graphs as well.

\begin{definition} \label{def:wGT}
A graph $H$ (not necessarily simple) is \emph{wGT} if 
\begin{equation} \label{eq:wGT-def}
	\hom^\Lambda(G, H) \leq \hom^\Lambda(K_{d,d}, H)^{N/(2d)}
\end{equation}
holds for every $N$-vertex, $d$-regular graph $G$, and any vector of nonnegative weights $\Lambda$ on $V(H)$.
\end{definition}

\begin{definition} \label{def:swGT}
A graph $H$ (not necessarily simple) is \emph{strongly wGT} if
\begin{equation} \label{eq:swGT}
	\hom^\Lambda(G \sqcup G, H) \leq \hom^\Lambda(G \x K_2, H)
\end{equation}
for every graph $G$ (not necessarily regular), and any vector of nonnegative weights $\Lambda$ on $V(H)$.
\end{definition}

By setting unit weights, we see that wGT implies GT, and strongly wGT implies strongly GT.

\begin{lemma} \label{lem:swGT-wGT}
If $H$ is strongly wGT, then it is wGT.
\end{lemma}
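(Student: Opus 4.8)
This lemma is the weighted analogue of Lemma~\ref{lem:sGT-GT}, and the plan is to reproduce that proof essentially verbatim, replacing $\hom$ by $\hom^\Lambda$ throughout. First I would let $G$ be an arbitrary $N$-vertex, $d$-regular graph and fix a vector $\Lambda$ of nonnegative weights on $V(H)$. As in the unweighted case, the key structural facts are that $G \x K_2$ is a $2N$-vertex, $d$-regular \emph{bipartite} graph, so Theorem~\ref{thm:wGT} applies to it, and that weights are multiplicative over disjoint unions, giving $\hom^\Lambda(G \sqcup G, H) = \hom^\Lambda(G, H)^2$. This last identity holds because a homomorphism from $G \sqcup G$ is just a pair of homomorphisms from $G$, and the weight of the pair is the product of the two weights.

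Combining these with the hypothesis that $H$ is strongly wGT (i.e.\ inequality~\eqref{eq:swGT}), I would write the chain
\[
	\hom^\Lambda(G, H)^2 = \hom^\Lambda(G \sqcup G, H) \leq \hom^\Lambda(G \x K_2, H) \leq \hom^\Lambda(K_{d,d}, H)^{N/d},
\]
where the first inequality is~\eqref{eq:swGT} applied with the graph $G$, and the second is Theorem~\ref{thm:wGT} applied to the bipartite graph $G \x K_2$ (which has $2N$ vertices, so the exponent is $2N/(2d) = N/d$). Taking square roots yields $\hom^\Lambda(G, H) \leq \hom^\Lambda(K_{d,d}, H)^{N/(2d)}$, which is exactly~\eqref{eq:wGT-def}. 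Since $G$ and $\Lambda$ were arbitrary, $H$ is wGT.

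There is no real obstacle here; the only point that needs a sentence of justification is the multiplicativity $\hom^\Lambda(G \sqcup G, H) = \hom^\Lambda(G, H)^2$, and even that is immediate from the definition $w(f) = \prod_{v} \lambda_{f(v)}$. So the proof is just a few lines.

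\begin{proof}
Suppose $H$ is strongly wGT. Let $G$ be an $N$-vertex, $d$-regular graph and let $\Lambda$ be a vector of nonnegative weights on $V(H)$. Since a homomorphism from $G \sqcup G$ to $H$ is the same as an ordered pair of homomorphisms from $G$ to $H$, and the weight of the pair is the product of the weights, we have $\hom^\Lambda(G \sqcup G, H) = \hom^\Lambda(G, H)^2$. Note that $G \x K_2$ is a $2N$-vertex, $d$-regular bipartite graph, so Theorem~\ref{thm:wGT} applies to it. Then $H$ being strongly wGT implies
\[
	\hom^\Lambda(G, H)^2 = \hom^\Lambda(G \sqcup G, H) \leq \hom^\Lambda(G \x K_2, H) \leq \hom^\Lambda(K_{d,d}, H)^{N/d}.
\]
Therefore $\hom^\Lambda(G, H) \leq \hom^\Lambda(K_{d,d}, H)^{N/(2d)}$, and hence $H$ is wGT.
\end{proof}
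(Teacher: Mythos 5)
Your proof is correct and is exactly what the paper intends: the paper omits this proof, saying it is ``essentially the same as that of Lemma~\ref{lem:sGT-GT},'' and you have carried out that replacement faithfully, including the (mild but worth noting) multiplicativity check $\hom^\Lambda(G \sqcup G, H) = \hom^\Lambda(G, H)^2$.
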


The proof of the lemma is essentially the same as that of Lemma \ref{lem:sGT-GT}, so we omit it.

\begin{proposition} \label{prop:bst-swGT}
If $H$ is a bipartite swapping target, then $H$ is strongly wGT, and hence wGT.
\end{proposition}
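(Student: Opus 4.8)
The plan is to observe that the injection furnished by the bipartite swapping trick is \emph{weight-preserving}, so that the argument behind Corollary~\ref{cor:target-ineq} carries over verbatim to the weighted setting, and then to invoke Lemma~\ref{lem:swGT-wGT}.

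First I would recall the setup. Since $H$ is a bipartite swapping target, $\Hom^\bsp(G \sqcup G, H) = \Hom(G \sqcup G, H)$, so Proposition~\ref{prop:injection} (built on the bipartite swapping trick, Proposition~\ref{prop:bst}) supplies an injection $\phi \colon \Hom(G \sqcup G, H) \to \Hom(G \x K_2, H)$ of the form $f \mapsto \swap(\pair(f), W_{\viol(f)})$. The crux is to check that $\phi$ preserves the weight $w$. To this end, for an $H$-pair-labeling $p = (p_1, p_2)$ of $G$ define $w(p) = \prod_{v \in V(G)} \lambda_{p_1(v)} \lambda_{p_2(v)}$. Whenever $p = \pair(f)$ for $f \in \Hom(G \sqcup G, H)$ or $f \in \Hom(G \x K_2, H)$, we have $w(f) = \prod_{v \in V(G)} \lambda_{f(v_0)} \lambda_{f(v_1)} = w(p)$, since in both cases $p(v) = (f(v_0), f(v_1))$. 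Moreover, $\swap(p, W)$ merely interchanges $p_1(v)$ and $p_2(v)$ for $v \in W$, which leaves the product $\prod_v \lambda_{p_1(v)}\lambda_{p_2(v)}$ unchanged; hence $w(\swap(p, W)) = w(p)$ for every $W \subseteq V(G)$. Combining these facts gives $w(\phi(f)) = w(f)$ for all $f$.

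With a weight-preserving injection in hand, and using that all activities $\lambda_w$ are nonnegative, I would conclude
\[
    \hom^\Lambda(G \sqcup G, H) = \sum_{f \in \Hom(G \sqcup G, H)} w(f) = \sum_{f \in \Hom(G \sqcup G, H)} w(\phi(f)) \leq \sum_{g \in \Hom(G \x K_2, H)} w(g) = \hom^\Lambda(G \x K_2, H),
\]
where the inequality holds because $\phi$ is injective and the omitted summands $w(g)$ are $\geq 0$. This is exactly the statement that $H$ is strongly wGT, and then $H$ is wGT by Lemma~\ref{lem:swGT-wGT}.

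I do not expect a genuine obstacle here: the whole content is that swapping the two coordinates of a pair-label leaves the product of the two corresponding activities untouched, so the swapping trick is "weight-blind." The only step needing a little care is the bookkeeping in the second paragraph—verifying that the weight of a homomorphism, the weight of its pair-labeling, and the weight of any swapped pair-labeling all coincide—together with the routine but essential use of $\lambda_w \geq 0$ when passing from a sum over $\Hom(G \x K_2, H)$ to a sum over the image of $\phi$.
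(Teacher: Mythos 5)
Your proof is correct and follows the same route as the paper: both observe that the swapping operation is weight-preserving (since it only interchanges the two components of a pair-label, leaving the product $\lambda_{p_1(v)}\lambda_{p_2(v)}$ unchanged), so the injection from Proposition~\ref{prop:injection} preserves weights, which with $\lambda_w \geq 0$ gives $\hom^\Lambda(G \sqcup G, H) \leq \hom^\Lambda(G \times K_2, H)$. You spell out the bookkeeping in more detail than the paper, but the argument is the same.
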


\begin{proof}
Since swapping preserves weights, we know from Proposition~\ref{prop:injection} that there is a weight-preserving injection from $\Hom(G \sqcup G, H)$ to $\Hom(G \x K_2, H)$. This implies that $\hom^\Lambda(G \sqcup G, H) \leq \hom^\Lambda(G \x K_2, H)$, and hence $H$ is strongly wGT.
\end{proof}

We can now modify our chain of implication given in Section \ref{sec:summary-GT} as follows:
\begin{align*}
	&H \text{ is a threshold graph $H_{A,t}$ (Definition~\ref{def:H}, Theorem~\ref{thm:4c})}
\\	\overset{\text{Prop.~\ref{prop:4c-bst}}}{\Longrightarrow} \quad & H \text{ has the bipartite swapping target (Definition~\ref{def:target})}
\\	\overset{\text{Prop.~\ref{prop:bst-swGT}}}{\Longrightarrow} \quad & H \text{ is strongly wGT (Definition~\ref{def:swGT})}
\\	\overset{\text{Lem.~\ref{lem:swGT-wGT}}}{\Longrightarrow} \quad & H \text{ is wGT (Definition~\ref{def:wGT})}	
\end{align*}

Recall from Section~\ref{sec:4c} that the graph $H_{A,t}$ has no alternating 4-circuit.

\begin{corollary} \label{cor:H_n-wGT}
$H_{A,t}$ is wGT.
\end{corollary}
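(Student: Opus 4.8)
The plan is to assemble the chain of implications displayed just above the corollary, every link of which has already been established. First I would apply Lemma~\ref{lem:threshold-4c}: for any finite multiset $A$ of reals and any threshold $t$, the graph $H_{A,t}$ has no alternating 4-circuit. Second, Proposition~\ref{prop:4c-bst} promotes this to the statement that $H_{A,t}$ is a bipartite swapping target. Third, Proposition~\ref{prop:bst-swGT} says that every bipartite swapping target is strongly wGT. Finally, Lemma~\ref{lem:swGT-wGT}, the weighted analog of Lemma~\ref{lem:sGT-GT}, gives that strongly wGT implies wGT. Chaining these four steps together shows that $H_{A,t}$ is wGT, which is exactly the claim. (Alternatively, one could first invoke Theorem~\ref{thm:4c} to recall that threshold graphs are precisely the graphs with no alternating 4-circuit, but this detour is unnecessary.)

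Because all the substantive work has been done in the earlier sections, there is essentially no obstacle here. The only point worth keeping in mind — and the reason the weighted version costs nothing extra — is that the swapping operation of Definition~\ref{def:swap} is weight-preserving: replacing the pair of labels $(f(v_0),f(v_1))$ at a vertex $v$ by $(f(v_1),f(v_0))$ leaves the multiset of labels used by $f$ unchanged, hence leaves $w(f)=\prod_{v}\lambda_{f(v_0)}\lambda_{f(v_1)}$ unchanged. This is precisely what the proof of Proposition~\ref{prop:bst-swGT} exploits, so the bijection furnished by the bipartite swapping trick (Proposition~\ref{prop:bst}), restricted to $\Hom(G\sqcup G,H)=\Hom^{\bsp}(G\sqcup G,H)$ when $H$ is a bipartite swapping target, gives a weight-preserving injection into $\Hom(G\x K_2,H)$ and therefore the inequality $\hom^{\Lambda}(G\sqcup G,H)\le \hom^{\Lambda}(G\x K_2,H)$. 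Nothing new needs to be checked beyond what is already in the excerpt.
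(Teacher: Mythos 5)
Your proof is correct and matches the paper's intended argument exactly: the corollary is just the chain of implications displayed immediately before it, instantiated with Lemma~\ref{lem:threshold-4c} as the starting point, and your remark about the swap operation preserving $w(f)$ is precisely the observation underlying Proposition~\ref{prop:bst-swGT}. Nothing to add.
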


The fact that $H_1$ is wGT was proven in \cite{Zhao:indep}, in which Theorem \ref{thm:kahn-zhao} was proven in the following weighted form. 

\begin{theorem} \emph{\cite{Zhao:indep}} \label{thm:zhao}
For any $N$-vertex, $d$-regular graph $G$, and any $\lambda \geq 0$,
\[
	I(\lambda, G) \leq I(\lambda, K_{d,d})^{N/(2d)} = (2 (1 + \lambda)^d  - 1)^{N/(2d)},
\]
where $I(\lambda, G)$ is the stable set polynomial of $G$, given by
\[
	I(\lambda, G) = \sum_{I \in \c I(G)} \lambda^{\abs{I}}.
\]
\end{theorem}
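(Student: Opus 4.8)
The plan is to obtain Theorem~\ref{thm:zhao} as a direct consequence of Corollary~\ref{cor:H_n-wGT}. The first step is to recognize that the graph $H_1$ of Figure~\ref{fig:H1} --- a looped vertex $0$ and an unlooped vertex $1$ joined by an edge --- is exactly the threshold graph $H_{A,t}$ (Definition~\ref{def:H}) with $A = \set{0,1}$ and $t = 1$: the inequality $0+0 \leq 1$ produces the loop at $0$, $0+1 \leq 1$ produces the edge $01$, and $1+1 > 1$ forbids a loop at $1$. Hence Corollary~\ref{cor:H_n-wGT} applies and $H_1$ is wGT.

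The second step is to match the weighted homomorphism count with the stable set polynomial. Put the weight vector $\Lambda = (\lambda_0, \lambda_1) = (1, \lambda)$ on $V(H_1)$. A map $f : V(G) \to \set{0,1}$ lies in $\Hom(G, H_1)$ if and only if no edge of $G$ has both endpoints sent to $1$, i.e.\ if and only if $f^{-1}(1) \in \c I(G)$; this is the bijection $\Hom(G, H_1) \cong \c I(G)$ recalled in Section~\ref{sec:intro-indep}, sending $f$ to $f^{-1}(1)$. Under it, $w(f) = \prod_{v \in V(G)} \lambda_{f(v)} = \lambda^{\abs{f^{-1}(1)}}$, so $\hom^\Lambda(G, H_1) = \sum_{I \in \c I(G)} \lambda^{\abs{I}} = I(\lambda, G)$ for every $G$.

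The third step is the small computation $\hom^\Lambda(K_{d,d}, H_1) = I(\lambda, K_{d,d}) = 2(1+\lambda)^d - 1$: since every pair of vertices in opposite parts of $K_{d,d}$ is an edge, each independent set of $K_{d,d}$ sits inside one of the two parts, and summing $\lambda^{\abs{I}}$ over the subsets of each part while correcting for the empty set being counted twice gives $(1+\lambda)^d + (1+\lambda)^d - 1$. Combining the three steps with Definition~\ref{def:wGT}, applied to an $N$-vertex $d$-regular $G$, yields $I(\lambda, G) = \hom^\Lambda(G, H_1) \leq \hom^\Lambda(K_{d,d}, H_1)^{N/(2d)} = (2(1+\lambda)^d - 1)^{N/(2d)}$, which is the claim.

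Because the substantive work --- the bipartite swapping trick, the absence of alternating $4$-circuits in threshold graphs, and the fact that the swapping operation preserves weights --- is already in hand, I do not expect a genuine obstacle; the only points requiring care are the weight bookkeeping in the homomorphism/independent-set correspondence and the off-by-one when enumerating the independent sets of $K_{d,d}$. It is worth remarking that this theorem is in a sense the historical seed of the whole development (the case $H_1$ is precisely what \cite{Zhao:indep} established), so recovering it here is really a consistency check that the threshold-graph machinery specializes correctly.
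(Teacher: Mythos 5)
Your derivation is correct and is exactly the specialization the paper has in mind: immediately after stating Theorem~\ref{thm:zhao} (which it cites rather than re-proves), the paper records the identity $I(\lambda, G) = \hom^{(1,\lambda)}(G, H_1)$ and observes that Corollary~\ref{cor:H_n-wGT} generalizes Theorem~\ref{thm:zhao} --- you have simply carried out that specialization, including the (correct) computation $I(\lambda, K_{d,d}) = 2(1+\lambda)^d - 1$. The derivation is non-circular, since the chain leading to Corollary~\ref{cor:H_n-wGT} (Lemma~\ref{lem:threshold-4c} $\Rightarrow$ Proposition~\ref{prop:4c-bst} $\Rightarrow$ Proposition~\ref{prop:bst-swGT} $\Rightarrow$ Lemma~\ref{lem:swGT-wGT}, the last relying on the Galvin--Tetali Theorem~\ref{thm:wGT}) nowhere invokes Theorem~\ref{thm:zhao} or Theorem~\ref{thm:kahn-zhao}.
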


Note that $I(\lambda, G) = \hom^{(1, \lambda)}(G, H_1)$ and $\hom^{(\lambda_1, \lambda_2)}(G, H_1) = \lambda_1^N \hom^{(1, \lambda_2/\lambda_1)}(G, H_1) = \lambda_1^N I(\lambda_2/\lambda_1, G)$. Hence Corollary \ref{cor:H_n-wGT} is a generalization of Theorem \ref{thm:zhao}.

Here is an interpretation of the wGT property applied to the graph $H_{A,t}$.

\begin{theorem} \label{thm:states-weight}
Let $S$ be a finite set of ``states,'' with attributes $\alpha : S \to \RR$ and $\lambda : S \to \RR_{\geq 0}$. Let $t$ be a real constant. For any graph $G$, let
\[
	\sigma_S(G) = \sum_{f} \prod_{v \in G} \lambda_{f(v)}
\]
where the sum is taken over all $f: V(G) \to S$ satisfying: $\alpha(f(u)) + \alpha(f(v)) \leq t$ whenever $uv \in E(G)$. Then for any $N$-vertex, $d$ regular graph $G$,
\[
	\sigma_S(G) \leq \sigma_S(K_{d,d})^{N/(2d)}.
\]
\end{theorem}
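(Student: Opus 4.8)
The plan is to recognize $\sigma_S(G)$ as a weighted homomorphism count into a threshold graph and then invoke the wGT machinery already developed in Section~\ref{sec:weighted}. First I would build the target graph $H$: its vertex set is $S$ (with loops allowed), and we place an edge between $s,s'\in S$ (possibly $s=s'$) exactly when $\alpha(s)+\alpha(s')\leq t$. With this definition, a map $f:V(G)\to S$ contributes to $\sigma_S(G)$ precisely when $f\in\Hom(G,H)$, and the product $\prod_{v\in G}\lambda_{f(v)}$ is exactly the weight $w(f)$ associated to the weight vector $\Lambda=(\lambda_s:s\in S)$. Hence $\sigma_S(G)=\hom^\Lambda(G,H)$, and likewise $\sigma_S(K_{d,d})=\hom^\Lambda(K_{d,d},H)$.

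Next I would observe that $H$ is (isomorphic to) a threshold graph: taking $A$ to be the multiset $\{\alpha(s):s\in S\}$, the assignment $s\mapsto\alpha(s)$ gives an isomorphism $H\cong H_{A,t}$, where states sharing an attribute value simply become repeated elements of the multiset $A$. By Lemma~\ref{lem:threshold-4c}, $H_{A,t}$ has no alternating $4$-circuit (alternatively one checks this directly on $H$: an alternating $4$-circuit $a,b,c,d$ would force $\alpha(a)+\alpha(b)\le t$, $\alpha(c)+\alpha(d)\le t$, $\alpha(b)+\alpha(c)>t$, $\alpha(d)+\alpha(a)>t$, and summing gives $2t\ge\alpha(a)+\alpha(b)+\alpha(c)+\alpha(d)>2t$, a contradiction).

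Then I would run the chain of implications: since $H_{A,t}$ has no alternating $4$-circuit, Proposition~\ref{prop:4c-bst} shows it is a bipartite swapping target, Proposition~\ref{prop:bst-swGT} shows it is strongly wGT and hence wGT (this is precisely Corollary~\ref{cor:H_n-wGT}). Applying the defining inequality \eqref{eq:wGT-def} of wGT to our $G$ and our weight vector $\Lambda$ yields
\[
	\hom^\Lambda(G,H)\le\hom^\Lambda(K_{d,d},H)^{N/(2d)},
\]
and after the translation $\sigma_S(\cdot)=\hom^\Lambda(\cdot,H)$ from the first step this is exactly the asserted bound $\sigma_S(G)\le\sigma_S(K_{d,d})^{N/(2d)}$.

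There is no real obstacle here; the theorem is essentially a repackaging of Corollary~\ref{cor:H_n-wGT}. The only point needing a little care is the bookkeeping in the first step — verifying that the constraint ``$\alpha(f(u))+\alpha(f(v))\le t$ for $uv\in E(G)$'' corresponds exactly to $f$ being an $H$-homomorphism, including the case $f(u)=f(v)$ on an edge of $G$ (allowed precisely when $2\alpha(f(u))\le t$, which is exactly the condition for a loop at that vertex of $H$), and that distinct states with equal attribute value are correctly accommodated by passing to a multiset.
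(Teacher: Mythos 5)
Your proof is correct and follows essentially the same route as the paper's: identify $\sigma_S(G)$ with $\hom^\Lambda(G,H_{A,t})$ where $A=\{\alpha(s):s\in S\}$, and then invoke Corollary~\ref{cor:H_n-wGT} that $H_{A,t}$ is wGT. You simply spell out the bookkeeping and the chain of implications more explicitly than the paper does, but there is no substantive difference in method.
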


\begin{proof}
Observe that $\sigma_S(G) = \hom^\Lambda(G, H_{A,t})$, where $A$ is the multiset $\{\alpha(f(s)): s \in S\}$. Then the inequality is equivalent to $H_{A,t}$ being wGT, which is true by Corollary~\ref{cor:H_n-wGT}.
\end{proof}

Finally we give a weighted generalization of our result on the stable set polytope.

\begin{theorem}
For any $N$-vertex, $d$-regular graph $G$, and any Riemann-integrable function $\tau : [0, 1] \to [0, \infty)$, we have
\begin{equation} \label{eq:estab-integral}
	\int\limits_{\ESTAB(G)} \prod_{v \in V(G)} \tau(x_v) \ d \bx
	\leq
	\( \int\limits_{\ESTAB(K_{d,d})} \prod_{v \in V(K_{d,d})} \tau(x_v) \ d \bx \)^{N/(2d)}.
\end{equation}
\end{theorem}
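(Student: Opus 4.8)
The plan is to mimic the Ehrhart-to-volume passage used in the proof of Proposition~\ref{prop:estab-vol}, but now with the integrand $\prod_v \tau(x_v)$ in place of the constant $1$. The Riemann-integrability of $\tau$ lets us approximate the integral by a weighted lattice-point count, and the weighted lattice-point count is controlled by the weighted homomorphism inequality (Corollary~\ref{cor:H_n-wGT}). First I would reduce to the case where $\tau$ is a nonnegative step function that is constant on each interval $[(k-1)/n, k/n)$ for $k = 1, \dots, n$; since $\tau$ is Riemann-integrable and nonnegative, we can sandwich it between such step functions whose integrals converge to $\int_0^1 \tau$, and the integrand $\prod_v \tau(x_v)$ depends continuously (in fact multilinearly in each factor's sup/inf) on these approximants, so it suffices to prove \eqref{eq:estab-integral} for step functions and then pass to the limit on both sides.

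For a step function with values $\lambda_0, \lambda_1, \dots, \lambda_n \geq 0$ on the successive subintervals (here I take $n+1$ values, indexing the ``state'' $k$ by the lattice point $k \in \{0, 1, \dots, n\}$), I would relate the integral over $\ESTAB(G)$ to a Riemann sum over the lattice $\frac1n \ZZ^{V(G)}$. Precisely, $\int_{\ESTAB(G)} \prod_v \tau(x_v)\, d\bx$ is the limit as $n \to \infty$ of $n^{-\abs{V(G)}} \sum \prod_v \lambda_{x(v)}$, where the sum ranges over $x : V(G) \to \{0,1,\dots,n\}$ with $x(u) + x(v) \le n$ for every $uv \in E(G)$ — that is, over $\c I(G,n)$. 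But that weighted sum is exactly $\hom^\Lambda(G, H_n)$ with the weight vector $\Lambda = (\lambda_0, \dots, \lambda_n)$, using the identification $\Hom(G, H_n) \cong \c I(G,n)$ from the proof of Theorem~\ref{thm:i} and the fact that $H_n = H_{A,t}$ with $A = \{0,1,\dots,n\}$, $t = n$. So
\[
	\int\limits_{\ESTAB(G)} \prod_{v} \tau(x_v)\, d\bx = \lim_{n \to \infty} n^{-N} \hom^\Lambda(G, H_n),
\]
and similarly for $K_{d,d}$ in place of $G$ (with $2d$ vertices). Now Corollary~\ref{cor:H_n-wGT} tells us $H_n$ is wGT, so $\hom^\Lambda(G, H_n) \le \hom^\Lambda(K_{d,d}, H_n)^{N/(2d)}$, i.e.
\[
	n^{-N}\hom^\Lambda(G, H_n) \le \paren{ n^{-2d} \hom^\Lambda(K_{d,d}, H_n) }^{N/(2d)}.
\]
Letting $n \to \infty$ and using the two limit identities gives \eqref{eq:estab-integral} for step functions, and then the approximation argument of the first paragraph finishes the general Riemann-integrable case.

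The main obstacle I anticipate is purely analytic bookkeeping rather than conceptual: justifying that the weighted Riemann sums $n^{-N}\hom^\Lambda(G,H_n)$ actually converge to the claimed integral, and that the step-function approximation controls the product integrand on both sides simultaneously. The subtlety is that the ``mesh'' $n$ plays a double role — it is both the Ehrhart dilation parameter for $\ESTAB$ and the refinement parameter of the step function — so one must choose the step function first (giving fixed values $\lambda_0,\dots,\lambda_m$ on $m+1$ pieces) and then let the Ehrhart parameter $n$ run through multiples of $m$, so that each sub-box of the step function contains a clean block of lattice points; the Riemann sum then genuinely converges to $\int \prod_v \tau(x_v)$ because $\tau$, being Riemann-integrable, has a negligible set of discontinuities and the finite product of bounded functions inherits this. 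None of the individual estimates is hard, but they need to be assembled carefully to keep the inequality intact through the limit.
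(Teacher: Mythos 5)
Your proposal is correct and reaches the paper's result by essentially the same pipeline (identify $\hom^\Lambda(G,H_n)$ with a weighted lattice-point count over $\ESTAB(G)$, apply the wGT property of $H_n$, and pass to the limit), but you insert an extra preliminary reduction to step functions that the paper skips. The paper handles the ``double role of $n$'' you worry about by simply letting the weight vector depend on $n$: set $\Lambda_n$ with $\lambda_i = \tau(i/n)$ for $i \in \{0,\dots,n\}$. Then $n^{-\abs{V(G)}}\hom^{\Lambda_n}(G,H_n) = n^{-\abs{V(G)}}\sum_{\bx \in \ESTAB(G)\cap \frac1n\ZZ^{V(G)}} \prod_v \tau(x_v)$ is a \emph{genuine} Riemann sum for $\int_{\ESTAB(G)}\prod_v\tau(x_v)\,d\bx$, and the Riemann integrability of $\tau$ (hence of the product integrand over the bounded polytope) gives convergence directly as $n\to\infty$ --- no sandwiching between step functions and no need to align the mesh with the pieces. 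Your version does work once you carry out the telescoping estimate $\abs{\prod_v\Sigma(x_v)-\prod_v\sigma(x_v)}\le \abs{V}\,M^{\abs{V}-1}\cdot\sup\abs{\Sigma-\sigma}$ to push the step-function inequality through the limit, but it is an extra layer of bookkeeping that buys nothing: the weight vector in the wGT inequality is allowed to be arbitrary and to vary with $n$, so there is no reason to freeze it first.
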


\begin{proof}
Define a vector of weights $\Lambda_n$ on $H_n$ by $\lambda_i = \tau(i/n)$ for $i \in V(H_n) = \{0, 1, \dots, n\}$. We have
\begin{multline} \label{eq:estab-riem}
	\hom^{\Lambda_n}(G, H_n) = \sum_{f\in \Hom(G, H_n)} w(f) 
	\\= \sum_{f\in \Hom(G, H_n)} \prod_{v \in V(G)} \tau\(\frac{f(v)}{n} \)							 
							 = \sum_{\bx \in \ESTAB(G) \cap \frac{1}{n}\ZZ^{V(G)}} \(\prod_{v \in V(G)} \tau (x_v)\),
\end{multline}
where the last step uses the bijective correspondence $\Hom(G, H_n) \cong \c I(G, n) \cong (n\ESTAB(G)) \cap \ZZ^{V(G)}$. By Riemann sum approximation, 
\begin{equation} \label{eq:estab-riem2}
	\lim_{n\to \infty} n^{-\abs{V(G)}} \hom^{\Lambda_n}(G, H_n) = \int\limits_{\ESTAB(G)} \prod_{v \in V(G)} \tau(x_v) \ d \bx.
\end{equation}
Since $H_n$ is wGT, we may apply \eqref{eq:wGT-def} to $H_n$ and $\Lambda_n$ to obtain
\begin{equation} \label{eq:estab-riem3}
	n^{-N} \hom^{\Lambda_n}(G, H_n) \leq  \(n^{-2d} \hom^\Lambda(K_{d,d}, H)\)^{N/(2d)}.
\end{equation}
Letting $n \to \infty$ in \eqref{eq:estab-riem3} and applying \eqref{eq:estab-riem2} yields the result.
\end{proof}

Using Theorem \ref{thm:stab=estab} we obtain the following result about the stable set polytope.

\begin{corollary}
For any $N$-vertex, $d$-regular graph $G$, and any Riemann-integrable function $\tau : [0, 1] \to [0, \infty)$, we have
\begin{equation}
	\int\limits_{\STAB(G)} \prod_{v \in V(G)} \tau(x_v) \ d \bx
	\leq
	\( \int\limits_{\STAB(K_{d,d})} \prod_{v \in V(K_{d,d})} \tau(x_v) \ d \bx \)^{N/(2d)}.
\end{equation}
\end{corollary}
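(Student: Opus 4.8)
The plan is to obtain this corollary as an immediate consequence of the preceding theorem, in exactly the way Theorem~\ref{thm:iV} was deduced from Proposition~\ref{prop:estab-vol}. The key point is that the integrand $\prod_{v \in V(G)} \tau(x_v)$ is everywhere nonnegative, since $\tau$ takes values in $[0,\infty)$, so integrating it over a region is monotone under enlarging that region.

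First I would invoke Theorem~\ref{thm:stab=estab}, which gives $\STAB(G) \subseteq \ESTAB(G)$ for every $G$, and conclude from the nonnegativity of the integrand that $\int_{\STAB(G)} \prod_{v} \tau(x_v)\, d\bx \le \int_{\ESTAB(G)} \prod_{v} \tau(x_v)\, d\bx$. Next I would apply inequality~\eqref{eq:estab-integral} to the right-hand side to bound it by $\bigl(\int_{\ESTAB(K_{d,d})} \prod_{v} \tau(x_v)\, d\bx\bigr)^{N/(2d)}$. Finally, since $K_{d,d}$ is bipartite, the equality clause of Theorem~\ref{thm:stab=estab} gives $\STAB(K_{d,d}) = \ESTAB(K_{d,d})$, so this upper bound equals $\bigl(\int_{\STAB(K_{d,d})} \prod_{v} \tau(x_v)\, d\bx\bigr)^{N/(2d)}$; chaining the three displays yields the claimed inequality.

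I do not expect any genuine obstacle: this is a one-line deduction once \eqref{eq:estab-integral} is available, and every analytic subtlety (finiteness of the integrals, Riemann integrability of $\bx \mapsto \prod_v \tau(x_v)$ on a polytope) has already been handled in the proof of the preceding theorem; the only reason the first step is an inequality rather than an equality is that $\ESTAB(G)\setminus\STAB(G)$ can have positive volume when $G$ is not bipartite. If one wanted a proof that does not cite \eqref{eq:estab-integral}, one could instead replay its argument: put weights $\lambda_i = \tau(i/n)$ on $H_n$, use $\Hom(G, H_n) \cong \c I(G,n) \cong (n\,\ESTAB(G)) \cap \ZZ^{V(G)}$ so that $n^{-\abs{V(G)}}\hom^{\Lambda_n}(G, H_n)$ is a Riemann sum for $\int_{\ESTAB(G)} \prod_v \tau(x_v)\, d\bx$, invoke Corollary~\ref{cor:H_n-wGT} that $H_n$ is wGT, and let $n\to\infty$ --- but the two-line deduction above is cleaner.
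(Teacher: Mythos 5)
Your proposal is correct and matches the paper's (implicit) argument exactly: the paper states the corollary follows from Theorem~\ref{thm:stab=estab} applied to the preceding theorem, which is precisely the three-step chain you give---$\STAB(G)\subseteq\ESTAB(G)$ plus nonnegativity of the integrand, then \eqref{eq:estab-integral}, then $\STAB(K_{d,d})=\ESTAB(K_{d,d})$ by bipartiteness.
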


Setting $\tau = 1$ yields Theorem \ref{thm:iV} as a special case.

\subsection*{Acknowledgments}
The author would like to thank Michel Goemans for his mentorship throughout this project. The author would also like to thank David Galvin for initially suggesting the problem. This research was partially supported by the MIT Undergraduate Research Opportunities Program.

\bibliographystyle{amsplain}
\bibliography{references}

\end{document}